\newcommand{\op}[1]{\operatorname{#1 }}
\newcommand{\R}{\mathbb R}
\newcommand{\N}{\mathbb N}
\renewcommand{\P}[2]{\mathbb{P}_{#2}\left( #1 \right)}
\def\build#1_#2^#3{\mathrel{
\mathop{\kern 0pt#1}\limits_{#2}^{#3}}}
\newtheorem{theorem}{Theorem}
\newtheorem{proposition}[theorem]{Proposition}
\newtheorem{lemma}[theorem]{Lemma}
\newtheorem{corollary}[theorem]{Corollary}
\newtheorem*{conjecture}{Conjecture}
\newtheorem*{open}{Open Question}
\renewcommand{\L}{\mathbb{L}}
\renewcommand{\P}[1]{\mathbb{P}\left[#1\right]}
\theoremstyle{definition}
\newtheorem*{remark}{Remark}
\newcommand\Es[1]{\mathbb{E}\left[#1\right]}
\def \N {\mathbb N}
\def \R {\mathbb R}
\def \P {\mathbb{P}}
\def \dTV  {\mathrm{d_{TV}}}
\def \dGH  {\mathrm{\mathrm{d_{GH}}}}
\def \rab {\mathsf{Glu}}
\newcommand{\pol}{\mathrm{P\'ol}}
\def \bt {\boldsymbol { \tau}}
\def \btp {\boldsymbol { \tau'}}
\def \bphi {\boldsymbol { \phi}}
\def \bpsi {\boldsymbol { \psi}}
\def\llbracket{[\hspace{-.10em} [ }
\def\rrbracket{ ] \hspace{-.10em}]}
\title{  \vspace {-2cm}\textbf{Scaling limits and influence of the seed graph in preferential attachment trees}}
\date{}
\author{Nicolas Curien\thanks{CNRS and LPMA, Université Pierre et Marie Curie (Paris 6). \hfill  \texttt{nicolas.curien@gmail.com}} 
\quad Thomas Duquesne\thanks{LPMA, Université Pierre et Marie Curie (Paris 6).\hfill \texttt{thomas.duquesne@upmc.fr}} 
\quad Igor Kortchemski\thanks{DMA, École Normale Supérieure.\hfill  \texttt{igor.kortchemski@normalesup.org}} 
\quad and Ioan Manolescu\thanks{Département de Mathématiques, Université de Genève.\hfill  \texttt{ioan.manolescu@unige.ch}
}}
\DeclareSymbolFont{extraup}{U}{zavm}{m}{n}
\DeclareMathSymbol{\varheart}{\mathalpha}{extraup}{86}
\DeclareMathSymbol{\vardiamond}{\mathalpha}{extraup}{87}
\renewcommand*{\@fnsymbol}[1]{\ensuremath{\ifcase#1\or  \spadesuit \or \varheart\or \vardiamond \or \clubsuit \or
   \mathsection\or \mathparagraph\or \|\or **\or \dagger\dagger
   \or \ddagger\ddagger \else\@ctrerr\fi}}
\newcommand{\PP}{\mathbb P}
\begin{document}

\maketitle

\let\thefootnote\relax\footnotetext{ \\\emph{MSC2010 subject classiﬁcations}. Primary 05C80, 60J80; secondary 05C05, 60G42. \\
 \emph{Keywords and phrases.} Preferential attachment model, Brownian tree, Looptree, Poisson boundary.}
 
\vspace {-0.5cm}

\begin{abstract} 

\medskip

We are interested in the asymptotics of random trees built by linear preferential attachment, 
also known in the literature as Barab\'asi--Albert trees or plane-oriented recursive trees. 
We first prove a conjecture of Bubeck, Mossel \& R\'acz \cite{BMR14v3} 
concerning the influence of the seed graph on the asymptotic behavior of such trees. 
Separately we study the geometric structure of nodes of large degrees 
in a plane version of Barab\'asi--Albert trees \emph{via} their associated looptrees. 
As the number of nodes grows, we show that these looptrees, appropriately rescaled, 
converge in the Gromov--Hausdorff sense towards a random compact metric space which we call the Brownian looptree. 
The latter is constructed as a quotient space of Aldous' Brownian Continuum Random Tree 
and is shown to have almost sure Hausdorff dimension $2$. 
\end{abstract}

 \begin{figure}[!h]
 \begin{center}
  \includegraphics[width=0.45 \linewidth]{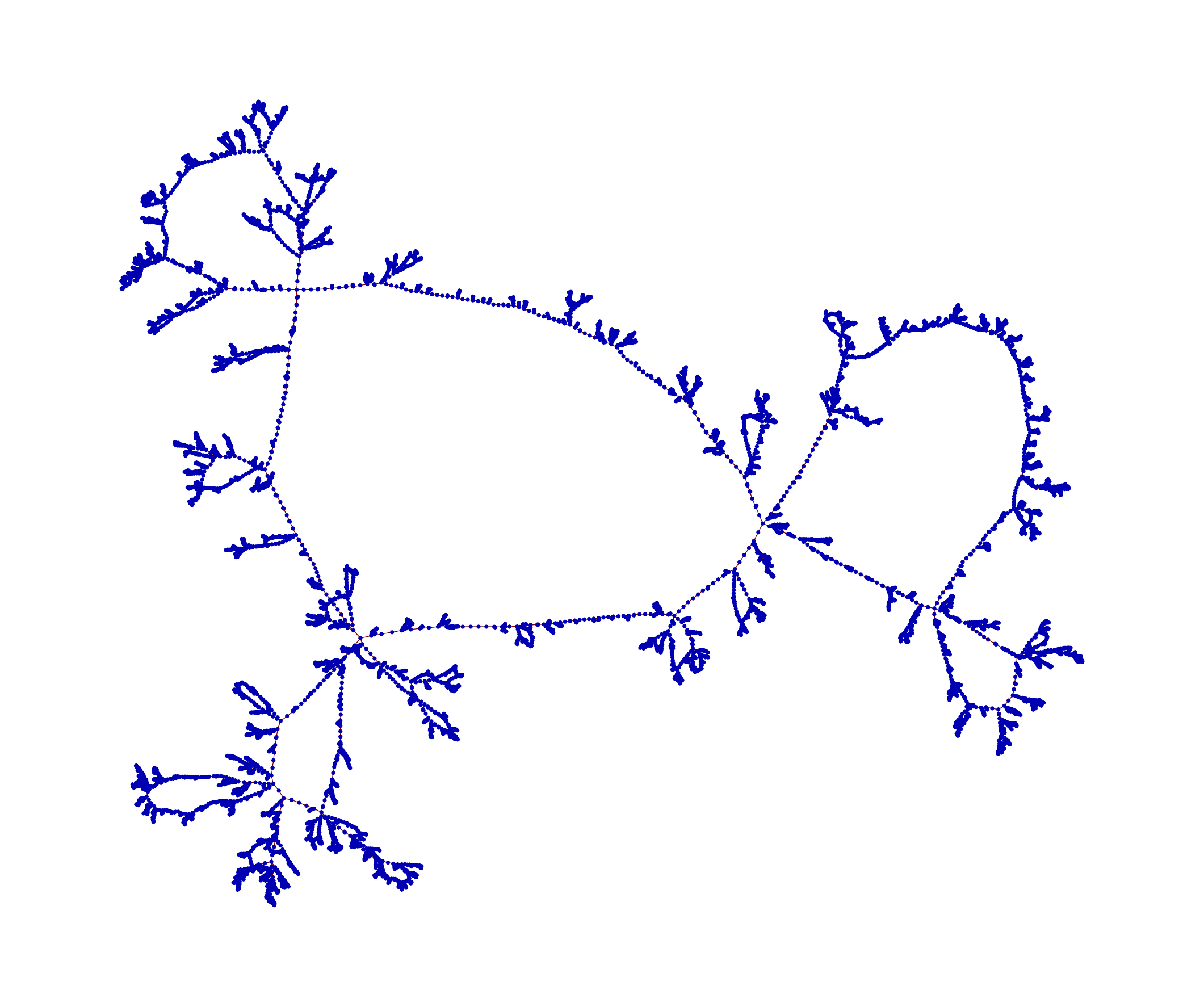}
 \caption{ \label{fig:largediss} The looptree associated with a large plane Barab\'asi--Albert tree.}
 \end{center}
 \end{figure}
 
 \vfill \pagebreak
\section{Introduction}

Random graphs constructed recursively by preferential attachment rules have attracted a lot of attention in the last decade. They are sensible models for many real-world networks, and have the remarkable scale-free property, meaning that their degree distribution exhibits a power law behavior. The literature on the subject is extremely vast, and we refer to \cite{Rem13} for an overview and references. 

In this work, we focus on the simplest and the best known of these models, the linear preferential attachment model (LPAM in short). Starting with a finite tree $T_{1}$ (i.e.~a finite connected graph without cycles, considered up to graph isomorphisms), one constructs recursively a sequence of random trees $T_{1},T_{2}, \ldots $ by requiring that for $i \geq 1$, the tree $T_{i+1}$ is obtained from the tree $T_{i}$ by joining with an edge a new vertex with a random vertex of $T_{i}$, chosen proportionally to its degree. These trees are also known in the literature as plane-oriented recursive trees. This model was introduced by Szym\'anski \cite{Szy87}, and generalized and popularized by Albert \& Barab\'asi \cite{AB99} and Bollob\'as, Riordan, Spencer \& Tusn\'ady \cite{BRST01}. 

This work concerns two related aspects of the LPAM. 
First we investigate the influence of the initial tree 
(also called the seed) on the behavior of $T_{n}$ as $n \rightarrow \infty$.
Next we study the graph structure of $T_{n}$ as $n \rightarrow \infty$ by studying its associated looptree (see below for the definition of a looptree associated with a tree).

\paragraph{Influence of the seed graph.}  Bubeck, Mossel and R\'acz \cite{BMR14v3} recently raised the question of the influence of the initial tree 
on the large time behavior of the LPAM. More precisely, given a tree $S$ with $|S| = n_0 \geq 2$ vertices, consider the sequence of trees $  ({T}^{(S)}_{n})_{n \geq n_0}$ constructed by using the previously mentioned preferential attachment rule and starting with $ {T}_{n_0} =S$. The tree $S$ is called the \emph{seed graph}. Informally, the question is whether the seed graph can be determined from the law of $T_n$ for large values of $n$. Following \cite{BMR14v3}, for finite trees $S_{1}$ and $S_{2}$, set
$$ d(S_{1},S_{2}) = \lim_{n\to \infty} \mathrm{d_{TV}}(  {T}_{n}^{(S_{1})},  {T}_{n}^{(S_{2})}),$$
where $\dTV$ denotes the total variation distance for random variables taking values in the space of finite trees. 
Bubeck, Mossel and R\'acz  \cite{BMR14v3} have observed that $d$ is a pseudo-metric and have conjectured that $ d$ is a metric in non-trivial cases. We confirm this conjecture:
 
\begin{theorem}\label{thm:metric}
The function $d$ is a metric on 
trees with at least $3$ vertices.
\end{theorem}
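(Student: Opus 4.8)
The plan is to show that $d(S_1, S_2) > 0$ whenever $S_1$ and $S_2$ are non-isomorphic trees each having at least $3$ vertices. Since $d$ is already known to be a pseudo-metric, the only thing to verify is the separation property: if $d(S_1, S_2) = 0$ then $S_1 \cong S_2$. Equivalently, I would prove the contrapositive, exhibiting for each pair of distinct seeds a statistic of $T_n$ whose limiting distribution differs between the two seed graphs, which forces the total variation distance to stay bounded away from zero.

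\medskip

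First I would search for a \emph{seed-sensitive statistic}: a functional $\Phi(T_n)$ whose normalized value converges in distribution, and whose limit law genuinely depends on $S$. The natural candidates come from the preferential attachment dynamics themselves. For a fixed vertex $v$ of the seed, its degree $\deg_{T_n}(v)$ grows like $n^{1/2}$ up to a random multiplicative constant (a P\'olya urn / martingale computation); more generally, the collection of rescaled subtree sizes hanging off the seed, or the rescaled degrees of the seed vertices, converges jointly to a Dirichlet-type limit whose parameters are read off from the degree sequence of $S$. The key point is that the limiting random vector carries strictly more information than a single pseudo-metric would allow one to erase: its law determines, for instance, the degree sequence of the seed and finer adjacency data. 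I would make this precise by writing the evolution of the relevant quantities as a generalized P\'olya urn and invoking the standard almost-sure limit theorems for such urns to extract a limit whose distribution is an explicit function of $S$.

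\medskip

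The main technical step, and the place where I expect the real difficulty, is upgrading "the limit law depends on $S$" into a genuine lower bound on total variation distance that survives the $n \to \infty$ limit. Convergence in distribution of a statistic does not by itself prevent $\dTV(T_n^{(S_1)}, T_n^{(S_2)})$ from tending to $0$; one must control the statistic at finite $n$ and use the contraction property $\dTV(f(X), f(Y)) \le \dTV(X,Y)$ for any measurable map $f$. Thus if $\Phi$ is a fixed measurable statistic and $\Phi(T_n^{(S_1)}) \Rightarrow L_{S_1}$, $\Phi(T_n^{(S_2)}) \Rightarrow L_{S_2}$ with $L_{S_1} \ne L_{S_2}$ in distribution, then
\begin{equation*}
\liminf_{n \to \infty} \dTV\bigl(T_n^{(S_1)}, T_n^{(S_2)}\bigr) \;\ge\; \liminf_{n\to\infty} \dTV\bigl(\Phi(T_n^{(S_1)}), \Phi(T_n^{(S_2)})\bigr) \;\ge\; \dTV(L_{S_1}, L_{S_2}) \;>\; 0,
\end{equation*}
where the last strict inequality holds because distinct probability measures have positive total variation distance and total variation is lower semicontinuous under weak convergence. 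This reduces the whole theorem to constructing, for every pair of non-isomorphic seeds with $\ge 3$ vertices, a single such $\Phi$ separating their limit laws.

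\medskip

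The hard part is therefore the \emph{combinatorial completeness} of the statistics: I must guarantee that no two distinct seeds of size $\ge 3$ yield the same limiting law for every natural statistic. The restriction to $\ge 3$ vertices is essential — the two trees on $2$ vertices are isomorphic and on smaller configurations $d$ degenerates, which is why the theorem excludes them. I would handle this by choosing $\Phi$ to be the joint limit of the suitably rescaled degrees of a labeling of the seed vertices together with enough adjacency information to reconstruct $S$ up to isomorphism; the limiting Dirichlet/Gamma parameters then encode the full degree sequence, and a refinement tracking how subtrees merge recovers the seed's isomorphism class. Establishing that this map from seeds to limit laws is injective is the crux, and I expect it to require a careful induction on $|S|$ or an explicit identification of the limit via exchangeability and the structure of the associated P\'olya urn, rather than any single slick computation.
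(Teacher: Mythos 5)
Your reduction step is sound: total variation distance is indeed lower semicontinuous under weak convergence (on a Polish space it is a supremum of weakly continuous functionals, by regularity), so if you could produce a single measurable statistic $\Phi$ of the \emph{observed tree} whose limit laws under the two seeds differ, the theorem would follow. The gap is in the statistic itself. The quantities you propose --- rescaled degrees of the seed vertices, sizes of subtrees hanging off the seed, ``a labeling of the seed vertices together with enough adjacency information'' --- are not functions of $T_n^{(S)}$, because $T_n^{(S)}$ is only given as an abstract tree up to isomorphism: the seed vertices are not marked in it and cannot in general be recovered from it. The contraction inequality $\dTV(\Phi(X),\Phi(Y))\le\dTV(X,Y)$ requires $\Phi$ to be defined on the common observation space, i.e.\ to be an isomorphism-invariant functional. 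Finding such invariant observables is precisely the obstruction that makes the problem nontrivial (it is why Bubeck--Mossel--R\'acz could only separate seeds with distinct degree sequences), and your plan assumes it away. On top of this, you explicitly defer the injectivity of the map from seeds to limit laws, which you yourself identify as the crux; as written, the proposal therefore does not contain a proof.

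The paper's route avoids both difficulties. It works with the invariant observables $D_{\bt}(T_n)=\sum_{\phi}\prod_{u\in\tau}[\deg_{T_n}\phi(u)]_{\ell(u)}$, summing over all embeddings $\phi:\tau\to T_n$ of a decorated tree, so no identification of seed vertices is ever needed. It then never identifies limit laws at all: choosing $\bt=S_2$ decorated by its own degrees makes $D_{\bt}(T)$ proportional to $\ind_{\{T=S_2\}}$ on trees of size $|S_2|$, which separates the two seeds \emph{in expectation at a single finite time} $n_0$ (this is where $|S|\ge 3$ enters, since a size-$2$ seed evolves deterministically for one step). A triangular-system construction turns the $D_{\btp}$, $\btp\preccurlyeq\bt$, into martingales bounded in $\L^2$, and the quantitative bound
\begin{equation*}
\dTV\bigl(T_n^{(S_1)},T_n^{(S_2)}\bigr)\;\ge\;\frac{\bigl(\E{M_1(n)-M_2(n)}\bigr)^2}{2\bigl(\mathrm{Var}(M_1(n))+\mathrm{Var}(M_2(n))\bigr)+\bigl(\E{M_1(n)-M_2(n)}\bigr)^2}
\end{equation*}
propagates the finite-$n$ separation of means to a uniform lower bound on total variation, using only constancy of the mean and boundedness of the variance. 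If you want to salvage your approach, you would have to replace your seed-referenced statistics by invariant ones of this kind, at which point you are essentially reconstructing the paper's argument.
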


Observe that Theorem~\ref{thm:metric} means that  $ \mathrm{d_{TV}}({T}_{n}^{(S_{1})},  {T}_{n}^{(S_{2})})$ remains bounded away from $0$ as $n \rightarrow \infty$, as soon as the two seeds $S_{1},S_{2}$ are different and consist of at least $3$ vertices. In  \cite{BMR14v3} this is proved for seeds with different degree sequences by studying the asymptotic behavior of the tail of the degrees of the vertices of $T_{n}^{(S)}$,  and the authors notice that additional information concerning the graph structure has to be incorporated to solve the general case. To this end, they suggest to study the maximum of the sum of the degrees over all embeddings of a fixed tree in $T_{n}^{(S)}$.

{In order to establish Theorem~\ref{thm:metric}, we design another a family of "observables" of $T^{(S)}_{n}$, indexed by finite trees $\tau$ which roughly correspond to the {total} number of possible embeddings of a given tree $\tau$ into $T_{n}^{(S)}$.  {Using these variables, we then construct a family of martingales such that} their laws differ asymptotically for different seed graphs.} 

{Although rather implicit in our proof of Theorem~\ref{thm:metric}, 
the {underlying} key feature of the LPMA is the geometric structure induced by the nodes of large degree in $T_{n}^{(S)}$.  
It is known that the maximal degree in $T_{n}^{(S)}$ is of order $ \sqrt{n}$ (see e.g. \cite{Mor05}) 
and that there is a tight number of vertices with degree of this order.  
Roughly speaking, the geometric tree structure induced by these vertices
is captured by the martingales constructed for the proof of Theorem~\ref{thm:metric}.
In this spirit, our second main result is devoted to giving a  {precise} sense to the continuous
{scaling limit of} this structure through the looptree associated with $T_{n}^{(S)}$. 
As we will see below, the looptree of a plane tree encodes in a natural way the geometric structure of nodes of large degree.}

 \paragraph{Scaling limits of looptrees.} For our next results, we consider the planar version of the LPAM.
 For a plane (i.e.~embedded in the plane) tree $S$ with $n_0$ vertices, 
 consider the sequence of random plane trees $(\overline{T}_n^{(S)})_{n \geq n_{0}}$
 defined by $\overline{T}_{n_0}^{(S)} = S$ and, for $n \geq n_{0}$, conditionally on $\overline{T}_{n_0}^{(S)}, \dots, \overline{T}_{n}^{(S)}$,
 $\overline{T}_{n+1}^{(S)}$ is obtained by grafting an edge leading to a new vertex 
 inside a uniformly chosen corner of $\overline{T}_{n}^{(S)}$
 (by definition, a corner is an angular sector in the plane formed by two consecutive half-edges around a vertex).
 Since the number of corners around a vertex is equal to its degree, 
 it is immediate that the tree structure of $(\overline{T}_n^{(S)})_{n \geq n_{0}}$ is that of a LPAM. Thus we no longer distinguish between $\overline{T}_n^{(S)}$ and $T_n^{(S)}$.

The plane embedding of the LPAM allows us to consider its associated looptree. 
The notion of looptree was introduced in \cite{CK13} 
(see also \cite{CK13+} for the appearance of looptrees in the context of random maps). 
Informally speaking, the looptree $ \mathsf{Loop}(\tau)$ of a plane tree $\tau$ 
is the graph constructed by replacing each vertex $u \in \tau$ 
by a discrete cycle of length given by the degree of $u$ in $ \tau$ 
and gluing these cycles according to the tree structure of $\tau$, see Fig.~\ref{fig:loop}. 
See \cite{CK13} for a formal definition. 
One may view $ \mathsf{Loop}( \tau)$ as a compact metric space by endowing the set of its vertices with the graph distance. 

\begin{figure}[!h]
 \begin{center}
 \includegraphics[width=10cm]{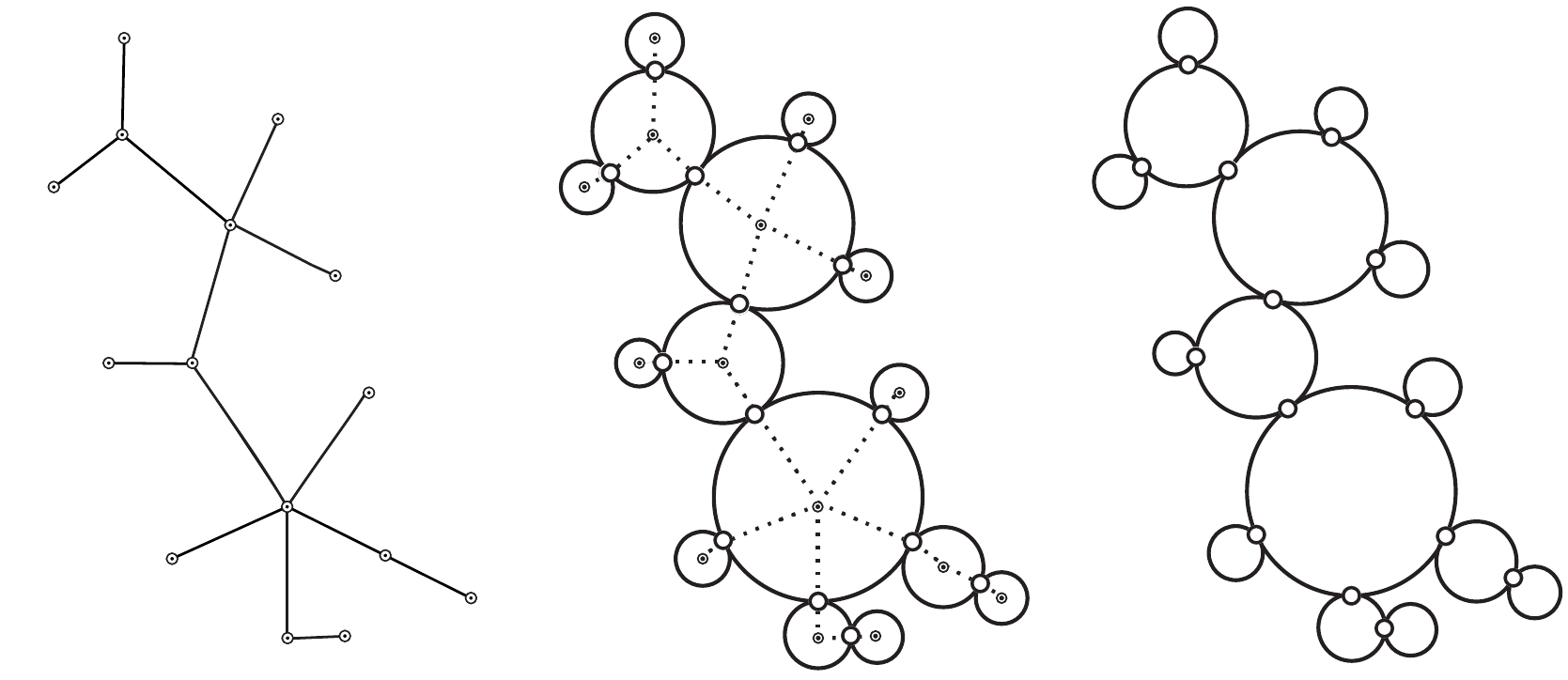}
 \caption{An example of the looptree associated with a plane tree.}
 \label {fig:loop}
 \end{center}
 \end{figure}

\newcommand{\thefootnote}{\arabic{footnote}}	

We will show that, for a plane tree $S$, the sequence of compact metric spaces $(\mathsf{Loop}({T}_{n}^{(S)}))_{n \geq |S|}$, 
suitably rescaled by a factor $ n^{-1/2}$, converges towards a random compact metric space. 
The latter convergence is almost sure with respect to the Gromov--Hausdorff 
topology of compact metric spaces; see Section~\ref{sec:GH} for background. 

It will be useful to consider for a start the case of the particular seed graph $\multimap$ 
consisting of a single vertex with a unique corner.
Formally, $ \multimap$ is a \emph{planted} tree. 
By definition, a tree $ \tau$ is planted if a distinguished half-edge is attached to a vertex of $ \tau$ 
(thus increasing the degree of this vertex by one and adding a corner to it). 
One defines the sequence of random planted plane trees $( {T}_{n}^{\multimap})_{n \geq 1}$ 
by the preferential attachment rule described above, starting with the seed graph $\multimap$ 
(to simplify notation we write ${T}_{n}^{\multimap}$ instead of ${T}_{n}^{(\multimap)}$). 
See Fig.~\ref{fig:ABplanaire} for an illustration. 

\begin{figure}[!h]
 \begin{center}
 \includegraphics[width=0.9 \linewidth]{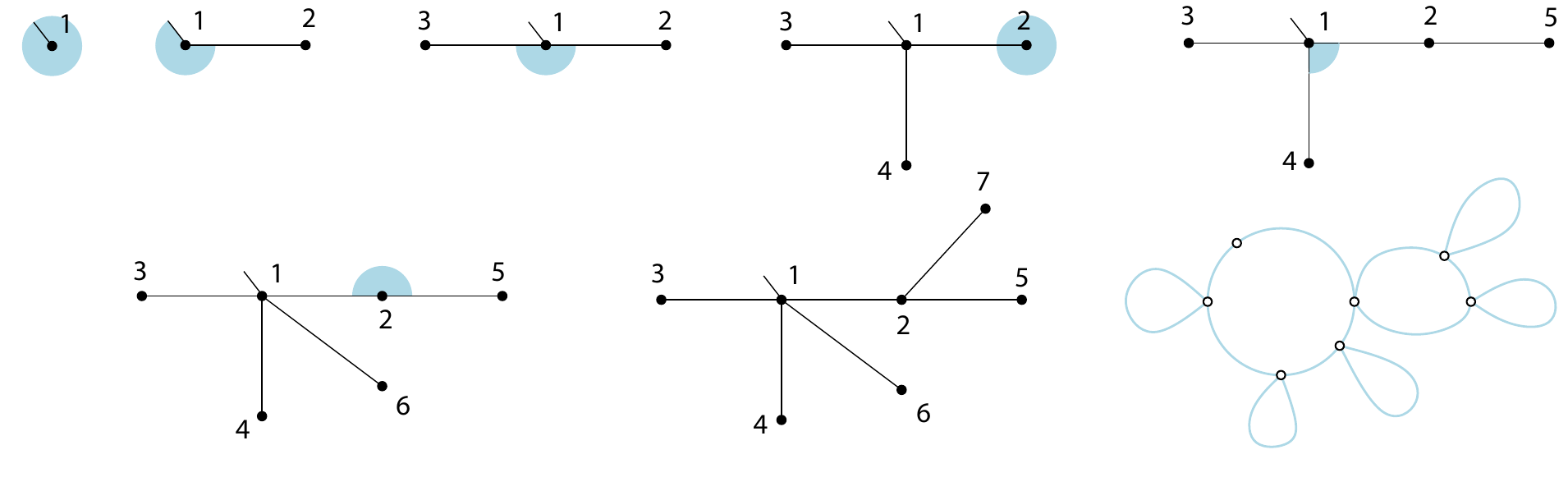}
 \caption{Illustration of the growth mechanism of the plane LPAM started from $\multimap$. 
 At each step, the corner in which a new edge is grafted is highlighted in light blue. 
 The last diagram is the looptree of the last planted tree displayed.}
\label{fig:ABplanaire}
 \end{center}
 \end{figure}
 
The looptree associated with a planted tree $ \tau$ is the looptree of the tree obtained by adding a new vertex to the endpoint of the half-edge of $ \tau$, but where the self-loop surrounding this new vertex is removed, see Fig.~\ref{fig:ABplanaire} for an example. 
If $ \mathcal{M}$ is a metric space, we write $ c \cdot  \mathcal{M}$ for the metric space obtained from $ \mathcal{M}$ by multiplying all distances by $c>0$.
Our result second main result is the following. 

\begin{theorem} \label{thm:scaling} 
	The following convergence holds almost surely in the Gromov--Hausdorff topology
  	\begin{eqnarray*} 
		n^{-1/2} \cdot \mathsf{Loop}( T_{n}^\multimap) & \xrightarrow[n\to\infty]{a.s.} & 2 \sqrt{2} \cdot  \mathcal{L},  
	\end{eqnarray*} 
	where $\mathcal{L}$ is a random compact metric space called the Brownian looptree. 
\end{theorem}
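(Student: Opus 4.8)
The plan is to reduce the almost-sure Gromov–Hausdorff convergence of the rescaled looptrees to a convergence statement about the underlying plane trees, and then to transfer this to a known scaling limit. First I would recall the structure of the planted preferential attachment tree $T_n^\multimap$ and identify a natural encoding that is amenable to an almost-sure limit. The key observation is that the looptree $\mathsf{Loop}(\tau)$ is obtained from $\tau$ by blowing up each vertex into a cycle whose length equals its degree, so the metric structure of $\mathsf{Loop}(T_n^\multimap)$ is governed by two pieces of data: the tree shape of $T_n^\multimap$ and the sequence of vertex degrees. I expect that $T_n^\multimap$, rescaled by $n^{-1/2}$ in graph distance, converges almost surely to a multiple of Aldous' Brownian Continuum Random Tree (CRT); this is the natural candidate since the maximal degree is of order $\sqrt n$ and the tree diameter should also be of this order. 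The Brownian looptree $\mathcal{L}$ is then defined as a quotient of the CRT, exactly as announced in the abstract.

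The main technical work, and the step I expect to be the hardest, is to establish the almost-sure (not merely distributional) convergence. The standard route is to construct a single probability space carrying all the $T_n^\multimap$ together with the limiting CRT, and to control the discrepancy uniformly. I would look for a martingale or Pólya-urn embedding: preferential attachment trees admit a representation via a continuous-time branching process (a Yule-type or Pólya urn scheme) in which the relative subtree masses converge almost surely. Concretely, I would encode $T_n^\multimap$ by a contour or height function and argue that, after the $n^{-1/2}$ rescaling, this encoding converges uniformly almost surely to the Brownian excursion (up to the explicit constant $2\sqrt 2$) along the natural coupling. Controlling the degrees simultaneously is crucial: one must show that the cycles introduced by the looptree construction do not distort distances at the macroscopic scale, i.e.\ that the total length contributed by the large-degree cycles along any geodesic is negligible compared to $\sqrt n$, or is absorbed into the limiting quotient metric.

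With the tree-level convergence and degree control in hand, the remaining step is to show that the looptree functor is continuous for the Gromov–Hausdorff topology along this coupling. Here I would define the Brownian looptree $\mathcal{L}$ as the quotient of the CRT obtained by gluing, for each branch point, the two sides of the corresponding macroscopic loop, and verify that $n^{-1/2}\cdot \mathsf{Loop}(T_n^\multimap)$ and $2\sqrt 2\cdot\mathcal{L}$ are within Gromov–Hausdorff distance tending to $0$ almost surely. The correspondence is built from the natural projection of $\mathsf{Loop}(T_n^\multimap)$ onto $T_n^\multimap$; one checks that this projection is an approximate isometry once the cycle lengths, rescaled, encode the correct local loop structure, and that the discrete loops fill in to the continuous loops of $\mathcal{L}$. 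The constant $2\sqrt 2$ should emerge from combining the scaling constant of the CRT limit of $T_n^\multimap$ with the factor by which the looptree construction dilates distances around high-degree vertices. The principal obstacle throughout is upgrading distributional convergence to almost-sure convergence and simultaneously controlling the full degree profile, rather than a fixed finite number of high-degree vertices, uniformly along the geodesics of the limiting space.
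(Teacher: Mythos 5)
Your proposal rests on a premise that is false: you assert that $n^{-1/2}\cdot T_n^{\multimap}$ should converge almost surely to a multiple of the Brownian CRT because ``the tree diameter should also be of this order [$\sqrt n$]''. In fact the diameter of $T_n^{\multimap}$ is of order $\log n$ (this is noted in the remark following the statement of the theorem), so the tree rescaled by $n^{-1/2}$ collapses to a point; there is no CRT limit of the tree itself, and a contour/height-function encoding of $T_n^{\multimap}$ rescaled by $n^{-1/2}$ converges to the zero function, not to a Brownian excursion. Consequently the entire architecture of your argument --- prove a CRT limit for the tree, then show the looptree functor is continuous and that the cycles ``do not distort distances at the macroscopic scale'' --- cannot work. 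The situation is exactly the opposite of what you describe: at scale $\sqrt n$ essentially \emph{all} of the metric of $\mathsf{Loop}(T_n^{\multimap})$ comes from the loops (whose lengths are the vertex degrees, the largest being of order $\sqrt n$), while the contribution of the underlying tree distances is $O(\log n)$ and negligible. This is precisely why one passes to looptrees in the first place.

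The mechanism by which the CRT actually enters is different and is the key idea you are missing. One uses the coupling (due to Pek\"oz--R\"ollin--Ross, Proposition~\ref{prop:coupling} in the paper) under which $(\mathsf{Loop}(T_n^{\multimap}))_{n\ge1}$ has the same law as $(\mathsf{Glu}(\mathbf{B}_n))_{n\ge1}$, where $\mathbf{B}_n$ is the uniform labeled binary tree with $n+1$ leaves produced by R\'emy's algorithm and $\mathsf{Glu}$ identifies each new leaf $A_i$ with its projection onto the subtree spanned by $A_0,\dots,A_{i-1}$. It is the \emph{binary} tree $B_n$ --- not $T_n^{\multimap}$ --- that converges almost surely at scale $n^{-1/2}$ to $2\sqrt2\cdot\mathcal{T}_{\mathbf{e}}$ (jointly with any finite number of marked leaves, by Curien--Haas), and the loops of the looptree are the glued leaf-to-spanning-tree geodesics of $B_n$. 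The proof then consists of showing that the gluing operation passes to the limit: for fixed $k$ the $k$-leaf gluings converge, and a $3\varepsilon$-argument using the fact that $A_0,\dots,A_k$ (resp.\ $X_0,\dots,X_k$) form asymptotic $\varepsilon$-nets controls the Gromov--Hausdorff distance between the $k$-leaf gluing and the full gluing uniformly in $n$. Your proposal contains no substitute for the R\'emy coupling, and without it there is no route from the preferential attachment dynamics to the CRT.
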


\begin{remark} 
It is natural to scale $\mathsf{Loop}(T_{n}^\multimap)$ by a factor $ n^{-1/2}$ 
in order to obtain a non-degenerate limiting compact metric space.
Indeed, lengths of loops in $\mathsf{Loop}( T_{n}^\multimap)$ correspond to vertex degrees of $T^\multimap_{n}$, 
and it is well-known that the  maximum degree of $T_{n}^\multimap$ is of order $ \sqrt{n}$ (see e.g. \cite{Mor05}). 
Moreover the diameter of $T_n^{\multimap}$ is of much lower order, namely of order $ \log(n)$ (see e.g.\,\cite[Sec.~11]{Rem13}).
In light of the above, it is not surprising that looptrees associated with $T_{n}^\multimap$ admit a nontrivial scaling limit, while the trees $T_{n}^\multimap$ themselves do not. 
\end{remark}

The metric space $ \mathcal{L}$ is constructed as a quotient of the Brownian Continuum Random Tree (in short the CRT) which was introduced by Aldous in \cite{Ald91a}. Let us give for the moment a heuristic construction of $ \mathcal{L}$. Denote by $ \mathcal{T}_{ \mathbf{e}}$ the CRT obtained from a Brownian excursion $ \mathbf{e}$ (see e.g. \cite[Sec.~2]{LG05}). This random tree supports a natural mass measure $\mu$. This is a probability measure on $ \mathcal{T}_{ \mathbf{e}}$ and is supported by the leaves of $ \mathcal{T}_{ \mathbf{e}}$.
Denote by $(X_{i})_{i \geq 0}$ a sequence of i.i.d.~points sampled according to $\mu$. 
For every $n \geq 2$, consider the subtree $ \mathsf{Span}( \mathcal{T}_{ \mathbf{e}} ; X_{0}, \ldots , X_{n})$ of $ \mathcal{T}_{ \mathbf{e}}$
spanned by $X_{0}, X_{1},\ldots , X_{n}$ (see Sec.~\ref {sec:br} for a precise definition). Denote by $P_{n}$ the point in $ \mathsf{Span}( \mathcal{T}_{ \mathbf{e}} ; X_{0}, \ldots , X_{n-1})$ which is the closest to $X_{n}$, see Fig.~\ref{fig:rabbit2}. 
Set also $P_{1}=X_{0}$. 
Informally, the compact metric space $ \mathcal{L}$ 
is obtained from $ \mathcal{T}_{ \mathbf{e}}$ by making the point identifications $X_{n} \sim P_{n}$ for every $n \geq 1$. 
See Section~\ref{section:scaling} for the rigorous construction.

\begin{figure}[!h]
 \begin{center}
 \includegraphics[width=0.9 \linewidth]{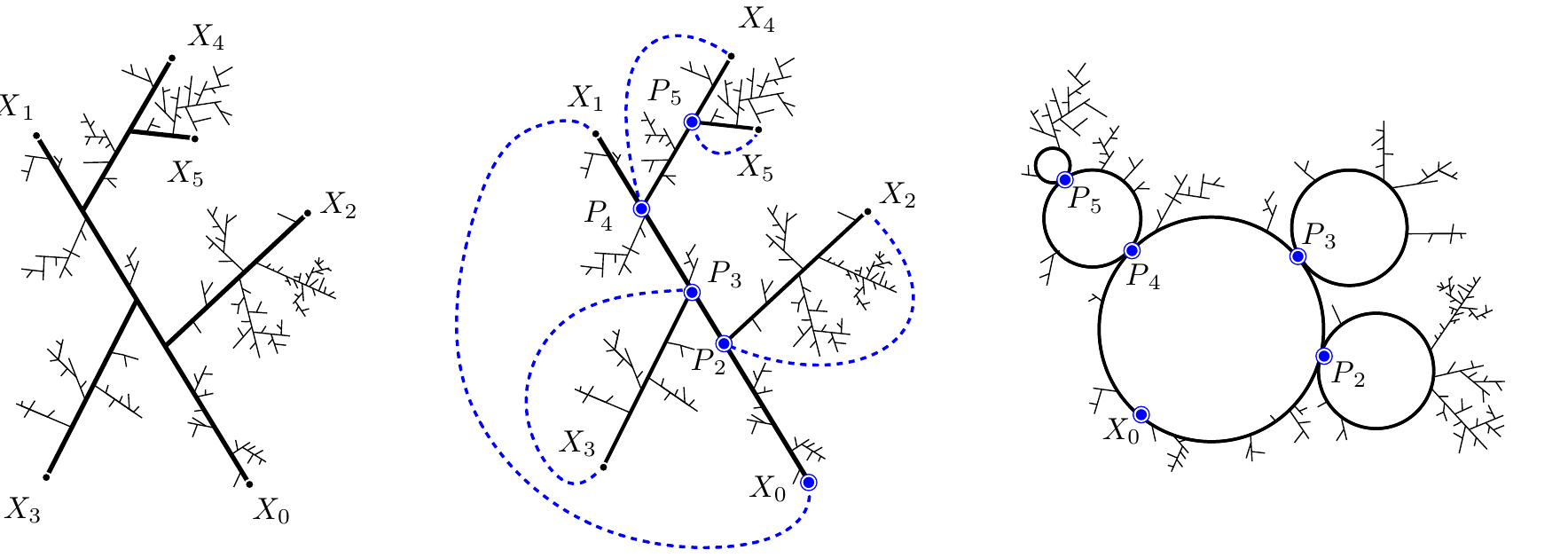}
 \caption{The metric space obtained from $ \mathcal{T}_{ \mathbf{e}}$ 
 by making the identifications $X_{n} \sim P_{n}$ for $1 \leq n \leq 5$.}
 \label{fig:rabbit2}
 \end{center}
\end{figure}

Since the sizes of loops in $ \mathsf{Loop}({T}_{n}^\multimap)$ correspond to vertex degrees in $ {T}_{n}^\multimap$, 
$\mathcal{L}$ contains the limiting joint distribution of the scaled degrees in $ T_{n}^\multimap$.
This distribution has been computed in \cite{PRR14} and asymptotic estimates on its tails studied in \cite {BMR14v3}.
But $\mathcal{L}$ incorporates additional information concerning the graph structure of $ T_{n}^\multimap$.

An important tool is a coupling between the LPAM and Rémy's algorithm \cite{Rem85} which appeared in \cite{PRR14}.  The proof of Theorem~\ref{thm:scaling} combines this coupling with the convergence of scaled uniform binary trees towards the Brownian CRT.

For planar LPAM's starting with a generic seed graph $S$, 
we obtain as consequence of Theorem~\ref{thm:metric} a convergence similar to that for the seed $\multimap$. 

\begin{corollary}\label{cor:scaling} 
	For any plane tree $S$ there exists a random compact metric space $\mathcal{L}^{(S)}$ such that following convergence 
	holds almost surely for the Gromov--Hausdorff topology
	\begin{eqnarray*}
		n^{-1/2} \cdot \mathsf{Loop}(  T_{n}^{(S)}) & \xrightarrow[n\to\infty]{a.s.} &   2 \sqrt{2} \cdot  \mathcal{L}^{(S)}. 
	\end{eqnarray*} 
\end{corollary}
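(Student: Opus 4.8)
\emph{Proof proposal.} The plan is to reduce the statement to Theorem~\ref{thm:scaling} by decomposing $T_n^{(S)}$ into independent planted sub-trees growing out of the corners of the seed, and then to reconstruct the limit by a gluing procedure. Fix a plane tree $S$ and write $c_1,\dots,c_\kappa$ for its corners. Run the plane preferential attachment dynamics from $S$ and, for each $i$, let $\mathcal{T}^{(i)}_n$ be the planted sub-tree consisting of all vertices grafted inside (a descendant corner of) $c_i$, together with the half-edge joining it to the seed; let $N_i(n)$ be its number of added vertices, so that $\sum_i N_i(n)=n-|S|$. First I would observe that, since at each step a corner is chosen uniformly and grafting inside a region raises its number of corners by exactly $2$, the vector of region-sizes evolves as a P\'olya urn with $\kappa$ colours, each starting with one ball and reinforced by $2$. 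Standard P\'olya urn asymptotics then give, almost surely, $N_i(n)/n\to\Delta_i$, where $(\Delta_1,\dots,\Delta_\kappa)$ follows a Dirichlet distribution with all parameters equal to $1/2$; in particular $\Delta_i>0$ and $N_i(n)\to\infty$ almost surely. Moreover, conditionally on the sequence of region-choices, the choice of corner \emph{within} a region at each step is uniform and depends only on the past choices in that region, so the sub-trees $(\mathcal{T}^{(i)}_n)_{i}$ are independent planted LPAMs, each distributed as $T^\multimap$ run for the corresponding number of steps.

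The second step is to upgrade Theorem~\ref{thm:scaling} to an almost sure statement along the random times $N_i(n)$. I would realise everything on a single probability space carrying the urn process together with $\kappa$ independent sequences $(\mathcal{T}^{(i),[m]})_{m\ge 1}$ of planted LPAMs, setting $\mathcal{T}^{(i)}_n=\mathcal{T}^{(i),[N_i(n)]}$; the discussion above shows this coupling has the correct law. Theorem~\ref{thm:scaling} applied to each sequence gives $m^{-1/2}\cdot\mathsf{Loop}(\mathcal{T}^{(i),[m]})\to 2\sqrt{2}\cdot\mathcal{L}^{(i)}$ almost surely, with $\mathcal{L}^{(1)},\dots,\mathcal{L}^{(\kappa)}$ independent copies of the Brownian looptree, independent of $(\Delta_i)_i$. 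Composing with $N_i(n)\to\infty$ and $N_i(n)/n\to\Delta_i$ yields, almost surely and jointly in $i$,
\begin{equation*}
n^{-1/2}\cdot\mathsf{Loop}(\mathcal{T}^{(i)}_n)\;=\;\Big(\tfrac{N_i(n)}{n}\Big)^{1/2} N_i(n)^{-1/2}\cdot\mathsf{Loop}(\mathcal{T}^{(i),[N_i(n)]})\;\xrightarrow[n\to\infty]{}\;2\sqrt{2}\cdot\big(\sqrt{\Delta_i}\cdot\mathcal{L}^{(i)}\big).
\end{equation*}

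It remains to recombine the pieces. The looptree $\mathsf{Loop}(T_n^{(S)})$ is obtained from the looptrees $\mathsf{Loop}(\mathcal{T}^{(i)}_n)$ by gluing them along the combinatorial structure of $S$: for each seed vertex $v$ the root-loops of the sub-trees hanging in the corners of $v$ are concatenated cyclically into the single loop coding $\deg_{T_n}(v)$, and these loops are joined through the (at most $|S|-1$) edges of $S$. Since $S$ is fixed, its skeleton contributes a bounded number of edges, each of rescaled length $O(n^{-1/2})\to 0$; hence after rescaling the seed collapses to a finite set of identification points. I would therefore define $\mathcal{L}^{(S)}$ as the compact metric space obtained by gluing the independent spaces $\sqrt{\Delta_i}\cdot\mathcal{L}^{(i)}$ at their roots according to the incidence pattern of the corners of $S$, and prove the corollary by bounding
\begin{equation*}
\dGH\!\big(n^{-1/2}\cdot\mathsf{Loop}(T_n^{(S)}),\,2\sqrt{2}\cdot\mathcal{L}^{(S)}\big)\;\le\;C\sum_{i=1}^{\kappa}\dGH\!\big(n^{-1/2}\cdot\mathsf{Loop}(\mathcal{T}^{(i)}_n),\,2\sqrt{2}\sqrt{\Delta_i}\cdot\mathcal{L}^{(i)}\big)+C\,n^{-1/2}\,\mathrm{diam}(S),
\end{equation*}
which tends to $0$ almost surely by the previous step. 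The main obstacle is precisely this last inequality: one must show that the gluing-and-collapsing operation is Lipschitz for the Gromov--Hausdorff distance and, more delicately, that concatenating the (converging) root-loops at a common seed vertex is continuous. This is where the real work lies, as it requires strengthening the convergence of each $\mathsf{Loop}(\mathcal{T}^{(i)}_n)$ to a rooted Gromov--Hausdorff convergence that also keeps track of the root-loop, rather than mere Gromov--Hausdorff convergence; by contrast the urn asymptotics and the conditional independence are routine.
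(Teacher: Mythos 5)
Your overall strategy coincides with the paper's: decompose $T_n^{(S)}$ into independent planted LPAMs grown in the corners of $S$ (this is Proposition~\ref{prop:decomp}), use P\'olya urn asymptotics to obtain the Dirichlet$(1/2,\dots,1/2)$ weights, apply Theorem~\ref{thm:scaling} along the random times $N_i(n)\to\infty$, and glue the rescaled limits along the seed. Those parts of your argument are correct and match the paper.

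The gap is exactly where you say ``the real work lies'', and it is not merely a matter of upgrading to \emph{rooted} convergence: gluing the closed looptrees $\sqrt{\Delta_i}\cdot\mathcal{L}^{(i)}$ ``at their roots'' is not the right operation, because in $\mathsf{Loop}(T_n^{(S)})$ the root loop of the subtree planted in a corner $c_i$ of a seed vertex $v$ does not survive as a loop: it is cut open at the root vertex and its two ends are concatenated with the neighbouring pieces (and with the vertices coming from the seed edges) to form the single large cycle coding $\deg_{T_n}(v)$. A one-point wedge gluing of the $\mathcal{L}^{(i)}$ would therefore produce a genuinely different metric space --- for instance the distance between two points sitting on opposite arcs of a root loop would be wrong. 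The paper's resolution is to introduce the modified looptree $\widetilde{\mathsf{Loop}}(\underline{P})$, obtained by cutting $\mathsf{Loop}(\underline{P})$ at the root vertex and retaining \emph{two} marked boundary points $g(\underline{P})$ and $d(\underline{P})$; the corresponding limit $\widetilde{\mathcal{L}}$ is $\mathcal{L}$ built \emph{without} the identification $X_0\sim X_1$, marked at $\pi(X_0)$ and $\pi(X_1)$. One then needs the $2$-pointed Gromov--Hausdorff version of Theorem~\ref{thm:scaling} (a simple extension, since the proof already proceeds through pointed convergence of R\'emy's trees), and $\mathcal{L}^{(S)}$ is defined by identifying the two marked points of $2\sqrt{2}\sqrt{\alpha_i}\cdot\widetilde{\mathcal{L}}_i$ with the midpoints of the seed edges to the left and right of $c_i$. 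With this two-pointed convergence in hand the gluing map is continuous (finitely many identifications at marked points, plus the vanishing contribution of the seed skeleton, as you correctly note), and the corollary follows. In short: right route, but the decisive step --- cutting the root loops open and carrying two marked points through the limit --- is missing from your write-up, and the limit object as you literally describe it would not reconstruct $\mathsf{Loop}(T_n^{(S)})$.
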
  

The limiting metric space $\mathcal{L}^{(S)}$
is constructed by gluing weighted i.i.d.~copies of $\mathcal{L}$; see Section~\ref{sec:genseed} for details.  
In light of Theorem~\ref{thm:metric}, 
we expect that if $S_1 \neq S_2$ are different seed graphs with at least three vertices, 
then the laws of $\mathcal{L}^{(S_{1})}$ and of $\mathcal{L}^{(S_{2})}$ are different 
and we further conjecture (see Section~\ref{sec:genseed})
that the distance $d(\cdot,\cdot)$ appearing in Theorem~\ref{thm:metric} can be expressed as 
 \begin{eqnarray} \label{eq:conjintro} d( S_{1},S_{2}) = \mathrm{d_{TV}}( \mathcal{L}^{(S_{1})}, \mathcal{L}^{(S_{2})}).  \end{eqnarray}

 \medskip 
 Several other random compact metric spaces have been constructed as quotients of the Brownian CRT 
and appear as limits of discrete structures. 
For example, the scaling limit of the connected components of the Erd\H{o}s-R\'enyi random graph 
is described by a tilted Brownian CRT with a  finite number of point identifications \cite{ABBG12}. 
These conserve many of the properties of the CRT (such as the Hausdorff dimension equal to $2$). 
Another example is the Brownian map, obtained from the CRT by gluing a continuum number of points 
using additional randomness involving Brownian motion indexed by the CRT, see \cite{LGICM}. 
In this case, the structure of the metric space is drastically altered by the identifications,
and it is known that the Brownian map is almost surely homeomorphic to the sphere 
and has Hausdorff dimension $4$ (see \cite{LGICM}). 
The Brownian looptree is, in some sense, in-between the two examples above, 
since it involves a countable number of point identifications in the CRT, 
which change completely its topological structure, 
but conserve the Hausdorff dimension of the CRT. 
 
\begin{proposition}\label{prop:hausdorff} 
	Almost surely, the Hausdorff dimension of $ \mathcal{L}$ is $2$.
\end{proposition}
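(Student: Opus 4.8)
The plan is to prove the two inequalities $\dim_{\mathrm H}\mathcal L\le 2$ and $\dim_{\mathrm H}\mathcal L\ge 2$ separately, the upper bound being soft and the lower bound carrying all the difficulty. For the upper bound I would exploit that $\mathcal L$ is by construction a quotient of the CRT $\mathcal T_{\mathbf e}$: writing $\pi:\mathcal T_{\mathbf e}\to\mathcal L$ for the canonical projection, the quotient metric satisfies $d_{\mathcal L}(\pi(a),\pi(b))\le d_{\mathcal T_{\mathbf e}}(a,b)$, so that $\pi$ is $1$-Lipschitz and onto. Since a Lipschitz map does not raise Hausdorff dimension, $\dim_{\mathrm H}\mathcal L=\dim_{\mathrm H}\pi(\mathcal T_{\mathbf e})\le\dim_{\mathrm H}\mathcal T_{\mathbf e}=2$, the last equality being Aldous' computation of the dimension of the CRT (see \cite{LG05}).

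For the lower bound I would push the mass measure forward, setting $\nu=\pi_\ast\mu$, and invoke the mass distribution principle: since the leaves are dense in $\mathcal T_{\mathbf e}$ we have $\mathrm{supp}\,\nu=\mathcal L$, so it suffices to show that for every $\varepsilon>0$, almost surely,
\[
\nu\big(B_{\mathcal L}(\pi(x),r)\big)\;\le\;C\,r^{\,2-\varepsilon}\qquad\text{for }\nu\text{-a.e. }\pi(x)\text{ and all small }r,
\]
with a random finite constant $C$; this forces the lower local dimension of $\nu$ to be at least $2-\varepsilon$, hence $\dim_{\mathrm H}\mathcal L\ge 2-\varepsilon$, and letting $\varepsilon\to 0$ concludes. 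The whole point is therefore to control $\nu(B_{\mathcal L}(\pi(x),r))=\mu\big(\pi^{-1}B_{\mathcal L}(\pi(x),r)\big)$, i.e. the $\mu$-mass of the set of $y$ with $d_{\mathcal L}(\pi(x),\pi(y))<r$.

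The key reduction is to unfold the quotient metric: a point $y$ lies in $\pi^{-1}B_{\mathcal L}(\pi(x),r)$ if and only if $y$ can be reached from $x$ along a concatenation of geodesic segments of $\mathcal T_{\mathbf e}$ of total length $<r$, interspersed with cost-free jumps between the endpoints $P_n$ and $X_n$ of the loops. In the absence of jumps this is just the tree-ball $B_{\mathcal T_{\mathbf e}}(x,r)$, whose $\mu$-mass is at most $r^{2-\varepsilon}$ by the uniform volume estimates for the CRT mass measure (see \cite{LG05}). A jump across the pair $(P_n,X_n)$ is free but displaces by the loop circumference $\ell_n=d_{\mathcal T_{\mathbf e}}(P_n,X_n)$. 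The two features I would exploit are that only finitely many loops have circumference larger than any fixed $\delta$, and that the circumferences decay (of order $n^{-1/2}$), so that large shortcuts are rare. The heart of the estimate is a dichotomy: jumps across loops with $\ell_n\le r$ should keep the exploration inside a tree-neighbourhood $B_{\mathcal T_{\mathbf e}}(x,Cr)$ of mass $\lesssim r^{2-\varepsilon}$, while only the finitely many loops with $\ell_n>r$ that are accessible within the budget can carry it into genuinely new regions, each contributing an extra tree-ball of radius $<r$ and hence mass $\lesssim r^{2-\varepsilon}$; a count shows that, on average, only $O(1)$ such long branches attach inside a ball of radius $r$, since there are $\approx r^{-2}$ loops with $\ell_n>r$ spread over a tree of unit mass.

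The main obstacle is precisely to make this dichotomy rigorous in the face of \emph{chaining}. Because the identification points are dense in $\mathcal T_{\mathbf e}$, a naive exploration allows arbitrarily long chains of jumps, and after crossing one long loop to its far tip $X_n$ one may reach further long loops; one must rule out that such recursive excursions pull into the ball substantially more mass than a single tree-ball of radius $O(r)$. The ingredients I expect to need, all uniform in $x$ and $r$, are: (a) the uniform CRT volume bound above; (b) a rarity estimate showing that the number of loops of circumference $>r$ whose base lies within tree-distance $r$ of a given point is at most subpolynomial in $1/r$; and (c) a control of the recursion, exploiting that each excursion across a long loop consumes a definite amount of the remaining budget and lands at the tip of a long branch, so that the accessible long loops organise into an essentially disjoint family whose masses add rather than multiply. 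Combining (a)--(c) to obtain the local mass bound $\nu(B_{\mathcal L}(\pi(x),r))\le r^{2-\varepsilon}$ is the crux; the passage from there to the Hausdorff dimension via the mass distribution principle is then routine.
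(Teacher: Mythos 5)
Your upper bound is exactly the paper's: $\pi$ is $1$-Lipschitz for the quotient metric $\Delta$, so $\dim_{\mathrm H}\mathcal L\le\dim_{\mathrm H}\mathcal T_{\mathbf e}=2$. Your lower bound also starts the same way --- push $\mu$ forward to $\nu=\pi_*\mu$ and aim for a local mass estimate $\nu(B_r(x))\ll r^{2-\varepsilon}$, then apply a density theorem for Hausdorff measures; this is precisely the role of Lemma~\ref{lem:borneinfDH}. The gap is in how you propose to prove that estimate. You want to bound $\mu\big(\pi^{-1}B_{\mathcal L}(\pi(x),r)\big)$ for a \emph{fixed} radius $r$ by unfolding the quotient metric and controlling all chains of cost-free jumps across the identified pairs $(P_n,X_n)$; you correctly identify that the identification points are dense, that chains can be arbitrarily long, and that controlling the recursion is ``the crux'' --- but you do not supply that control, and none of your ingredients (a)--(c) is more than a heuristic. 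In particular (c) is not routine: after crossing a long loop one lands at a point $X_n$ which is itself a $\mu$-typical leaf, in any neighbourhood of which further identification points accumulate, so the claim that the accessible long loops form an ``essentially disjoint family whose masses add'' is exactly what would need to be proved. As written, the lower bound is not established.

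The paper avoids this difficulty entirely by not working with a fixed radius at a fixed point. It fixes an independent uniform leaf $Y$ and builds a nested sequence of subtrees $\mathcal T^1\supset\mathcal T^2\supset\cdots$ containing $Y$, where $\mathcal T^{i+1}$ is cut out of $\mathcal T^i$ at the branching point of the current root $\tilde P_i$, of $Y$ and of the first $X_k$ falling in $\mathcal T^i$. The loop $L_i$ created by the identification $X_{k_i}\sim\tilde P_i$ is a \emph{bottleneck}: any path in $\mathcal L$ from $\pi(Y)$ to a point outside $\pi(\mathcal T^i)$ must traverse one of the two arcs of $L_i$, hence has length at least $\mathcal X_i=\min\big(d(\tilde P_i,\tilde P_{i+1}),d(X_{k_i},\tilde P_{i+1})\big)$ (Proposition~\ref{prop:bi}). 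Therefore $B_{\mathcal X_i}(\pi(Y))\subset\pi(\mathcal T^i)$ and $\nu\big(B_{\mathcal X_i}(\pi(Y))\big)\le\mu(\mathcal T^i)$, with no chaining to control at all. Aldous' recursive self-similarity of the CRT then makes $(-\log\mu(\mathcal T^i))_{i}$ a random walk with exponential steps of parameter $1/2$ and each $\mathcal X_i$ a $\sqrt{\mu(\mathcal T^i)}$-rescaled copy of a fixed variable, giving $\log\mu(\mathcal T^i)\sim-2i$ and $\log\mathcal X_i\sim-i$, whence $\mu(\mathcal T^i)/\mathcal X_i^{2-\varepsilon}\to0$. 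If you want to salvage your approach, the lesson is to choose the radii adapted to the geometry --- scales at which a separating loop exists around the reference point --- rather than to fight the quotient metric at every scale simultaneously.
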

 
We mention that in \cite{CK13}, a related one-parameter family of random compact metric spaces  $(\mathscr{L}_{ \alpha})_{ \alpha \in (1,2)}$ has been constructed. They are called stable looptrees, and  appear as scaling limits of discrete looptrees associated with large critical Galton--Watson trees whose offspring distribution belongs to the domain of attraction of an $\alpha$-stable law. The Brownian looptree introduced in this work differs substantially from stable looptrees. For example, in the Brownian looptree, large loops are adjacent, while in $\mathscr{L}_{ \alpha}$ large loops are connected through infinitely many microscopic loops. In addition, in \cite{CK13} it is shown that the Hausdorff dimension of $ \mathscr{L}_{ \alpha}$ is almost surely $ \alpha<2$. 

\bigskip 
We believe that, as illustrated by Theorem~\ref{thm:scaling}, looptrees are an interesting means to give a sense to scaling limits of highly dense random trees. See in particular, Section~\ref{sec:glpm} for a conjecture concerning affine preferential attachment models and random trees built by Ford's algorithm.  We hope to pursue this line of research in future work.

\paragraph*{Outline.} 
The paper is organized as follows.  
Sections~\ref{sec:influence} and ~\ref{section:scaling} establish Theorems~\ref{thm:metric} and~\ref{thm:scaling}, respectively. 
In Section~\ref{sec:influence}, we first define the observables that we use, then prove Theorem~\ref{thm:metric}. 
In Section~\ref{section:scaling} we start by presenting the connection between the plane LPAM and R\'emy's algorithm,  
then construct the Brownian looptree from the Brownian CRT and prove Theorem~\ref{thm:scaling} and its corollary. 
We end the section with the computation of the Hausdorff dimension of the Brownian looptree. 
These two sections are largely independent. 
Finally, in Section~\ref{sec:ext}, we propose several extensions and generalizations.

\section{Influence of the seed graph} \label{sec:influence}

In this section, we assume that the LPAM is started from a seed graph $S$,
which is a (non planted) tree with at least two vertices. 
In particular, the total degree of $T_{n}^{(S)}$ (that is  the sum of the degrees of all its vertices) is always equal to $2n-2$. 

\newcommand{\EE}{\mathbb{E}}
\newcommand{\ind}{\mathbbm{1}}

\subsection{Decorated trees}\label{sec:decorated}

\newcommand{\bs}{\boldsymbol { \sigma}} 
\newcommand{\br}{\boldsymbol { \rho}}

A decorated tree is a pair $\boldsymbol { \tau}=( \tau, \ell)$
consisting of a tree $ \tau$ and a family of positive integers $(\ell(u); u \in \tau)$ carried by its vertices.
We denote by $|\boldsymbol { \tau}|$ the total number of vertices of $\tau$ 
and set $w(\boldsymbol { \tau}) := \sum_{u \in \tau}\ell(u)$ to be the total weight of $ \bt$. 
We insist on the fact that $ \ell(u)>0$ for every $u \in \tau$.

Let $ \mathcal{D}$ be the set of all decorated trees. 
For $ \bt, \btp \in \mathcal{D}$,  we write $  \bt \prec \btp$ 
if $w(\bt) < w(\btp)$ and $|\bt | \leq |\btp|$  or if $w(\bt) = w(\btp)$ and  $|\bt | < |\btp|$.
Thus $\prec$ is a strict partial order on $\mathcal{D}$ {and we denote by $\preccurlyeq$ the associated partial order}. 

We now define the observables which will be {used} to identify the seed of a LPAM.  
For $k,j\geq 1$, write $[k]_{j}= k(k-1) \cdots ( k-j+1)$. If $ \tau,T$ are trees, we say that a map $ \phi: \tau \rightarrow T$ is an embedding if $ \phi$ is an injective graph homomorphism.
For a decorated tree $\bt$, set
$$D_{\bt}(T) = \sum_{ \phi} \prod_{u \in \tau} [\deg_{T}\phi(u)]_{ \ell(u)},$$ 
where the sum is taken over all embeddings $ \phi : \tau \rightarrow T$ and where $ \deg_{T}(x)$ denotes the degree of a vertex $x \in T$. 
When $\tau = \text{\ding{172}}$ is the decorated tree formed of a single vertex with label one, $D_{\text{\ding{172}}}(T)$ is just the total degree of~$T$. Theorem~\ref{thm:metric} is a consequence of the following proposition.

\begin{proposition} \label{prop:martingales} 
	Let $\bt$ be a decorated tree.
	There exist constants $\{c_n(\bt, \btp):  \btp \preccurlyeq \bt, n \geq 2\}$ with $c_{n}(\bt,\bt)>0$ such that, 
	for every seed $S$, the process $(M^{(S)}_{\bt}(n))_{n \geq n_0}$ defined by
	$$ M^{(S)}_{\bt}(n) = 
	 \sum_{\btp \preccurlyeq \bt} c_n(\bt, \btp) \cdot D_{\btp}(T_{n}^{(S)}) $$
	is a martingale with respect to the filtration $ \mathcal{F}_{n}= \sigma( T_{n_{0}}^{(S)}, \ldots,   T_{n}^{(S)})$
	and is bounded in $\L^2$.
\end{proposition}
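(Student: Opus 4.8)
The plan is to first establish a one-step recursion for the quantities $D_{\bt}(T_n^{(S)})$ under the attachment dynamics, then read off the martingale by inverting a triangular linear system, and finally control the $L^2$ norm through the martingale increments.

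First I would compute $\mathbb{E}[D_{\bt}(T_{n+1}^{(S)})\mid \mathcal{F}_n]$. Recall that $T_{n+1}^{(S)}$ is obtained from $T_n^{(S)}$ by attaching a new leaf $v$ to a vertex $x$ chosen with probability $\deg_{T_n}(x)/(2n-2)$; this raises $\deg(x)$ by one, sets $\deg(v)=1$, and leaves all other degrees unchanged. I would split the embeddings $\phi:\tau\to T_{n+1}^{(S)}$ according to whether their image avoids $v$ (Case A) or contains $v$ (Case B). The whole computation is driven by the falling-factorial identity
$$[k+1]_\ell = [k]_\ell + \ell\,[k]_{\ell-1},$$
which is exactly why falling factorials, rather than plain powers, appear in $D_{\bt}$. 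In Case A only the factor carried by the (at most one) vertex sent to $x$ changes, and the identity above splits it into the unchanged term plus a correction with one label lowered by $1$. Averaging over $x$ with weight $\deg(x)/(2n-2)$ and using the companion identity $d\,[d]_{m}=[d]_{m+1}+m\,[d]_{m}$ to re-expand $\deg(x)\cdot[\deg(x)]_{m}$ into a combination of $[\deg(x)]_j$'s, I obtain a diagonal contribution $\tfrac{w(\bt)}{2n-2}D_{\bt}(T_n^{(S)})$ together with terms of strictly smaller weight. In Case B the vertex of $\tau$ sent to $v$ must carry label $1$ (else the factor $[1]_\ell$ vanishes) and its neighbour must be sent to $x$; summing and averaging again yields a finite combination of $D_{\btp}$ supported on the decorated tree obtained by deleting that leaf, with $w(\btp)\le w(\bt)$ and $|\btp|<|\bt|$. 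Collecting everything gives
$$\mathbb{E}\big[D_{\bt}(T_{n+1}^{(S)})\mid \mathcal{F}_n\big]=\Big(1+\tfrac{w(\bt)}{2n-2}\Big)D_{\bt}(T_n^{(S)})+\tfrac{1}{2n-2}\!\!\sum_{\btp\prec\bt} a(\bt,\btp)\,D_{\btp}(T_n^{(S)}),$$
with constants $a(\bt,\btp)$ independent of $n$ and of $S$ (they come from the falling-factorial algebra alone), so the relation is triangular for the well-founded, locally finite order $\prec$.

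Second, I would build the coefficients $c_n(\bt,\btp)$ by inverting this triangular system, inducting over the finite down-set $\{\btp\preccurlyeq\bt\}$. Writing $\mathbb{E}[M_{\bt}^{(S)}(n+1)\mid\mathcal{F}_n]=M_{\bt}^{(S)}(n)$ and matching the coefficient of each $D_{\btp}(T_n^{(S)})$ gives, for the top term, the scalar recursion $c_n(\bt,\bt)=c_{n+1}(\bt,\bt)\big(1+\tfrac{w(\bt)}{2n-2}\big)$, solved by $c_n(\bt,\bt)\asymp\big(\prod_{k}(1+\tfrac{w(\bt)}{2k-2})\big)^{-1}$ up to a positive constant; choosing this constant positive fixes the normalization and yields $c_n(\bt,\bt)>0$. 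Since the $a(\bt,\btp)$ do not depend on $S$, the resulting $c_n$ are $S$-independent, as required. For each $\btp\prec\bt$ the matching condition is a first-order linear recursion in $n$ whose inhomogeneous term is already determined by the coefficients higher up in $\prec$; I would solve it by summing the associated series. The point to keep in mind is that the homogeneous rates $\prod_k(1+\tfrac{w(\btp)}{2k-2})\asymp n^{w(\btp)/2}$ can resonate with the forcing, so some $c_n(\bt,\btp)$ may grow like powers of $\log n$; this is harmless and in fact necessary, as these growing coefficients are precisely what cancels the secular (super-homogeneous) growth of the individual $D_{\btp}(T_n^{(S)})$, leaving $M_{\bt}^{(S)}$ of constant order.

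Finally, for the $L^2$ bound — which I expect to be the main obstacle — I would not estimate $\mathbb{E}[M_{\bt}^{(S)}(n)^2]$ directly, but instead use orthogonality of increments, $\mathbb{E}[M_{\bt}^{(S)}(n)^2]=\mathbb{E}[M_{\bt}^{(S)}(n_0)^2]+\sum_{k}\mathbb{E}[(\Delta M_{\bt}^{(S)}(k))^2]$, and show the series converges. The increment $\Delta M_{\bt}^{(S)}(k)$ has vanishing conditional mean, and its conditional variance is controlled by $c_{k+1}(\bt,\cdot)^2$ times the conditional variance of the one-step changes of the $D_{\btp}(T_k^{(S)})$; since $x$ is sampled proportionally to its degree, the latter is bounded by expectations $\mathbb{E}[D_{\bs}(T_k^{(S)})]$ for decorated trees $\bs$ of larger weight, which I would estimate inductively from the first-step recursion (each grows at most like $k^{w(\bs)/2}$ up to polylogarithmic factors). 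To linearize the squares and products arising in second moments, I would use
$$[d]_a[d]_b=\sum_{j}\binom{a}{j}\binom{b}{j}\,j!\,[d]_{a+b-j},$$
which writes $D_{\bt}\cdot D_{\btp}$ as a finite nonnegative combination of $D_{\bs}$ indexed by the gluings of the two embeddings along a common sub-embedding, with total weight at most $w(\bt)+w(\btp)$. Matching $c_{k+1}(\bt,\bt)\asymp k^{-w(\bt)/2}$ against these moment bounds should give $\mathbb{E}[(\Delta M_{\bt}^{(S)}(k))^2]=O(k^{-1-\delta})$ for some $\delta>0$, whence summability. The delicate point, and the heart of the argument, is exactly this accounting: one must verify that the degree-weighted fluctuations of the $D_{\btp}$ are small enough, relative to the normalization that produces a martingale, for the squared increments to be summable, and this rests on the combinatorial bookkeeping of which glued trees $\bs$ occur together with the sharp growth exponents (and their logarithmic corrections) of the $\mathbb{E}[D_{\bs}(T_k^{(S)})]$.
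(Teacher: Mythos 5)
Your proposal follows essentially the same route as the paper: the one-step recurrence for $D_{\bt}$ via the falling-factorial identities and the split of embeddings according to whether they use the new vertex (the paper's Lemma~\ref{lem:espcond}), the inductive inversion of the resulting triangular system along $\prec$ with normalization $\alpha_n^{\bt}\asymp n^{-w(\bt)/2}$, and the $\L^2$ bound via orthogonality of increments combined with first- and second-moment estimates on $D_{\bs}$ for glued/overlapped trees $\bs$ of weight at most $2w(\bt)-1$ (the paper's Corollary~\ref{cor:embed} and Lemma~\ref{lem:embed3}). The only cosmetic difference is that you linearize products $D_{\bt}D_{\btp}$ algebraically via the falling-factorial product formula, whereas the paper does the same bookkeeping combinatorially through overlaps of decorated embeddings (the sets $\mathcal{U}_2(\bt)$ and the disjoint-image quantity $D_{\bt,\btp}$, which your "empty common sub-embedding" case would still need to handle separately, as in the paper's Lemma~\ref{lem:embed2}).
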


\begin{remark}
Rather than the quantities $D_{\bt}$ defined above, 
a more natural family of observables to consider are the number $E_\tau$ of embeddings of a tree $\tau$ inside $T$.   
These observables could indeed be used to distinguish between seeds of the LPAM (the martingales $M$ of Proposition~\ref{prop:martingales} could be written in terms of $E_\tau$ only).  However, as we will see, the main advantage of the observables $D_{\bt}$ is that they are more amenable for recurrence relations (see Lemma~\ref{lem:espcond}).
\end{remark}

The quantity $D_{\bt}(T)$ has a special interpretation for plane trees $T$. 
Imagine that there are $\ell(u)$ distinguishable arrows pointing to each vertex $u \in \tau$.
Then $D_{\bt}(T)$ is the number of ways to embed $\tau$ in $T$ 
in such a way that each arrow pointing to a vertex of $\tau$ 
is associated with a corner of $T$ adjacent to the corresponding vertex,
with distinct arrows associated with distinct corners. 
We call this type of embeddings decorated embeddings. 

Proposition~\ref {prop:martingales} is the main ingredient in the proof of Theorem~\ref {thm:metric};
its proof occupies the following subsections. 
Before, let us explain how to deduce Theorem~\ref{thm:metric} from Proposition~\ref {prop:martingales}.

\begin {proof}[Proof of Theorem~\ref {thm:metric}] 
	Let $S_{1} \neq S_{2}$ be two distinct trees with at least $3$ vertices. We claim that if $n_{0} = \max( |S_{1}|, |S_{2}|)$, then 
	there exists a deterministic decorated tree $\bt$ such that 
	\begin{equation}\label{eq:bt}
		\Es{ D_{\bt}(T_{n_{0}}^{(S_{1})} )} \quad \neq  \quad \mathbb{E}[D_{\bt}(T_{n_{0}}^{(S_{2})})].
	\end{equation}
	To see this,  suppose by symmetry that $|S_1| \leq |S_2|$ and set $S_1' = T_{|S_2|}^{(S_1)}$. 
	Thus $S_1'$ is a random tree when $|S_1| < |S_2|$. If we take $\bt = S_2$ 
	with labels $\ell(u) = \deg(u)$, then, for every tree $T$  with $|T| = |S_2|$, 
	we have $D_{\bt}(T) = D_{\bt}(S_2) \cdot \ind_{  \{T = S_2\}}.$
	Consequently, for this particular value of $\bt$, 
	$$ \Es{ D_{\bt}(S_{1}') } = D_{\bt}(S_2) \cdot \PP [S_{1}' = S_2 ].$$
	When $|S_1| = |S_2|$ the above probability is $0$.
	When $|S_1| < |S_2|$ it may easily be checked that $S_1'$ is non-deterministic 
	(here it is essential that $|S_1| \geq 3$), 
	hence  the probability above is strictly less than $1$. 
	In both cases~\eqref{eq:bt} holds for this choice of $\bt$. 
	
	Let $\bt$ be a minimal {(for the partial order $ \preccurlyeq$)} decorated tree for which~\eqref{eq:bt} holds. 
	Then $ \mathbb{E}[ D_{\btp}(T_{n_{0}}^{(S_{1})})] =  \mathbb{E}[ D_{\btp}(T_{n_{0}}^{(S_{2})})]$ for all $\btp \prec \bt$ 
	and it follows that $$\Es {M^{(S_{1})}_{\bt}(n_{0}) } \neq \Es{ M^{(S_{2})}_{\bt}(n_{0})},$$
	where $	M^{(S_1)}_{\bt}$ and $M^{(S_2)}_{\bt}$ are martingales as in Proposition~\ref{prop:martingales}.  To simplify notation, set $M_{1}(n)=M^{(S_{1})}_{\bt}(n)$ and $M_{2}(n)=M^{(S_{2})}_{\bt}(n)$. For $n \geq n_{0}$, we may bound the distance in total variation between 
	$T_{n}^{(S_{1})}$ and $T_{n}^{(S_2)}$ as follows
	(see for instance \cite[p.8] {BMR14v2})
	\begin{eqnarray*}
		d_{TV}\left(T_{n}^{(S_{1})},T_{n}^{(S_{2})}\right)  \geq  d_{TV}\left(M_{1}(n),M_{2}(n)\right)  
		\geq   \frac{ \left( \Es {M_{1}(n)- M_{2}(n)}\right)^{2}}
		{2\left( \mathrm{Var}\left(M_{1}(n)\right)+  \mathrm{Var} \left(M_{2}(n)\right)\right)
		+ \left( \Es {M_{1}(n)- M_{2}(n)}\right)^{2}}.
	\end{eqnarray*}
	Since $M_{1}$ and $M_{2}$ are martingales, we have 
	$ \Es {M_{1}(n)}- \Es {M_{2}(n)} = \Es {M_{1}(n_0)}- \Es {M_{2}(n_0)}  \neq 0$
	and $\mathrm{Var}(M_{1}(n))+  \mathrm{Var} (M_{2}(n))$ 
	is bounded as $n \rightarrow \infty$ since the two martingales are bounded in $\mathbb{L}^2$. 
	Thus the quantity $d_{TV}\left(T_{n}^{(S_{1})},T_{n}^{(S_{2})}\right)$ is uniformly bounded away from $0$ as $n \to \infty$ as desired. 
\end {proof}

\subsection{The recurrence relation}
In this section, we present a recurrence relation for the conditional expectations of $ D_{ \bt}(T_{n}^{(S)})$. This relation is the key to Theorem~\ref{thm:metric} since it is used to build the martingales of Proposition~\ref{prop:martingales} and get moment estimates on them.

\begin{lemma}\label {lem:espcond}
	There exists a family of nonnegative real numbers $\{c(\bt,\btp) : \btp \prec\bt\}$ such that, 
	for every seed $S$, every decorated tree $\bt$ with $w(\bt) >1$ and every $n \geq |S|$ we have 
	\begin{equation}\label{eq:rec}
		\Es {D_{ \bt} \big( T_{n+1}^{(S)} \big)  \big | \mathcal{F}_{n} }
		= \left( 1 + \frac{w(\bt)}{2n-2} \right)  D_{ \bt} \big( T_{n}^{(S)} \big) +
		\frac{1}{2n-2} \sum_{\btp \prec \bt} c(\bt, \btp) D_{\btp} \big( T_{n}^{(S)} \big).
	\end{equation}
	When $\tau = \text{\ding{172}}$ we have $D_{\text{\ding{172}}} ( T_{n}^{(S)}) = 2n-2$.
\end{lemma}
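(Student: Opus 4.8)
The plan is to condition on the single random choice made in passing from $T_n^{(S)}$ to $T_{n+1}^{(S)}$. Writing $v$ for the new vertex and $W$ for the vertex of $T_n^{(S)}$ to which $v$ is attached, we have $\PP(W = x \mid \mathcal{F}_n) = \deg(x)/(2n-2)$ since the total degree is $2n-2$. For a fixed outcome $W = x$, I would split the embeddings $\phi : \tau \to T_{n+1}^{(S)}$ appearing in the sum defining $D_{\bt}(T_{n+1}^{(S)})$ into two families: \emph{(Case 1)} those avoiding $v$, i.e.\ $\phi : \tau \to T_n^{(S)}$, and \emph{(Case 2)} those with $v \in \mathrm{Im}(\phi)$. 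Throughout I would use the two elementary falling-factorial identities
\begin{equation*}
  [d+1]_\ell - [d]_\ell = \ell\,[d]_{\ell-1}, \qquad d\,[d]_{m} = [d]_{m+1} + m\,[d]_m,
\end{equation*}
which together yield $d\cdot \ell\,[d]_{\ell-1} = \ell\,[d]_\ell + \ell(\ell-1)[d]_{\ell-1}$ and, after iteration, $d\,[d+1]_\ell = [d]_{\ell+1} + 2\ell\,[d]_\ell + \ell(\ell-1)[d]_{\ell-1}$.

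For Case 1, only the degree of $x$ changes (it increases by one), so for each $\phi : \tau \to T_n^{(S)}$ the weight differs from its value in $T_n^{(S)}$ only when $x \in \mathrm{Im}(\phi)$, and by the first identity the discrepancy equals $\ell(u_0)[\deg x]_{\ell(u_0)-1}\prod_{u\ne u_0}[\deg\phi(u)]_{\ell(u)}$ with $u_0 = \phi^{-1}(x)$. Averaging over $x$ with weight $\deg(x)/(2n-2)$, the unperturbed part contributes exactly $D_{\bt}(T_n^{(S)})$ (using $\sum_x \deg x = 2n-2$), while the discrepancy, after swapping the sums and applying $d\cdot\ell\,[d]_{\ell-1} = \ell[d]_\ell + \ell(\ell-1)[d]_{\ell-1}$, produces $\tfrac{1}{2n-2}\big(\sum_u \ell(u)\big)D_{\bt}(T_n^{(S)}) = \tfrac{w(\bt)}{2n-2}D_{\bt}(T_n^{(S)})$ together with a remainder $\tfrac{1}{2n-2}\sum_{u_0}\ell(u_0)(\ell(u_0)-1)D_{\btp}(T_n^{(S)})$, where $\btp$ is $\bt$ with the label at $u_0$ decreased by one. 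This explains both the factor $1 + \tfrac{w(\bt)}{2n-2}$ and a first family of lower-order terms; note these terms automatically vanish when $\ell(u_0)=1$, so all labels stay positive.

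For Case 2, since $\deg_{T_{n+1}^{(S)}}(v)=1$ and $v$ is adjacent only to $x$, injectivity forces $v$ to be the image of a single vertex $u_*$ which must be a leaf of $\tau$ (here $w(\bt)>1$ rules out $\tau = \text{\ding{172}}$), and the factor $[1]_{\ell(u_*)}$ is nonzero only if $\ell(u_*)=1$; moreover the neighbour $u_*'$ of $u_*$ must satisfy $\phi(u_*')=x$. Restricting $\phi$ to $\tau' = \tau\setminus\{u_*\}$ gives an embedding of $\tau'$ into $T_n^{(S)}$ sending $u_*'$ to $x$, and summing over $x$ with weight $\deg(x)/(2n-2)$ introduces the factor $d\,[d+1]_{\ell(u_*')}$ at $u_*'$. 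The second identity above then rewrites this as a combination $[d]_{\ell+1}+2\ell[d]_\ell+\ell(\ell-1)[d]_{\ell-1}$, giving a $\tfrac{1}{2n-2}$-weighted sum of three terms $D_{\btp}(T_n^{(S)})$, each supported on $\tau'$ with the label at $u_*'$ equal to $\ell+1$, $\ell$, or $\ell-1$ respectively (the last vanishing when $\ell=1$).

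Finally I would collect all contributions and verify the partial-order claim, which is the step demanding the most care: for the Case 1 remainder one has $w(\btp)=w(\bt)-1$ and $|\btp|=|\bt|$, hence $\btp\prec\bt$; for the three Case 2 terms $|\btp|=|\bt|-1$, while the weights are respectively $w(\bt)$, $w(\bt)-1$ and $w(\bt)-2$, so in each case $\btp\prec\bt$. All resulting coefficients $c(\bt,\btp)$ — namely $\ell(u_0)(\ell(u_0)-1)$ and $1, 2\ell(u_*'), \ell(u_*')(\ell(u_*')-1)$ — are manifestly nonnegative, establishing \eqref{eq:rec}. The closing assertion is immediate: for $\tau = \text{\ding{172}}$ the embeddings are the vertices of $T_n^{(S)}$ and $D_{\text{\ding{172}}}(T_n^{(S)}) = \sum_{x}\deg(x) = 2n-2$. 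The main obstacle is bookkeeping Case 2 correctly and checking that every decorated tree produced there is $\prec\bt$ while keeping all labels strictly positive.
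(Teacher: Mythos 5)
Your proposal is correct and follows essentially the same route as the paper's proof: the same split of embeddings according to whether the new vertex is used, the same three falling-factorial identities, and the same resulting families of decorated trees ($\bt$ with one label decreased, and $\bt$ with a label-one leaf removed and its neighbour's label shifted by $+1$, $0$ or $-1$), together with the same verification that each is $\prec \bt$. The only cosmetic difference is that you condition explicitly on the attachment vertex and average with weights $\deg(x)/(2n-2)$, whereas the paper writes the same computation via indicators inside conditional expectations.
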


\begin{proof}
\newcommand{\weight}{\mathcal{W}}

Fix a tree $S$ with $|S| \geq 2$ and $n >|S|$. To simplify notation, we omit the dependence on $S$ and write $T_{n}$ instead of $T^{(S)}_{n}$. 
It will be clear by construction that the coefficients $c(\bt, \btp)$ do not depend on $S$.  We have already noticed that when $ \bt = \text{\ding{172}}$, $D_{ \bt} ( T_{n})$ is  the total degree of $T_{n}$, which is indeed  $2n-2$. 

Now fix a decorated tree $\bt$ with $w(\bt) \geq 2$. We denote by $u_{n+1}$ the vertex present in $T_{n+1}$ but not in $T_n$, and by $v_n$ its neighbour in $T_{n}$. We write the set all embeddings $\phi : \tau \rightarrow T_{n+1}$  as the disjoint union of the set of those using only vertices of $T_n$, denoted by $ \mathcal{E}_{n}$, and the set of those using the new vertex $u_{n+1}$, which is denoted by  $\mathcal{E}_{n+1} \backslash \mathcal{E}_{n}$.
If $T$ is a tree and $\phi : \tau \to T $ is an embedding, 
we write $\weight_{\phi}(T) = \prod_{u \in \tau} \left[\deg_{T}\phi(u) \right]_{ \ell(u)}$.

Let us evaluate $\Es {D_{ \bt} \big( T_{n+1} \big) \ \big | \ \mathcal{F}_{n} }$.
Since we work conditionally on $\mathcal{F}_n$, we may consider $T_n$ as being fixed. 
Then 
$D_{ \bt} \big( T_{n+1} \big)  =  \sum_{\phi \in \mathcal{E}_{n+1} } \weight_{\phi}(T_{n+1}),$ 	
and we split the sum into two, depending on whether $\phi \in \mathcal{E}_n$ 
or $\phi \in \mathcal{E}_{n+1} \setminus \mathcal{E}_n$. 
First of all, it is a simple matter to check that for every  $\ell,d \geq 1$,
$$ \begin{array}{lrcl}(\star) & [ d+1]_{\ell} &=& [d]_{\ell} + \ell \cdot [d]_{\ell- 1}, \\ 
 (\star \star) & d \cdot [d]_{\ell - 1} &=&  [d]_{\ell} + (\ell -1) \cdot [d]_{\ell -1},\\
 (\star\star \star) \quad & d \cdot [d +1]_{\ell}
&=&[d]_{\ell + 1}
+ 2  \ell \cdot [d]_{\ell}
+ \ell  (\ell - 1) \cdot [d]_{\ell -1}.  \end{array}$$
First assume that  $\phi \in \mathcal{E}_n$. 
Since $ \mathrm{deg}_{T_{n+1}}(v_{n}) = \mathrm{deg}_{T_{n}}(v_{n})+1$, it follows that
\begin{eqnarray*}
	&& \Es{ \weight_{\phi}(T_{n+1}) \ | \ \mathcal{F}_n } \\
	 & \underset{(\star)}{=}& \weight_{\phi}(T_n) +
		\Es{ 
		\sum_{w \in \bt} \ind_{  \{\phi(w) = v_n\}}
		\cdot \ell(w) \cdot \left[\deg_{T_n}\phi(w) \right]_{\ell(w)-1}
		\cdot \prod_{w' \in \bt \setminus \{w\}} \left[\deg_{T_n}(\phi(w')) \right]_{\ell(w')}\,\Big| \ \mathcal{F}_n } \\
	&=& \weight_{\phi}(T_n) + 
		\sum_{w \in \bt} 
		\frac{\deg_{T_n}\phi(w)}{2n-2} 
		\cdot \ell(w) \cdot \left[\deg_{T_n}\phi(w) \right]_{\ell(w)-1}
		\cdot \prod_{w' \in \bt \setminus \{w\}} \left[\deg_{T_n}(\phi(w)) \right]_{\ell(w')} \\
	&\underset{(\star \star)}{=}& \left(1 + \frac{w(\tau)}{2n-2}\right)  \cdot \weight_{\phi}(T_n) +
		\sum_{w \in \bt} \frac{\ell(w)  (\ell(w) - 1)}{2n-2} \cdot
		\left[\deg_{T_n}\phi(w) \right]_{\ell(w) -1 }
		\cdot \prod_{w' \in \bt \setminus \{w\}} \left[\deg_{T_n}\phi(w) \right]_{\ell(w')}  \\	
	&=& \left(1 + \frac{w(\tau)}{2n-2}\right)  \cdot \weight_{\phi}(T_n) +
		\sum_{w \in \bt} \frac{\ell(w)(\ell(w) - 1)}{2n-2} \cdot \weight_{\phi_w}(T_n),	
\end{eqnarray*}
where $\phi_w$ is the embedding equal to $\phi$ of the decorated tree $\bt_w$ identical to $\bt$
except for the label of $w$ which is $\ell_{\bt_w} (w) = \ell_{\bt} (w) -1$. 
Note that such trees appear in the expression only if $\ell_{\bt_w} (w) > 0$. 
When $\phi$ runs through the embeddings of $\bt$ in $T_n$, 
$\phi_w$ runs exactly through the embeddings of $\bt_w$ in $T_n$. 
Thus 
\begin{eqnarray}\label{eq:Wn}
		\mathbb{E}\left[ \left.\sum_{\phi \in \mathcal{E}_{n} } \weight_{\phi}(T_{n+1}) \,\right|\, \mathcal{F}_n \right]
		&=&
		\left(1 + \frac{w(\tau)}{2n-2}\right)  D_{\bt}(T_n) +
		\sum_{w \in \bt} \frac{\ell(w)(\ell(w) - 1)}{2n-2} \cdot D_{\bt_w}(T_n).
\end{eqnarray}
Notice that the trees $\tau_{w}$ for $w \in \tau$ may not be distinct but  all have the property $ \tau_{w} \prec \tau$.

Denote by $L$ the set of all leaves (i.e.\,vertices of $ \tau$ of degree $1$) $w \in \tau$ such that $\ell(w) = 1$. If  $\phi \in \mathcal{E}_{n+1} \setminus \mathcal{E}_n$, consider  $w \in \tau$  such that $ \phi(w)=u_{n+1}$.  Note that if $w \not \in L$, then necessarily $\weight_{\phi}(T_{n+1}) = 0$. Since $\tau \neq \text{\ding{172}}$, we may assume that $|\tau| \geq 2$. If $w \in L$, we denote by $ \mathcal{E}_{w}$ the set of all embeddings  $\phi \in \mathcal{E}_{n+1} \setminus \mathcal{E}_n$ such that $ \phi(w)=u_{n+1}$. Now fix $w  \in L, \phi \in \mathcal{E}_{w}$ and let $a$ be the neighbour of $w$ in $ \tau$. Since $[ \deg_{T_{n+1}}( \phi(w))]_{ \ell(w)}=1$ and $ \phi : \tau \backslash  \{w\} \rightarrow T_{n}$ restricted to $\tau \backslash  \{w\}$ is an embedding, we have
\begin{eqnarray*}
	\Es{ \weight_{\phi}(T_{n+1})\ | \ \mathcal{F}_n } 
	 &=& \Es{ \ind_{  \{\phi(a) = v_n\}}
		\cdot \prod_{w' \in \bt \setminus {w}} { \left[\deg_{T_{n+1}}(\phi(w')) \right]_{\ell(w')}} \,\Big| \mathcal{F}_n } \\
	&=& \frac{\deg_{T_n}\phi(a)}{2n-2} \cdot
			{ \left[\deg_{T_{n}}\phi(a) + 1 \right]_{\ell(a)}}
			\cdot \prod_{w' \in \bt \setminus \{w, a\}} { \left[\deg_{T_{n}}(\phi(w')).\right]_{\ell(w')}} 
\end{eqnarray*}

Write $\bt_w^1$, $\bt_w^2$ and $\bt_w^3$ for the 
decorated trees obtained from $\bt$ by removing $w$ and respectively 
increasing by one the label of $a$, leaving it unchanged and 
decreasing it by one. 
Let  $\phi_w^1$, $\phi_w^2$ and $\phi_w^3$ be the respective embeddings 
of $\bt_w^1$, $\bt_w^2$ and $\bt_w^3$ in $T_{n}$
obtained by restricting $\phi$ to $ \tau \backslash  \{w\}$. 
Then, using the previous display and $(\star\star\star)$, we obtain
\begin{eqnarray*}
	\Es{ \weight_{\phi}(T_{n+1})\ | \ \mathcal{F}_n }
	= \frac{1}{2n-2}\left( \weight_{\phi_w^1}(T_{n}) + 
	2\ell(a) \cdot \weight_{\phi_w^2}(T_{n}) +
	\ell(a)(\ell(a) -1 ) \cdot \weight_{\phi_w^3}(T_{n}) \right).
\end{eqnarray*}
Now note that, for fixed $w \in L$, as $\phi$ runs through $ \mathcal{E}_{w}$, the embeddings
$\phi_w^1$, $\phi_w^2$ and $\phi_w^3$ run respectively through all the embeddings of  $\bt_w^1$, $\bt_w^2$ and $\bt_w^3$ in $T_n$. 
Thus, summing over all $w \in L$, we obtain
\begin{eqnarray}\label{eq:Wnn+1}
	\sum_{\phi\in \mathcal{E}_{n+1} \setminus \mathcal{E}_{n} }	
	\Es{ \weight_{\phi}(T_{n+1}) \ | \ \mathcal{F}_n } 
	 = \frac{1}{2n-2}\sum_{w \in L}  \left(D_{\bt_w^1}(T_{n}) + 
	2\ell(a) \cdot D_{\bt_w^2}(T_{n}) +
	\ell(a)(\ell(a) -1 ) \cdot D_{\bt_w^3}(T_{n}) \right).
\end{eqnarray}

After adding~\eqref{eq:Wn} and~\eqref{eq:Wnn+1}, one gets an expression for 
$\Es{D_{\bt}(T_{n+1}) \ | \ \mathcal{F} _n}$ as a function of $D_{\bt}(T_{n})$, $D_{\bt_w}(T_{n})$,
$D_{\bt_w^1}(T_{n})$, $D_{\bt_w^2}(T_{n})$ and $D_{\bt_w^3}(T_{n})$, 
with coefficients depending on $\bt$ only.
Finally, we mention that $\bt_w, \bt_w^1, \bt_w^2, \bt_w^3$ are all smaller than $\bt$ for the order $\prec$. 
\end{proof}

We now sketch another possible argument to prove Lemma~\ref{lem:espcond} relying  on decorated embeddings, which were defined just after the statement of Proposition \ref{prop:martingales}. We mention this approach since a similar one will be used later. 

First note that $T_{n+1}$ contains two more corners than $T_n$:
one around the vertex to which the new edge is grafted, 
and one around the new vertex which is added in the transition from $T_{n}$ to $T_{n+1}$. 
Call these corners respectively $c_n'$ and $c_n''$
and let $c_n$ be the corner of $T_n$ in which the additional edge of $T_{n+1}$ is grafted. 
Then $c_n$ corresponds to one of the neighbouring corners of $c_n'$ in $T_{n+1}$. 
The $D_{\bt}(T_{n+1})$ decorated embeddings of $\bt$ in $T_{n+1}$ may be split between 
those using at least one of the corners $c_n'$, $c_n''$, and those using none of them. 
There are $D_{\bt}(T_n)$ embeddings of the second type. 
With each decorated embedding $\phi$ of the first type, 
associate an embedding $\phi_{\bs}$ of a decorated tree $\bs \preccurlyeq \bt$ in $T_n$
obtained by conserving the arrows associated by $\phi$ with corners common to $T_n$ and $T_{n+1}$
and adding an arrow pointing to $c_n$ (if such an arrow did not already exist). 

Depending on which of the corners $c_n, c_n', c_n''$ are used by $\phi$, $\bs$ takes different values. 
Thus $D_{\bt}(T_{n+1}) - D_{\bt}(T_{n})$ may be expressed as a linear combination 
of numbers of decorated embeddings of trees $\bs \preccurlyeq \bt$ in $T_{n}$ with an arrow in the corner $c_n$.
But since $c_n$ is uniform among the corners of $T_n$, 
in expectation these numbers are $\frac{1}{2n-2}D_{\bs}(T_n)$, which leads to 	\eqref{eq:rec}. 

The proof of Lemma~\ref{lem:embed3} illustrates in more detail the use of these ideas.

\subsection{Moment estimates}

Relying on Lemma~\ref{lem:espcond}, we now establish moment estimates on the number of decorated embeddings, 
which will be used to check that the martingales of Proposition \ref{prop:martingales} are bounded in $ \L^2$. 
In the following,  if $(a_{n})_{n \geq 0}$ and $(b_{n})_{n\geq 0}$ are two sequences of  real numbers, 
we write $a_{n} \ll b_{n}$  if there exist $c>0$ and $\gamma \in \R$ such that 
$|a_{n}| \leq c \cdot  \log(n)^ \gamma \cdot |b_{n}|$ for $n$ large enough. 

In this section, we fix a tree $S$ with $|S|\geq 2$, and write $T_n$ for $T_n^{(S)}$ to simplify notation.

\begin{corollary}\label{cor:embed} 
	Let $\bt$ be a decorated tree with $w(\bt) >1$. Then, for every seed $S$, we have $$ n^{w(\bt)/2} \ll \Es{D_{\bt}(T_{n})} \ll n^{w(\bt)/2}.$$
\end{corollary}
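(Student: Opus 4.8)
The plan is to reduce everything to the deterministic recurrence satisfied by $m_{\bt}(n) := \Es{D_{\bt}(T_n)}$. Taking expectations in the identity of Lemma~\ref{lem:espcond} yields, for $w(\bt) > 1$ and $n \geq n_0 := |S|$,
$$ m_{\bt}(n+1) = \left(1 + \frac{w(\bt)}{2n-2}\right) m_{\bt}(n) + \frac{1}{2n-2}\sum_{\btp \prec \bt} c(\bt,\btp)\, m_{\btp}(n), $$
where all the coefficients $c(\bt,\btp)$ are nonnegative. Both bounds will follow by analysing this first-order linear recurrence via the integrating factor $\pi(n) := \prod_{k=n_0}^{n-1}\bigl(1 + \tfrac{w(\bt)}{2k-2}\bigr)$, which satisfies $\pi(n) \asymp n^{w(\bt)/2}$ (compare $\log\pi(n)$ with $\tfrac{w(\bt)}{2}\sum_k \tfrac1k$, or telescope into a ratio of $\Gamma$-functions and apply Stirling).

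For the lower bound I would simply drop the nonnegative source term, so that $m_{\bt}(n+1) \ge \bigl(1+\tfrac{w(\bt)}{2n-2}\bigr) m_{\bt}(n)$ and hence $m_{\bt}(n)/\pi(n)$ is nondecreasing. Since for $n$ large enough $T_n^{(S)}$ contains, with positive probability, an embedded copy of $\tau$ all of whose images have degree at least $\max_{u}\ell(u)$, one has $D_{\bt}(T_n) > 0$ on that event and therefore $m_{\bt}(n_*) > 0$ for some $n_*$. Monotonicity then gives $m_{\bt}(n) \ge \pi(n)\, m_{\bt}(n_*)/\pi(n_*) \gg n^{w(\bt)/2}$ for $n \geq n_*$.

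The upper bound is the substantial part, and I would run a strong induction on $\bt$ for the order $\prec$, which is well-founded since $\{\btp : \btp \prec \bt\}$ is finite. Dividing the recurrence by $\pi(n+1)$ and summing telescopically gives
$$ \frac{m_{\bt}(n)}{\pi(n)} = m_{\bt}(n_0) + \sum_{k=n_0}^{n-1}\frac{1}{\pi(k+1)(2k-2)}\sum_{\btp\prec\bt} c(\bt,\btp)\, m_{\btp}(k). $$
I would then bound each $\btp$-contribution separately. For $\btp \prec \bt$ with $w(\btp) > 1$, the inductive estimate $m_{\btp}(k) \ll k^{w(\btp)/2}$ together with $\pi(k+1) \asymp k^{w(\bt)/2}$ turns the $k$-th summand into a quantity $\ll k^{(w(\btp)-w(\bt))/2 - 1}$ up to polylog factors. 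For the atom $\text{\ding{172}}$, whose exact value $m_{\text{\ding{172}}}(k)=2k-2 \asymp k$ grows faster than its nominal $k^{1/2}$, the summand is instead $\asymp k^{-w(\bt)/2}$. The decisive observation is that in every case the resulting series is summable to a polylog: since $\btp \prec \bt$ forces $w(\btp) \le w(\bt)$, the exponent $(w(\btp)-w(\bt))/2 - 1$ is $\le -\tfrac32$ when $w(\btp) < w(\bt)$ (a bounded contribution) and equals $-1$ exactly when $w(\btp) = w(\bt)$, i.e. $|\btp| < |\bt|$ (a $\log n$ contribution); and $\sum_k k^{-w(\bt)/2} \ll \log n$ because $w(\bt) \ge 2$. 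Hence $m_{\bt}(n)/\pi(n) \ll \log(n)^{\gamma}$ for some $\gamma$, so $m_{\bt}(n) \ll n^{w(\bt)/2}$, the polylog being harmlessly absorbed by $\ll$.

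The main obstacle is really a matter of care rather than depth: it lies in the boundary case $w(\btp) = w(\bt)$ and in the anomalous atom $\text{\ding{172}}$, whose effective growth $k$ beats the nominal $k^{1/2}$ — this last point being precisely why the statement must exclude $w(\bt)=1$. One must check that the logarithmic corrections these terms generate accumulate only into polylog factors, which is guaranteed because at each inductive step there are finitely many predecessors $\btp$ and the recursion compounds only boundedly many logarithms. The flexibility of the relation $\ll$, which tolerates arbitrary powers of $\log n$, is exactly what pins the exponent of $n$ at $w(\bt)/2$ and lets the induction close.
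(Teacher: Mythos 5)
Your proof is correct and follows essentially the same route as the paper's: take expectations in Lemma~\ref{lem:espcond}, drop the nonnegative source term for the lower bound, and induct on $\prec$ with the integrating factor $\prod_j(1+\tfrac{w(\bt)}{2j-2})\asymp n^{w(\bt)/2}$ for the upper bound. You merely spell out two details the paper leaves implicit — the positivity of $\Es{D_{\bt}(T_{n_*})}$ at some finite time, and the case analysis showing the telescoped sum only accumulates polylogarithmic factors — and both are handled correctly.
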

\begin{proof}[Proof of Corollary~\ref{cor:embed}]The first bound is immediate because Lemma~\ref{lem:espcond} implies that, for $\bt$ with $w(\bt) > 1$, 
	$$ \Es {D_{ \bt} \big( T_{n} \big)} \geq
	C \cdot \prod_{j=|S|}^{n-1}\left( 1+ \frac{w(\bt)}{2j-2} \right) \geq C' \cdot n^{w(\bt)/ 2},$$
	for constants $C,C' >0$ depending on $S$ and $\bt$.
	
	We prove the second bound by induction on $\bt$ (for the partial order $ \preceq$). 
	Fix $\bt$ with $w(\bt) >1$ and assume that
	$\Es{D_{\bt'}(T_{n})} \ll n^{w(\bt')/2}$ for every $\bt' \prec \bt$ with $w(\bt') >1$. Since $D_{\text{\ding{172}}}(T_{n}) = 2n-2$, it follows that $\Es{D_{\bt'}(T_{n})} \ll n^{w(\bt)/2}$ for every $\bt' \prec \bt$. Then, by Lemma~\ref{lem:espcond}, there exist constants $C, \gamma>0$ such that 
	$$ \Es {D_{ \bt} \big( T_{n+1} \big)   } 
	\leq  \left( 1+ \frac{w(\bt)}{2n-2} \right)  \cdot \Es{D_{ \bt} \big( T_{n} \big)} +
	C \cdot ( \log n)^ {\gamma} \cdot n^{\frac{w(\bt)}{2} -1} .
	$$
	It is then a simple matter to show that this implies $\Es{D_{ \bt} \big( T_{n} \big)}  \ll n^{w(\bt)/2}$.
\end{proof}

We now turn to second  moment estimates on $D_{\bt}(T_n)$ which will be useful in the proof of Proposition~\ref{prop:martingales}.

\begin{lemma}\label{lem:embed3}	 
	Let $ \bt$ be a decorated tree with $w(\bt) >1$. Then
	\begin{enumerate}[(i)]
 	\item $ \Es{D_{\bt}(T_{n})^2} \ll n^{w(\bt)}$,
 	\item $ \Es{ \big(D_{\bt}(T_{n+1})-D_{\bt}(T_{n}) \big)^2} \ll n^{-3/2} \cdot n^{w(\bt)}$.
  	\end{enumerate}
\end{lemma}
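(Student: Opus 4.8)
The plan is to prove both bounds by induction on $\bt$ for the partial order $\preccurlyeq$, exactly as was done for the first-moment estimate in Corollary~\ref{cor:embed}, but now tracking the square $D_{\bt}(T_n)^2$ and using its own recurrence relation. The starting point is the Doob-type decomposition
$$ D_{\bt}(T_{n+1})^2 = \left( D_{\bt}(T_n) + \Delta_n \right)^2 = D_{\bt}(T_n)^2 + 2 D_{\bt}(T_n) \Delta_n + \Delta_n^2, $$
where $\Delta_n = D_{\bt}(T_{n+1}) - D_{\bt}(T_n)$. Taking conditional expectation given $\mathcal{F}_n$ and inserting the recurrence \eqref{eq:rec} for $\Es{\Delta_n \mid \mathcal{F}_n}$ produces a recursion of the form
$$ \Es{ D_{\bt}(T_{n+1})^2 \mid \mathcal{F}_n } = \left(1 + \frac{w(\bt)}{n-1}\right) D_{\bt}(T_n)^2 + (\text{cross terms}) + \Es{\Delta_n^2 \mid \mathcal{F}_n}, $$
where the multiplicative factor comes from squaring $(1 + \frac{w(\bt)}{2n-2})$ and keeping the leading order. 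The cross terms are products $D_{\bt}(T_n) \cdot D_{\btp}(T_n)$ with $\btp \prec \bt$, which I would control by Cauchy--Schwarz together with the inductive hypothesis and the first-moment bounds of Corollary~\ref{cor:embed}.

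**Next I would** establish part $(ii)$ first, since it feeds directly into part $(i)$. The key observation is that $\Delta_n = D_{\bt}(T_{n+1}) - D_{\bt}(T_n)$ counts decorated embeddings of $\bt$ that use at least one of the two new corners $c_n', c_n''$ created at step $n$, as explained in the decorated-embedding sketch following Lemma~\ref{lem:espcond}. Such an embedding must place an arrow in the freshly grafted corner $c_n$, which is uniform among the $2n-2$ corners of $T_n$; consequently $\Delta_n$ is bounded by a sum of numbers of decorated embeddings of trees $\bs \preccurlyeq \bt$ in $T_n$ that are forced to use the uniform corner $c_n$. The square $\Delta_n^2$ then expands into products of two such constrained embedding-counts, and taking expectation over the uniform choice of $c_n$ gains a factor of order $n^{-1}$ from \emph{each} of the two arrows being pinned to the same uniform corner, except that the two embeddings share the corner $c_n$, so one only gains a single factor $\frac{1}{2n-2}$ overall from the shared constraint, combined with a further $\frac{1}{2n-2}$-type gain reflecting that both embeddings touch the new vertex. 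Heuristically this yields $\Es{\Delta_n^2} \ll n^{-2} \cdot n^{w(\bt)}$; but the subtlety is that the trees $\bs$ appearing have weight up to $w(\bt)-1$, so using part $(i)$ for these smaller trees (available by induction) gives the sharper exponent $n^{-3/2} \cdot n^{w(\bt)}$ claimed in $(ii)$, the extra half-power coming from the typical degree $\sqrt{n}$ of the vertex carrying the pinned arrow.

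**Then** part $(i)$ follows by unrolling the recursion for $\Es{D_{\bt}(T_n)^2}$: the homogeneous solution of $a_{n+1} = (1 + \frac{w(\bt)}{n-1}) a_n$ grows like $n^{w(\bt)}$, and one checks by the standard variation-of-constants argument (as in the proof of Corollary~\ref{cor:embed}) that the inhomogeneous contributions from the cross terms and from $\Es{\Delta_n^2 \mid \mathcal{F}_n}$ are of lower or equal order, namely $\ll n^{w(\bt)-1}$ summed against the growth factor, hence do not alter the leading exponent up to logarithmic factors. The base cases $\bt = \text{\ding{172}}$ (where $D_{\text{\ding{172}}}(T_n) = 2n-2$ is deterministic, so both claims are trivial) and the minimal $\bt$ with $w(\bt) > 1$ are handled directly.

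**The main obstacle** I anticipate is the careful bookkeeping of $(ii)$: correctly identifying which decorated trees $\bs$ arise in the expansion of $\Delta_n$, verifying that they satisfy $\bs \prec \bt$ so the inductive hypothesis of $(i)$ applies to them, and—crucially—extracting the exponent $-3/2$ rather than the naive $-1$. This extra half-power is precisely the quantitative manifestation of the $\sqrt{n}$ maximal-degree phenomenon, and getting it requires using the first-moment bound of Corollary~\ref{cor:embed} on a tree of weight $w(\bt)-1$ (contributing $n^{(w(\bt)-1)/2}$) together with a second constrained embedding-count, balancing the two so their product, after the uniform-corner averaging, lands on $n^{w(\bt) - 3/2}$. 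I would therefore treat $(ii)$ with the decorated-embedding formalism rather than the algebraic recurrence, since the geometric picture makes the corner-sharing constraints—and hence the power gains—transparent.
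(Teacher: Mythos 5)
Your geometric picture for part (ii) --- pinning an arrow to the uniform corner $c_n$ to gain a factor $\frac{1}{2n-2}$ and paying a first-moment bound for what remains --- is exactly the mechanism the paper uses, and your final paragraph lands on the correct balance $n^{-1}\cdot n^{w(\bt)-1/2}=n^{w(\bt)-3/2}$. However, the bookkeeping in between does not hold together. When you expand $\big(D_{\bt}(T_{n+1})-D_{\bt}(T_n)\big)^2$ as a sum over \emph{pairs} of decorated embeddings each using a new corner, the two images necessarily overlap at $v_n$ (this is where $w(\bt)>1$ enters), and merging them produces a decorated embedding of an overlap tree $\bs'$ of weight up to $2w(\bt)-1$ --- not ``up to $w(\bt)-1$'' as you write. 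These overlap trees are \emph{larger} than $\bt$ for the order $\prec$, so the inductive hypothesis of part (i) cannot be applied to them; what controls them is the first-moment bound of Corollary~\ref{cor:embed}, which holds for all decorated trees. Your sentence ``heuristically this yields $n^{-2}\cdot n^{w(\bt)}$, but \dots\ gives the sharper exponent $n^{-3/2}$'' is also backwards: $n^{-3/2}\cdot n^{w(\bt)}$ is a \emph{weaker} bound than $n^{-2}\cdot n^{w(\bt)}$, so nothing needs to be ``sharpened'' --- rather, only one factor of $\frac{1}{2n-2}$ is available (the two pinned arrows share the single uniform corner $c_n$), and the remaining $n^{-1/2}$ comes precisely from $w(\bs')\le 2w(\bt)-1$ rather than $2w(\bt)$.

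For part (i) your route (Doob decomposition of $D_{\bt}(T_n)^2$, the recurrence \eqref{eq:rec} for the drift, then unrolling) is genuinely different from the paper's, which proves (i) directly and with no recursion on squares: it writes $D_{\bt}(T_n)^2$ as the number of decorated maps of two disjoint copies of $\bt$ into $T_n$, and splits according to whether the images are disjoint (giving $D_{\bt,\bt}(T_n)$, controlled by Lemma~\ref{lem:embed2}) or overlapping (giving embedding counts of overlap trees of weight at most $2w(\bt)$, controlled by Corollary~\ref{cor:embed}); in particular the paper's (i) does not rely on (ii). Your route could be made to work, but it has an unaddressed circularity: the cross terms $\frac{1}{n}\,\Es{D_{\bt}(T_n)\,D_{\btp}(T_n)}$ with $\btp\prec\bt$ cannot be handled by ``Cauchy--Schwarz together with the inductive hypothesis and the first-moment bounds,'' because Cauchy--Schwarz reintroduces $\Es{D_{\bt}(T_n)^2}^{1/2}$, the very quantity being bounded, and some $\btp\prec\bt$ occurring in \eqref{eq:rec} satisfy $w(\btp)=w(\bt)$, so the resulting Gronwall-type inequality only closes after a genuine bootstrap (absorbing an extra $(\log n)^2$ into the $\ll$). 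Either supply that bootstrap explicitly, or adopt the paper's overlap decomposition, which sidesteps the issue entirely.
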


To establish these results, we will need to estimate the number of embeddings in $T_n$ of pairs of decorated trees. 
If $\bt$ and $ \btp$ are decorated trees, set
\begin{equation}\label{eq:Dtt} 
	D_{\bt,\btp}(T)= \sum_{ \phi} \prod_{u \in \tau \sqcup \tau'} [\phi(u)]_{ \ell(u)},
\end{equation} 
where the sum is taken over all injective graph homomorphisms from $ \tau \sqcup \tau'$ to $T$ 
(in particular, $ \phi(u) \neq \phi(u')$ if $u \in \tau$ and $ u' \in \tau'$).

\begin{lemma}\label{lem:embed2}  
	Let $ \bt,\bt'$ be two decorated trees. 
	Then $\Es{D_{\bt,\bt'}(T_{n})} \ll \Es{D_{\bt}(T_{n})} \cdot \Es{D_{\bt'}(T_{n})}$.
\end{lemma}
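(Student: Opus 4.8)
The plan is to recognise $D_{\bt,\btp}(T)$ as the \emph{forest analogue} of $D_{\bt}(T)$ and to extend the machinery of Lemma~\ref{lem:espcond} and Corollary~\ref{cor:embed} from trees to forests. Write $\mathbf{F} = \bt \sqcup \btp$ for the decorated forest obtained by juxtaposing $\bt$ and $\btp$. An injective graph homomorphism $\tau \sqcup \tau' \to T$ is exactly a pair of embeddings of $\tau,\tau'$ with disjoint images, so $D_{\bt,\btp}(T) = D_{\mathbf{F}}(T)$ once we read the definition of $D$ verbatim on forests. Since $w(\mathbf{F}) = w(\bt) + w(\btp)$, the statement will follow once $\Es{D_{\mathbf{F}}(T_n)}$ is controlled by the right power of $n$ and this power is matched, through the two-sided estimate of Corollary~\ref{cor:embed}, against the product $\Es{D_{\bt}(T_n)}\cdot\Es{D_{\btp}(T_n)}$.

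First I would record the forest version of the recurrence of Lemma~\ref{lem:espcond}. Its proof transfers almost word for word: splitting embeddings of $\mathbf{F}$ into $T_{n+1}$ according to whether they avoid or hit the new vertex $u_{n+1}$, the relations $(\star),(\star\star),(\star\star\star)$ are local around vertices and insensitive to the forest (as opposed to tree) structure. The one genuinely new phenomenon --- and the main obstacle --- concerns components of $\mathbf{F}$ that are isolated vertices carrying label $1$, i.e.\ copies of $\text{\ding{172}}$, the unique decorated tree of weight $1$. Such a component $w$ may be mapped onto the new leaf $u_{n+1}$, and a short computation using $(\star)$ and $(\star\star)$ shows that the corresponding embeddings contribute
\[
	\Big(1 + \tfrac{w(\mathbf{F})-1}{2n-2}\Big)\, D_{\mathbf{F} \setminus \{w\}}(T_n)
	\;+\; \frac{1}{2n-2}\sum_{w' \neq w} \ell(w')(\ell(w')-1)\, D_{(\mathbf{F} \setminus \{w\})_{w'}}(T_n),
\]
the crucial point being the $\Theta(1)$ coefficient in front of $D_{\mathbf{F}\setminus\{w\}}(T_n)$, in contrast with the $\Theta(1/n)$ coefficients of Lemma~\ref{lem:espcond}. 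An isolated vertex with label $\geq 2$ produces no such term, the new leaf having degree $1$. All other terms are as in Lemma~\ref{lem:espcond} and involve forests $\mathbf{G} \prec \mathbf{F}$ with coefficients of order $1/n$.

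Second, I would run the induction of Corollary~\ref{cor:embed} on this recurrence. For a decorated forest with components $\bs_1,\dots,\bs_k$, set $E = \sum_i e(\bs_i)$ where $e(\text{\ding{172}}) = 1$ and $e(\bs) = w(\bs)/2$ when $w(\bs) > 1$; then $E(\mathbf{F}) \geq w(\mathbf{F})/2$, with strict inequality exactly when $\mathbf{F}$ has a $\text{\ding{172}}$ component, and $E(\bt)+E(\btp) = E(\mathbf{F})$. By induction on $\mathbf{F}$ for the order $\prec$ one shows $\Es{D_{\mathbf{F}}(T_n)} \ll n^{E(\mathbf{F})}$: every $\mathbf{G} \prec \mathbf{F}$ occurring in the recurrence satisfies $E(\mathbf{G}) \leq E(\mathbf{F})$, so the $\Theta(1/n)$ terms contribute $\ll n^{E(\mathbf{F})-1}$, while each isolated $\text{\ding{172}}$ component yields the source $\Theta(1)\cdot D_{\mathbf{F}\setminus\{w\}}$ with $E(\mathbf{F}\setminus\{w\}) = E(\mathbf{F})-1$, hence again $\ll n^{E(\mathbf{F})-1}$. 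Feeding $\Es{D_{\mathbf{F}}(T_{n+1})} \leq \big(1 + \tfrac{w(\mathbf{F})}{2n-2}\big)\Es{D_{\mathbf{F}}(T_n)} + O\big((\log n)^{\gamma} n^{E(\mathbf{F})-1}\big)$ into the elementary recurrence already solved in Corollary~\ref{cor:embed} gives $\Es{D_{\mathbf{F}}(T_n)} \ll n^{\max(w(\mathbf{F})/2,\,E(\mathbf{F}))} = n^{E(\mathbf{F})}$; the boosted source is precisely what lifts the exponent from $w(\mathbf{F})/2$ to $E(\mathbf{F})$ when $\text{\ding{172}}$ components are present.

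Finally I would assemble the pieces. The lower bound of Corollary~\ref{cor:embed} together with $D_{\text{\ding{172}}}(T_n) = 2n-2$ gives $n^{E(\bt)} \ll \Es{D_{\bt}(T_n)}$ and $n^{E(\btp)} \ll \Es{D_{\btp}(T_n)}$, whence $n^{E(\mathbf{F})} = n^{E(\bt)+E(\btp)} \ll \Es{D_{\bt}(T_n)}\cdot\Es{D_{\btp}(T_n)}$. Chaining this with $\Es{D_{\bt,\btp}(T_n)} = \Es{D_{\mathbf{F}}(T_n)} \ll n^{E(\mathbf{F})}$ and the transitivity of $\ll$ yields $\Es{D_{\bt,\btp}(T_n)} \ll \Es{D_{\bt}(T_n)}\cdot\Es{D_{\btp}(T_n)}$. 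I expect the only delicate points to be the derivation of the $\Theta(1)$ source term attached to $\text{\ding{172}}$ components and the verification that $E(\mathbf{G}) \leq E(\mathbf{F})$ for all forests $\mathbf{G}$ arising in the recurrence (the operations that create a $\text{\ding{172}}$ by reducing a label must be checked, since there $e$ jumps from $1/2$ to $1$); everything else is a routine transcription of the tree case.
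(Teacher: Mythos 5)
Your proposal is correct and follows the same overall strategy as the paper's (sketched) proof: read $D_{\bt,\btp}$ as a forest observable, derive a recurrence in the spirit of Lemma~\ref{lem:espcond}, and close an induction as in Corollary~\ref{cor:embed}. Where you genuinely go beyond the paper is in the treatment of isolated components equal to $\text{\ding{172}}$: you correctly observe that when such a component is sent to the new leaf $u_{n+1}$, the resulting source term carries a $\Theta(1)$ coefficient in front of $D_{\mathbf{F}\setminus\{w\}}(T_n)$, whereas the recurrence displayed in the paper's sketch attaches a uniform $\tfrac{1}{2n-2}$ to every lower-order term. Read literally, that displayed recurrence fails in this case and would even yield the false bound $\Es{D_{\text{\ding{172}},\text{\ding{172}}}(T_n)} \ll n$, while the correct order is $n^2$ since $D_{\text{\ding{172}},\text{\ding{172}}}(T_n)=(2n-2)^2-\sum_x \deg(x)^2$; and the case cannot be excluded from the induction, because reducing the label of a single vertex labelled $2$ already produces $\text{\ding{172}}$. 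Your exponent $E(\mathbf{F})=\sum_i e(\bs_i)$ with $e(\text{\ding{172}})=1$ and $e(\bs)=w(\bs)/2$ otherwise is exactly what is needed: it absorbs the boosted source (since $E(\mathbf{F}\setminus\{w\})=E(\mathbf{F})-1$), it is additive over components, and it matches the lower bounds $n^{E(\bt)}\ll\Es{D_{\bt}(T_n)}$ coming from Corollary~\ref{cor:embed} and from $D_{\text{\ding{172}}}(T_n)=2n-2$, so the final chaining is sound. The monotonicity $E(\mathbf{G})\le E(\mathbf{F})$ that you flag as the remaining delicate point does hold: in every label-reducing or leaf-removing operation the value of $e$ on the affected component is non-increasing (the only ways to create a $\text{\ding{172}}$ start from a component with $e\ge 1$). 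In short: same route as the paper, executed with a correction that the paper's sketch actually needs.
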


\begin{proof}[Sketch of proof of Lemma~\ref{lem:embed2}]  
	The proof follows the same lines as that of Lemma~\ref{lem:espcond} and Corollary~\ref{cor:embed}. 
	For this reason, we only lay out the main steps without giving additional detail. 
	As in Lemma~\ref{lem:espcond}, one starts by writing a recurrence relation for $\Es{D_{\bt,\bt'}(T_{n})}$ of the following form: 		
	\begin{eqnarray*}
		\Es {D_{ \bt} \big( T_{n+1}\big)  \big | \mathcal{F}_{n} }
			&=& \left( 1 + \frac{w(\bt) + w(\bt')}{2n-2} \right)  D_{ \bt, \bt'} \big( T_{n}\big) \\
			&& + \frac{1}{2n-2} \left( \sum_{\bs\prec \bt} c(\bt, \bt', \bs) D_{\bs,\bt'} \big( T_{n}\big) +
			  \sum_{\bs' \prec \bt'} c'(\bt, \bt', \bs') D_{\bt, \bs'} \big( T_{n}\big) \right),
	\end{eqnarray*}
	for certain nonnegative real numbers $c(\bt, \bt', \bs), c'(\bt, \bt', \bs) $ and $n \geq |S|$. 
	We stress that in the previous equation, the decorated trees $\bs$ and $\bs'$ may also take the value $\emptyset$,
	in which case $D_{\bs,\bt'} \big( T_{n}\big)$ and $D_{\bt, \bs'} \big( T_{n}\big) $
	are equal to respectively $D_{\bt'} \big( T_{n}\big)$ and $D_{\bt} \big( T_{n}\big) $. 
	The same inductive argument as that of Corollary~\ref{cor:embed} leads to the conclusion. 
\end{proof}

\begin{proof}[Proof of Lemma~\ref{lem:embed3}] 
To simplify the proof, we use a planar embedding of $T_{n}$ and the interpretation of $D_{\bt}(T_{n})$ 
as the number of decorated embeddings of $ \bt$ in $T_{n}$, as explained after the statement of Proposition~\ref{prop:martingales}. Let $ \bt'$ be a disjoint copy of $ \bt$. By definition, a decorated map $ \bphi : \bt  \cup \bt' \rightarrow T_{n}$ is a map such that both $ \bphi |_{ \bt}$  and $ \bphi |_{ \bt'}$ are decorated embeddings. We insist on the fact that $ \bphi$ is not necessarily injective.   If $ \bphi$ is a decorated embedding or a decorated map, $ \phi$ will denote the map without the choice of corners associated with arrows.

For the first assertion, observe that 
$D_{\bt}(T_{n})^2$ is the number of decorated maps $ \bphi : \bt \cup \btp \rightarrow T_{n}$.
We denote by $ \mathcal{E}^{1}_{\bt}(T_{n})$ 
the set of all such decorated maps with $ \phi( \tau) \cap \phi( \tau')= \emptyset$ 
(as in the definition of $D_{\bt, \bt}(T_{n})$), 
and by $ \mathcal{E}^{2}_{\bt}(T_{n})$ the set of all such decorated maps with 
$\phi( \tau) \cap \phi( \tau') \neq \emptyset$. 
The cardinality of $ \mathcal{E}^{1}_{\bt}(T_{n})$ is $D_{\bt, \bt}(T_{n})$, and Lemma~\ref{lem:embed2} applies.

If $ \bphi \in \mathcal{E}^{2}_{\bt}(T_{n})$ is a decorated map, we may associate with $ \bphi$ a decorated embedding $\bphi '$ 
of a decorated tree $\bs_{\bphi}$ obtained by overlapping two copies of $\bt$. 
More precisely, let  $\mathcal{U}_2(\bt)$ be the set of all decorated trees 
which may be obtained by identifying a non-empty subset of elements (i.e. of vertices, edges and arrows) of $\bt$ and $\btp$. 
The embedding associated with $\bphi$ is given by the images of $\bt$ and $\bt'$ in $T_n$ via $\bphi$
(in particular $\bs_{\bphi}$ is the union of the images of $\bt$ and $\bt'$), see Figure \ref{fig:embed}.

Note that the function $\bphi \mapsto  (\bs_{\bphi},\bphi')$ defined above is not one to one, 
since an element of $\mathcal{U}_2(\bt)$ may be obtained in several ways  by overlapping $\bt$ and $\btp$. 
However, it is easy to see that there exists a constant $C( \bt)>0$ 
such that any decorated tree $ \bs \in \mathcal{U}_2(\bt)$ and any embedding $\bphi'$ of $\bs$ in $T_n$
is associated with at most $C( \bt)$ decorated maps $\bphi$. 
We may therefore conclude that
\begin{eqnarray*}
D_{\bt}(T_{n})^2
= \# \mathcal{E}^{1}_{\bt}(T_{n})+ \# \mathcal{E}^{2}_{\bt}(T_{n})
& \leq & D_{\bt, \bt}(T_{n}) + C(\bt) \cdot \sum_{\bs \in \mathcal{U}_2(\bt)} D_{\bs}(T_{n}).
\end{eqnarray*}
Observe that $w(\bs) \leq 2 \cdot w(\bt)$ for every $\bs \in \mathcal{U}_2(\bt)$. 
Lemma~\ref{lem:embed2}, Corollary~\ref{cor:embed} and the fact that $\mathcal{U}_2(\bt)$ is a finite set imply the desired bound. 

For the second assertion, we work conditionally on $ \mathcal{F}_{n}$.  As in the discussion after the proof of Lemma~\ref{lem:espcond},
let $u_{n+1}$ be the vertex added to $T_n$ in the transition from $T_{n}$ to $T_{n+1}$,
and $c_n''$ be the corner adjacent to $u_{n+1}$ in $T_{n+1}$ (since $u_{n+1}$ is a leaf of  $T_{n+1}$, there is only one corner adjacent to it). 
Also let $c_n$ be the corner of $T_n$ in which the additional edge of $T_{n+1}$ is grafted,
and denote by $c_n$ and $c_n'$ the two corners of $T_{n+1}$ resulting from  splitting  $c_n$.  
Finally let $v_n$, be the vertex adjacent to $c_n$. We refer to Fig.~\ref{fig:embed} for an example.
\begin{figure}[!h]
 \begin{center}
 \includegraphics[width=0.9 \linewidth]{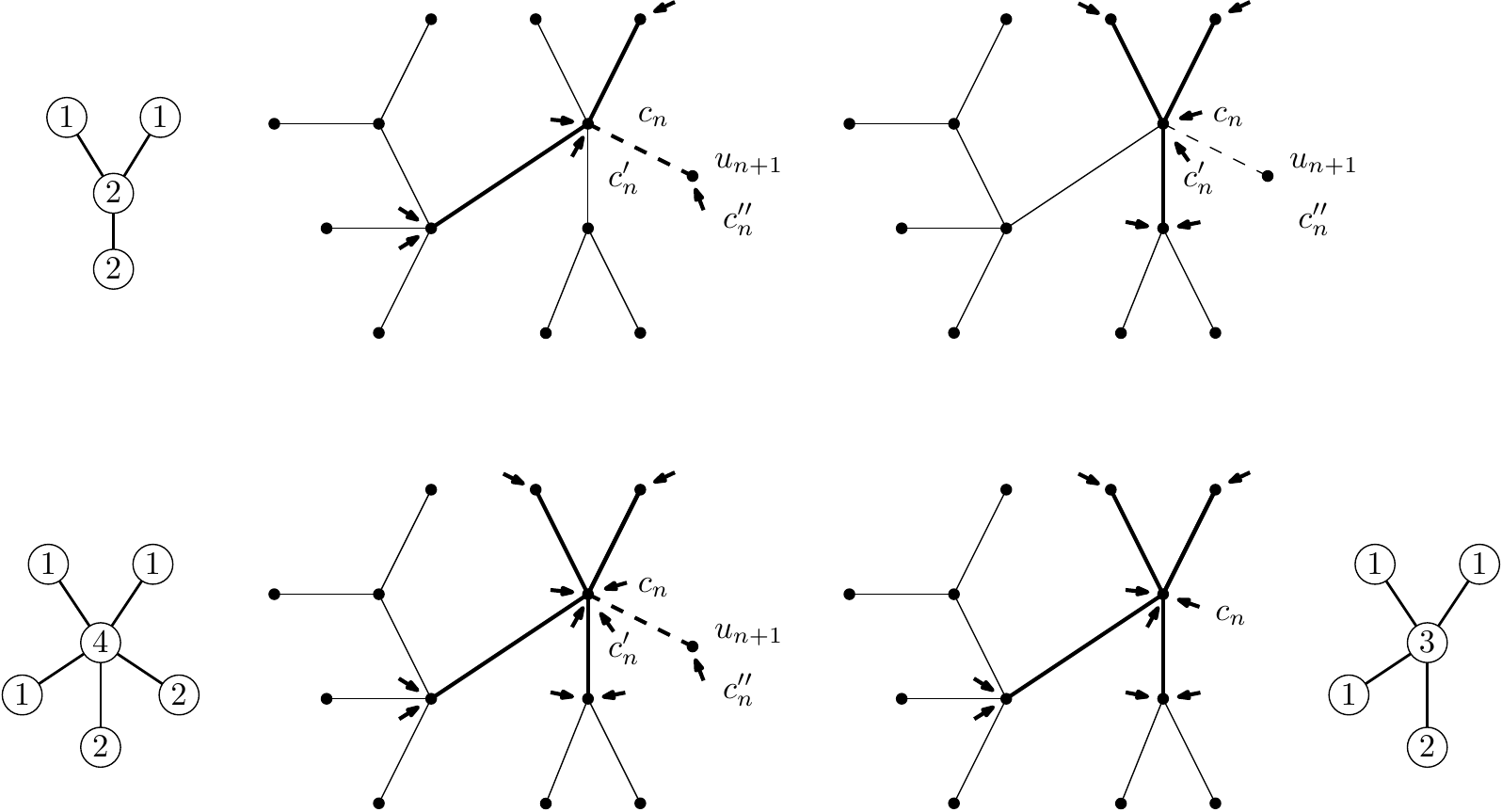}
 \caption{\emph{Top diagram:} Two decorated embeddings $ \bphi_{1}$ and $ \bphi_{2}$ of the decorated tree $\bt$ in $T_{n+1}$.
 Both embeddings use corners of $T_{n+1}$ not present in $T_n$, hence contribute to $D_{\bt}(T_{n+1})-D_{\bt}(T_{n})$.
 \emph{Bottom diagram:} The overlapping of the two decorated embeddings $ \bphi_{1}$ and $ \bphi_{2}$ of $\bt$ induces a decorated embedding of the decorated tree $\bs$ depicted on the left. 
 To $\bs$ we associate the embedding of $\bs'$ in $T_n$ (right diagrams).}
 \label{fig:embed}
 \end{center}
 \end{figure}
 
Note that
$D_{\bt}(T_{n+1})-D_{\bt}(T_{n})$ is the number of decorated embeddings of $\bt$ in $T_{n+1}$ 
that use at least one of the corners $c_n', c_n''$. 
Similarly, $\left(D_{\bt}(T_{n+1})-D_{\bt}(T_{n})\right)^2$ 
is the number of decorated maps $ \bpsi : \bt \cup \btp \rightarrow T_{n+1}$ such that  $ \bpsi |_{ \bt}$  and $ \bpsi |_{ \btp}$ 
both use at least one of the corners $c_n', c_n''$. 
To simplify notation, denote by $ \mathcal{E}_{\bt}(T_{n+1})$ the set of all such decorated maps.
Fix $ \bpsi \in \mathcal{E}_{\bt}(T_{n+1})$.  Since $w ( \bt)>1$, we have $v_{n} \in \psi( \tau)$ and $v_{n} \in \psi( \tau')$. 
As in the proof of (i), we may associate with the decorated map $ \bpsi$ a decorated embedding 
$ \bphi : \bs \rightarrow T_{n+1}$ of a decorated tree $\bs \in \mathcal{U}_2(\bt)$. 
We shall now furthermore associate with $\bphi$ a decorating embedding $\bphi': \bs' \rightarrow T_{n}$ of a modified decorated tree $\bs'$ in $T_{n}$.

Let $w$ be the vertex of $\bs$ such that $ \phi(w)=v_{n}$.
Define the modified decorated tree $\bs'$  by altering $\bs$ as follows. 
Remove from $\bs$ the vertex mapped by $\bphi$ to $u_{n+1}$ as well as the arrow pointing to it, if such a vertex exists.
Remove in addition the arrows of $\bs$ mapped by $\bphi$ to $c_n$ or $c_n'$. 
Finally add an arrow $ \vec{a}$ to $\bs'$ pointing to $w$. 
The decorated embedding $\bphi' : \bs' \rightarrow T_{n}$ is defined 
to be equal to $\bphi$ on $\bs \cap \bs'$, 
and maps the arrow $\vec{a}$ of $\bs'$ on the corner $c_n$. 

To sum up, with every decorated map $\bpsi \in \mathcal{E}_{\bt}(T_{n+1})$, 
we have associated a decorated tree $ \bs'$ with a distinguished arrow $\vec{a}$ and a decorated embedding $\bphi$ of $\bs'$ in $T_{n}$.
Moreover, we have done this in such a way that $\bphi(\vec{a})=c_{n}$. 
In addition,  $\bs'$ satisfies $w(\bs') \leq 2  w(\bt) - 1$ and $|\bs'| \leq 2|\bt| -1 $.
As in (i), this association is not injective but the number of pre-images of any given image may be bounded by a constant $C(\bt)>0$. 
Hence 
\begin{equation*}
\big(D_{\bt}(T_{n+1})-D_{\bt}(T_{n})\big)^2
\leq
\sum_{\substack{\bs' ;  |\bs'| < 2|\bt|, \\ w(\bs') < 2  w(\bt)}} 
\sum_{ \vec {a} \in  \bs'}
\sum_{ \bphi : {\bs'} \rightarrow T_{n} }
 C(\bt) \cdot  \mathbbm {1}_{ \{ \bphi( \vec {a})=c_{n}\} },
\end{equation*}
where the first sum is taken on decorated trees  $\bs'$ with the displayed constraints. 
Since  the corner $c_n$ is uniform in $T_n$, we obtain 
$$
\Es{\big(D_{\bt}(T_{n+1})-D_{\bt}(T_{n})\big)^2 \big| \mathcal{F}_n}
\leq
\sum_{\substack{\bs' ;  |\bs'| < 2|\bt|, \\ w(\bs') < 2  w(\bt)}} 
\sum_{ \vec {a} \in  \bs'}
\frac{C(\bt)}{2n-2} \cdot D_{\bs'}(T_n)
\ll n^{w(\bt) - 3/2}.
$$
For the last estimate, we have used the fact that  $D_{\bs'}(T_n) \ll n^{w(\bt) - 1/2}$ for all values of $\bs'$ 
(see Corollary~\ref{cor:embed}) and that the number of terms in the sum is bounded in terms of $\bt$ only. 
Taking the expectation of the expression above leads to the desired inequality. 
\end{proof}

\subsection{Constructing martingales}

We now use the recurrence relation~\eqref{eq:rec} in order to construct the martingales of Proposition~\ref {prop:martingales}. 
It may be instructive for the reader to compute the martingale $M^{(S)}_{\bt}(n)$ using~\eqref{eq:rec}
in some simple cases, for instance when $\bt$ is formed of a single vertex with label $2$ or $3$, 
or two vertices of label $1$ linked by an edge.

\begin{proof}[Proof of Proposition~\ref{prop:martingales}] 
 Fix a seed tree $S$ with $|S|=n_{0}\geq 2$. For a decorated tree $\bt$ and $n \geq 2$, set
$$\alpha^{\bt}_n =  \prod_{j = 2}^{n-1} \left(1 + \frac{w(\bt)}{2j-2}\right)^{-1}, \textrm{ when } w(\bt) > 1; 
\qquad  \alpha^{\bt}_n = \frac{1}{2n-2},  \textrm{ when } w(\bt)=1.$$
In particular, if $w( \bt)>1$, observe that $ n^{- w(\bt)/2} \ll \alpha^{\bt}_n \ll  n^{- w(\bt)/2}  $ .
For a sequence of real numbers $(a_n)_{n\geq 0}$  set $\Delta_n a = a_{n+1} - a_n$ for $n \geq 0$. 

We start by constructing by induction (on the order $\prec$ on decorated trees)  
coefficients $\{a_n( \bt, \bt'):  \bt' \prec \bt, n \geq n_0\}$ such that   
\begin{eqnarray} 
&& a_n( \bt, \bt') \ll 1, \qquad    \qquad \Delta_n a( \bt, \bt') \ll 1/n  \label{eq:petit} \\ 
\label{eq:defmart}
	\mbox{ and } \qquad M^{(S)}_{\bt}(n) &=& 
	\alpha_n^{\bt} \left( D_{\bt}(T_{n}^{(S)}) {-} \sum_{\bt' \prec \bt} a_n(\bt, \bt') \cdot D_{\bt'}(T_{n}^{(S)}) \right) 
	\qquad \mbox{is a martingale}.
 \end{eqnarray}
We emphasize that by construction, the coefficients $a_n(\bt, \bt')$ will not depend on~$S$ (this is essential).  
To simplify notation, we write $T_n$ and $M_{\bt}(n)$  for respectively  $T_n^{(S)}$  and $M^{(S)}_{\bt}(n) $. 
First, $ M_{\text{\ding{172}}}(n) = \alpha_n^{\text{\ding{172}}} \cdot D_{\text{\ding{172}}}(T_{n}) = 1$, 
which is clearly a martingale. 

Next, fix a decorated tree $\bt$ with $w(\bt) \geq 2$ 
and assume that the coefficients $a_n( \bs, \bs')$ have been constructed for every $\bs' \prec \bs \prec \bt$ and $n \geq n_{0}$, 
and that they have the desired properties. 
Then we claim that  there exist constants $(b_n(\bs,\bs'): \bs' \prec \bs \prec \bt, n \geq n_{0})$ 
such that $b_n(\bs,\bs')  \ll 1$ and
\begin{equation}\label{eq:inverse}
	D_{\bs}(T_{n}) = \frac{1}{\alpha_n^{\bs}} M_{\bs}(n) + 
	\sum_{\bs' \prec \bs} \frac{b_n(\bs, \bs')}{\alpha_n^{\bs'}} M_{\bs'}(n), \qquad n \geq n_{0}.
\end{equation}
Indeed, define the matrix $(\mathcal{A}_n(\bs,\br))_{\bs, \br \prec \bt}$ taking value
$ - a_n(\bs, \br)$ if $\br \prec \bs$, $1$ if $\bs = \br$ and $0$ otherwise. 
Then, by~\eqref{eq:defmart}, for every $n \geq n_{0}$, we have the following equality of vectors indexed by $\bs \prec \bt$:
$$
\left( \frac{1}{\alpha^{\bs}_{n}} \cdot M_{\bs}(n) \right)_{\bs \prec \bt}  
= \mathcal{A}_n \cdot \left(  D_{\bs}(T_{n}) \right)_{\bs \prec \bt}.
$$
We may write $  \{ \bs : \bs \prec \bt\}=  \{\bs_1, \ldots, \bs_K\}$ in such a way that $\bs_i \prec \bs_j \Rightarrow i < j$. 
In this setting, $\mathcal{A}_n$ is a triangular matrix with values $1$ on the diagonal and all coefficients $ \ll 1$. It follows that $\mathcal{A}_n$ is invertible, and that its inverse shares this same property. 
If we denote by $(b_n(\bs, \bs'))_{ \bs' \prec \bs}$ the above-diagonal entries of the inverse of $\mathcal{A}_n$, we obtain~\eqref{eq:inverse}.

Then Lemma~\ref{lem:espcond} and ~\eqref{eq:inverse} yield, for $n \geq n_{0}$,
\begin{eqnarray*}
	&&\Es{\alpha^{\bt}_{n+1} \cdot D_{ \bt} \big( T_{n+1} \big)  \big | \mathcal{F}_{n} }
	= \alpha^{\bt}_n \left(D_{ \bt} \big( T_{n} \big) +
		\frac{1}{2n-2 + w(\bt)} \sum_{\bt' \prec \bt} c(\bt, \bt') D_{\bt'} \big( T_{n} \big)\right) \\		
	&& \qquad \quad = \alpha^{\bt}_n \cdot D_{ \bt} \big( T_{n} \big) 
		+ \sum_{\bs \prec \bt} 
		\frac{1}{2n-2 + w(\bt)} \cdot \left( c(\bt,\bs) + \sum_{\bs \prec \bt' \prec \bt} c(\bt, \bt') b_n(\bs, \bt')\right)
		\cdot  \frac{\alpha^{\bt}_n}{\alpha_n^{\bs}} \cdot
		 M_{\bs}(n). 
\end{eqnarray*}
Now set\newcommand{\aaa}{\overline{a}} 
\begin{equation}\label{eq:defaaa}
	\aaa_n(\bt, \bs) = 
	 \sum_{j=2}^{n-1} \frac{1}{2j-2 + w(\bt)} \cdot \left( c(\bt,\bs) + 
	      \sum_{\bs \prec \bt' \prec \bt} c(\bt, \bt') b_j(\bt', \bs)\right) \cdot \frac{\alpha^{\bt}_j}{\alpha_j^{\bs}}, 
\end{equation}
for $n \geq n_{0}$, so that
\begin{eqnarray*}
	\Es{\alpha^{\bt}_{n+1} D_{ \bt} \big( T_{n+1} \big)  \big | \mathcal{F}_{n} } 
	 = \alpha^{\bt}_n D_{ \bt} \big( T_{n} \big) 
		+ \sum_{\bs \prec \bt} \big( \aaa_ {  {n+1}}(\bt,\bs) - \aaa_{ {n}}(\bt,\bs)\big)\cdot
		 M_{\bs}(n). 
\end{eqnarray*}
Since $(M_{\bt'}(n))_{n \geq n_{0}}$ is a $( \mathcal{F}_{n})$-martingale for every $\bt' \prec \bt$ by our induction hypothesis, the above implies that 
\begin{eqnarray*}
	M_{\bt}(n) &: = &\alpha^{\bt}_n D_{ \bt} \big( T_{n} \big)  {-}  \sum_{\bt' \prec \bt} \aaa_n(\bt,\bt')  M_{\bt'}(n) \\
	&= & 
	\alpha^{\bt}_n \left( D_{\bt} \big( T_{n} \big)  {-} \sum_{\bt' \prec \bt} \aaa_n(\bt,\bt')  
	\cdot \frac{\alpha_n^{\bt'}}{\alpha^{\bt}_n}  \cdot
		\left( D_{\bt'}(T_{n}) + \sum_{\bs \prec \bt'} a_n(\bt', \bs) \cdot D_{\bs}(T_{n}) \right)\right)\\
	&= & 
	\alpha^{\bt}_n \left( D_{\bt} \big( T_{n} \big)  {-}  
	\sum_{\bs \prec \bt} 
	\left( \aaa_n(\bt,\bs) \cdot \frac{\alpha_n^{\bs}}{\alpha^{\bt}_n}
		 +\sum_{\bs \prec \bt' \prec \bt} 
		\aaa_n(\bt,\bt') \cdot\frac{\alpha_n^{\bt'}}{\alpha^{\bt}_n}
		\cdot a_n( \bt', \bs) 
	\right)\cdot D_{\bs}(T_{n})
	\right).
\end{eqnarray*}
is a $( \mathcal{F}_{n})$ martingale.
Finally, for $\bs \prec \bt$ and $n \geq n_{0}$, set
\begin{equation}\label{eq:defa}
a_n(\bt, \bs) 
	\quad := \quad \aaa_n(\bt,\bs) \cdot \frac{\alpha_n^{\bs}}{\alpha^{\bt}_n}
		+ \sum_{\bs \prec \bt' \prec \bt} \aaa_n(\bt,\bt') \cdot\frac{\alpha_n^{\bt'}}{\alpha^{\bt}_n} \cdot a_n( \bt', \bs). 
\end{equation}
With this notation, it is now clear that the martingale $(M_{\bt}(n))_{n \geq n_{0}}$ defined as above satisfies~\eqref{eq:defmart}. 

Let us now analyse the orders of magnitude of the quantities $a_n(\bt, \bs)$ and $\Delta_{n} a(\bt, \bs)$ in order to establish~\eqref{eq:petit}. We have $\frac{1}{2n-2 + w(\bt)} \ll 1/n$, and for two decorated trees $\bs,  \bs'$ with $w(\bs), w( \bs') >1$ a straightforward computation yields
$$ 
\frac{\alpha_n^{\bs}}{\alpha^{\bs'}_n} \ll n^{\frac{w(\bs') - w(\bs)}{2}}
\quad \text{ and } \quad
\Delta_n \frac{\alpha^{\bs}}{\alpha^{\bs'}} \ll n^{\frac{w(\bs') - w(\bs)}{2} -1}.
$$
In addition, by our induction hypothesis, we have $b_n(\bs, \bt') \ll 1$ for every $\bs \prec \bt' \prec \bt$. 
From~\eqref{eq:defaaa} we get that
$$ 
\Delta_n \aaa(\bt,\bs) \ll n^{\frac{w(\bs) - w(\bt)}{2} -1}
\quad \text{ and } \quad
\aaa_n(\bt,\bs) \ll n^{\frac{w(\bs) - w(\bt)}{2}}
$$ 
for every $\bs \prec \bt$ such that $w(\bs)  { \geq } 2$.  Hence, for $\bs \neq \text {\ding{172}}$, 
\begin{equation} \label{eq:majoration}
\aaa_n(\bt,\bs) \cdot \frac{\alpha_n^{\bs}}{\alpha^{\bt}_n} \ll 1
\quad \text{ and } \quad
\Delta_n \left( \aaa(\bt,\bs) \cdot \frac{\alpha^{\bs}}{\alpha^{\bt}} \right) \ll \frac{1}{n}.
\end{equation}
A separate analysis shows that~\eqref{eq:majoration} also holds when $\bs = \text {\ding{172}}$. 
By the induction hypothesis, we have that  
$a_n( \bt', \bs) \ll 1$ and $\Delta_n a( \bt', \bs) \ll 1/n$ for every $\bs \prec \bt' \prec \bt$.
By combining the previous estimates with Eq.~\eqref{eq:defa} which defines $a_n(\bt, \bs)$, we obtain 
\begin{eqnarray*}
	a_n(\bt, \bs) \ll 1 \quad \text{ and } \quad	\Delta_n a(\bt, \bs) \ll 1/n, \quad \text{ for every } \bt' \prec \bt. 
\end{eqnarray*}
This completes the induction.

Finally, let us now prove that the martingales $ M_{\bt} $ defined by~\eqref{eq:defmart} are indeed bounded in $\L^2$. 
To this end, since the increments of a martingale are orthogonal in $\L^2$, write
$$ \Es{M_{\bt}(n)^2} = \sum_{j=n_0}^{n-1} \Es{ \big(M_{\bt}(j+1)-M_{\bt}(j)\big)^2 } + \Es{M_{\bt}(n_0)^2}.$$
It is clear that $\Es{M_{\bt}(n_0)^2} < \infty$, 
so it is enough to check that 
$$\sum_{n \geq n_{0}} \Es{\big(M_{\bt}(n+1)-M_{\bt}(n)\big)^2} < \infty.$$
By~\eqref{eq:defmart} and the Cauchy-Schwarz inequality, there exists a constant $c > 0$, 
depending only on $\bt$, such that for $n \geq n_{0}$, the quantity $c \cdot \Es{\big(M_{\bt}(n+1)-M_{\bt}(n)\big)^2}$ is bounded from above by
\begin{eqnarray*} 
	&&\Es{ \Big(\Delta_n \big(\alpha^{\bt} \cdot D_{\bt}(T) \big) \Big)^{2}} \\
	&& \qquad\qquad  +\sum_{\bt' \prec \bt}  \left( 
	 \big( a_{n+1}( \bt', \bt)\big)^2 \cdot\Es{ \Big( \Delta_n \big(\alpha^{\bt'}\cdot D_{\bt'}(T) \big) \Big)^{2} }    
	+ \big(\Delta_n a( \bt', \bt)\big)^2 \cdot \Es{ \big(\alpha_n^{\bt'} \cdot D_{\bt'}(T_{n})\big)^{2}} \right) .
\end{eqnarray*}
To bound this quantity, it will be useful to note that, for every $\bs$ with $w(\bs) >1$, 
by Lemma~\ref{lem:embed3} and a straightforward computation,
\begin{eqnarray*}
 	\Es{\Big( \Delta_n \big(\alpha^{\bs} \cdot D_{\bs}(T)\big)\Big)^2} 
	\leq 
	2 \big(\Delta_n\alpha^{\bs}\big)^{2} \cdot\Es{  D_{\bs}(T_{n})^2} 
	+	2 \big(\alpha_{n+1}^{\bs}\big)^{2}\cdot \Es{ \big(\Delta_n D_{\bs}(T)\big)^2}
	\ll n^{-3/2}.
\end{eqnarray*}
In addition, when $\bs = \text {\ding{172}}$, we have $\Delta_n \big(\alpha^{\bs} \cdot D_{\bs}(T)\big) = 0$. 
By combining the previous estimates  with~\eqref{eq:petit}, we finally get that
$$ \Es{(M_{\bt}(n+1)-M_{\bt}(n))^2}  \ll n^{-3/2}. $$
This implies $\sum_{n \geq n_0} \Es{\big(M_{\bt}(n+1)-M_{\bt}(n)\big)^2} < \infty $,
and the proof is complete. 
\end{proof}

\section{Scaling limits of looptrees} \label{section:scaling}
 
In this section, we prove Theorem~\ref{thm:scaling}. 
The mail tool is a coupling between the looptrees of the plane LPAM 
and a certain modification of  binary trees obtained by Rémy's algorithm \cite{Rem85}.
The coupling between the LPAM and Rémy's algorithm has already been noticed in the literature \cite{PRR14}, 
but will be recalled here and extended to looptrees.

\subsection{Coupling with Rémy's algorithm}\label {sec:coupling} \label {sec:dt}

We start by introducing some useful notation. In this section, unless stated otherwise, trees are not considered as embedded in the plane. A tree is binary when all its vertices have degree at most~$3$. If $x,y \in \tau$ are two vertices of a tree $ \tau$, we let $ \llbracket x,y \rrbracket$ be the  geodesic in $ \tau$ between $x$ and $y$. If $x_{0},x_{1}, \ldots, x_{k} \in \tau$ are distinct vertices, we let
$$ \mathsf{Span}( \tau; x_{0}, \ldots,x_{k})= \bigcup_{0 \leq  i,j \leq k} \llbracket x_{i}, x_{j} \rrbracket$$
be the tree spanned by these vertices.  A labeled tree $ \bt= ( \tau; x_{0}, x_{1}, \ldots,x_{n})$ is a pair formed of a tree $ \tau$ and a collection of leaves $(x_0, \ldots, x_n)$ of $ \tau$.

For a labeled tree $ \bt= ( \tau; x_{0}, x_{1}, \ldots,x_{n})$ the gluing $ \mathsf{Glu}( \boldsymbol{ \tau})$ of $ \bt$ is the graph constructed as follows. Set $p_{1}=x_{0}$, and for $2 \leq i \leq n$, let $p_{i}$ be the vertex of $ \mathsf{Span}( \tau ; x_{0},x_{1}, \ldots x_{i-1})$ which is the closest to $x_{i}$ in $\tau$.  Then $\mathsf{Glu}( \boldsymbol{ \tau})$ is by definition the graph obtained from $\tau$ by identifying the vertices $x_{i}$ and $p_{i}$ for every $1 \leq i \leq n$. 
See the second line of Fig.~\ref{fig:coupling} for an illustration.
Formally, the vertices of the graph $\mathsf{Glu}( \boldsymbol{ \tau})$ 
are the equivalence classes of the vertices of $ \tau$ 
for the equivalence relation generated by $x_{i} \sim p_{i}$ for every $1 \leq i \leq n$.  We view $\mathsf{Glu}( \boldsymbol{ \tau})$ as a compact metric space by endowing its vertex set with the graph distance.

\bigskip

Next we present Rémy's algorithm; it is a recursive procedure for building labeled binary trees. Start with the tree $ \mathbf{B}_{1}=(B_{1}; A_{0}, A_{1})$ consisting of a single edge with two leaves labeled $A_{0}$ and $A_{1}$. At every step $n \geq 1$, build $ \mathbf{B}_{n+1}$  from $ \mathbf{B}_{n}$ by picking an edge $e$ of $ \mathbf{B}_{n}$ uniformly at random,  adding a vertex $v$ on $e$ (thus splitting $e$ into two edges) and attaching a new edge to $v$ linking it to a new leaf denoted $A_{n+1}$. Rémy \cite{Rem85} showed that for every fixed $n \geq 1$, the labeled tree $ \mathbf{B}_{n}$ is uniformly distributed over the set of all binary trees with $n+1$ labeled leaves.

Let $ ( {T}_{n}^\multimap)_{n \geq 1}$ be the plane LPAM with seed $\multimap$, as defined in the Introduction. 
Recall that   $\mathsf{Loop}( {T}_{n}^\multimap)$ is the looptree associated with ${T}_{n}^\multimap$.
An important element of the proof of Theorem~\ref{thm:scaling} is the following.

\begin{proposition} \label{prop:coupling}
We have the following joint equality in distribution
$$ ( \mathsf{Loop}( T_{n}^\multimap) ; {n \geq 1})  \quad\mathop{=}^{(d)} \quad (\mathsf{Glu}( \mathbf{B}_{n}) ; {n \geq 1}).$$ 
\end{proposition}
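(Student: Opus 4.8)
The plan is to establish the joint equality in distribution by exhibiting a \emph{coupling} that matches the two dynamical constructions step by step. The key observation is that both processes are Markovian: $\mathsf{Loop}(T_{n}^{\multimap})$ is driven by the uniform choice of a corner of $T_{n}^{\multimap}$, while $\mathsf{Glu}(\mathbf{B}_{n})$ is driven by the uniform choice of an edge of $\mathbf{B}_{n}$. Since $T_{n}^{\multimap}$ has $n$ edges (starting from the planted seed $\multimap$) and $\mathbf{B}_{n}$ also has $2n-1$ edges but $n+1$ leaves, the natural bijection to seek is not between the trees themselves but between the \emph{corners} of $T_{n}^{\multimap}$ and a suitable set of objects in $\mathbf{B}_{n}$. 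First I would set up, by induction on $n$, an explicit bijection $\Psi_{n}$ between the corners of $T_{n}^{\multimap}$ and the edges of $\mathbf{B}_{n}$ (or, equivalently, carry a labelled-tree structure $\mathbf{B}_{n}=(B_{n};A_{0},\dots,A_{n})$ alongside $T_{n}^{\multimap}$) in such a way that the graph isomorphism $\mathsf{Loop}(T_{n}^{\multimap})\cong\mathsf{Glu}(\mathbf{B}_{n})$ holds at every step.

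The heart of the argument is then to verify that this bijection is \emph{compatible with the two growth mechanisms}. On the LPAM side, grafting a new edge in a uniformly chosen corner $c$ of $T_{n}^{\multimap}$ splits that corner into two and creates a new pendant vertex of degree one; at the level of the looptree, this replaces the loop through the chosen corner by inserting a new unit edge leading to a fresh vertex, i.e.\ it performs a local modification of $\mathsf{Loop}(T_{n}^{\multimap})$ at the location indexed by $c$. On R\'emy's side, selecting a uniform edge $e$ of $\mathbf{B}_{n}$, subdividing it and attaching the new leaf $A_{n+1}$ corresponds, after passing to $\mathsf{Glu}$, to identifying $A_{n+1}$ with the nearest point $p_{n+1}$ of the previously spanned subtree and thereby attaching a new loop/edge at precisely that location. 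The plan is to show that under $\Psi_{n}$ the chosen corner $c$ and the chosen edge $e$ correspond, and that the resulting updated structures again satisfy $\mathsf{Loop}(T_{n+1}^{\multimap})\cong\mathsf{Glu}(\mathbf{B}_{n+1})$ via an extended bijection $\Psi_{n+1}$. A careful bookkeeping of the \emph{new} corners created (two on the LPAM side) against the \emph{new} edges created (two on R\'emy's side, from subdividing $e$ and adding the pendant edge to $A_{n+1}$) closes the induction, since both uniform choices are then seen to be pushed forward to one another by $\Psi_{n}$.

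I would carry this out concretely as follows. One fixes the coupling so that at step $n$ the edge $e$ of $\mathbf{B}_{n}$ picked by R\'emy's algorithm is the image under $\Psi_{n}$ of the corner $c$ of $T_{n}^{\multimap}$ in which the new edge is grafted; because $\Psi_{n}$ is a bijection, a uniform corner maps to a uniform edge, so the coupling respects both laws. It then remains to check the single combinatorial identity: after the simultaneous updates, the glued graph $\mathsf{Glu}(\mathbf{B}_{n+1})$ is isomorphic to the looptree $\mathsf{Loop}(T_{n+1}^{\multimap})$, and the bijection $\Psi_{n}$ extends to $\Psi_{n+1}$ on the enlarged corner/edge sets. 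This is where the correspondence between \emph{nearest-point identification} in R\'emy's construction and \emph{loop insertion} in the looptree must be made precise, tracking how the length of each loop (equal to a vertex degree in $T_{n}^{\multimap}$) matches the number of edges meeting at the corresponding gluing vertex in $\mathsf{Glu}(\mathbf{B}_{n})$.

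The main obstacle I anticipate is not the probabilistic matching of the two uniform selection rules—once the bijection $\Psi_{n}$ is in place this is essentially automatic—but rather pinning down the \emph{correct combinatorial dictionary} at the base case and verifying the isomorphism $\mathsf{Loop}(T_{n}^{\multimap})\cong\mathsf{Glu}(\mathbf{B}_{n})$ is preserved under the update. In particular one must handle carefully the planted self-loop convention (the self-loop at the seed vertex is removed, per the definition of the looptree of a planted tree) and ensure that the degree of each vertex of $T_{n}^{\multimap}$, which dictates the length of its loop, corresponds exactly to the local cyclic structure produced by the sequence of nearest-point gluings $x_{i}\sim p_{i}$. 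Making this correspondence explicit at the level of the cyclic ordering around each vertex (so that the \emph{plane} structure is respected) is the delicate point; once it is established, the induction and hence the joint equality in distribution follow.
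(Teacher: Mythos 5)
Your proposal is correct and follows essentially the same route as the paper: both arguments identify the two processes as the same Markov chain by matching the uniform corner choice in $T_{n}^{\multimap}$ (equivalently, the uniform edge of $\mathsf{Loop}(T_{n}^{\multimap})$) with the uniform edge choice in $\mathbf{B}_{n}$, and checking that the local update (split the selected edge and attach a single-edge loop at the new midpoint, the latter arising on the R\'emy side from the identification $A_{n+1}\sim p_{n+1}$) is the same on both sides. The paper's proof is just a terser version of your induction, phrased directly as an equality of growth mechanisms and supported by a figure.
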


\begin{proof}The growth mechanism of $ \mathsf{Loop}(T_{n}^\multimap)$ is the following: at each step, an edge is selected uniformly at random, split in its middle by adding a new vertex, with attached to it a new loop made of single edge. Let us now turn to the growth mechanism of $ \mathsf{Glu}( \mathbf{B}_{n})$: This graph is the collection of  $n$ loops made by the geodesics starting from $ \mathsf{Span}(B_{n} ; A_{0}, \ldots , A_{i-1})$ and going to $A_{i}$ for $i \in \{1, \ldots , n\}$ which are turned into cycles by identifying their endpoints. Then an edge of $B_{n}$ is selected uniformly at random, and split in its middle by adding a new edge carrying $A_{n+1}$. Then observe that the impact of this splitting on $ \mathsf{Glu}(B_{n})$  is equivalent to the growth procedure of $\mathsf{Loop}(T_{n}^\multimap)$ we have described (see Fig.~\ref{fig:coupling} for an illustration).

\begin{figure}[!h]
 \begin{center}
 \includegraphics[width=0.9 \linewidth]{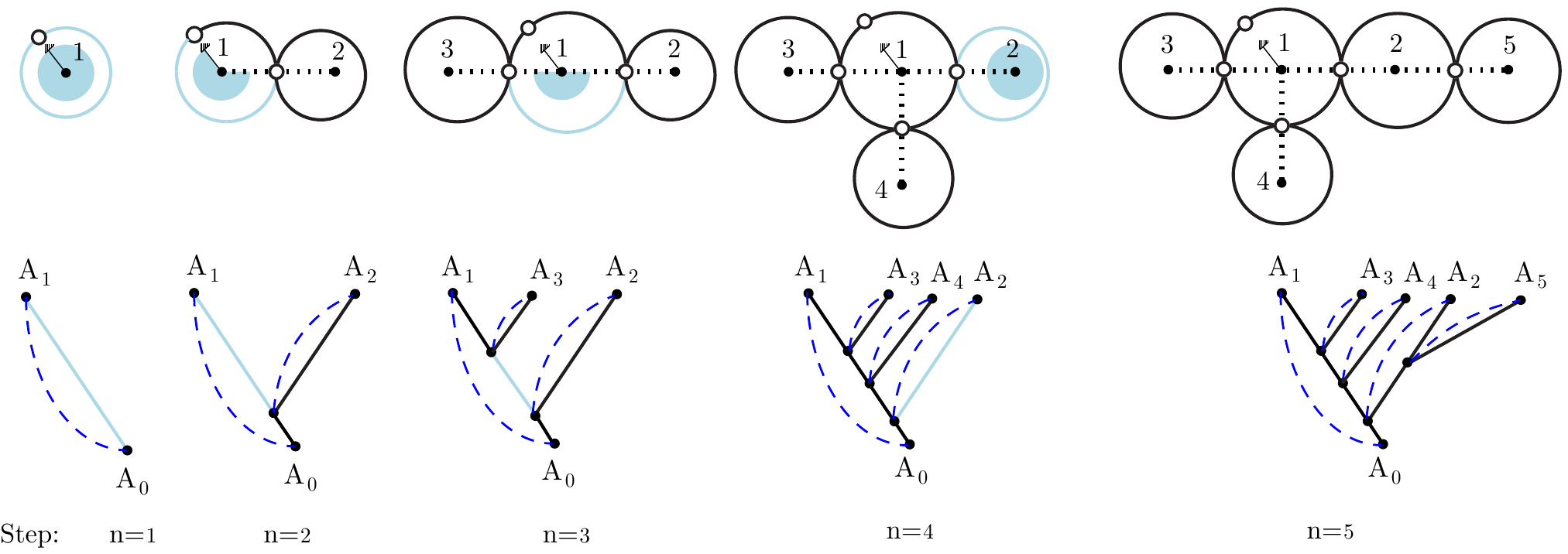}
 \caption{Illustration of the proof of Proposition~\ref{prop:coupling}, where in addition the edges that are split have been coupled in such a way that $\mathsf{Loop}( T^\multimap_{n})=\mathsf{Glu}( {\mathbf{B}}_{n})$. The first line represents the evolution of $(T^\multimap_{n}, \mathsf{Loop}( T^\multimap_{n}) )_{1 \leq n \leq 5}$ and the second the evolution of $({\mathbf{B}}_{n},\mathsf{Glu}( {\mathbf{B}}_{n}))_{1 \leq n \leq 5}$. In the second line, the dashed lines represent the identifications that are made.}
 \label{fig:coupling}
 \end{center}
 \end{figure}
\end{proof} 

\subsection{Definition of the Brownian looptree} \label {sec:br}

In this section, we define the Brownian looptree $ \mathcal{L}$. We first introduce some notation concerning continuous trees. 
A metric space $ ( \mathcal{T},d)$ is an $ \R$-tree if it contains no cycle and if for any points $x,y \in \mathcal{T}$ there exists a unique geodesic $ \llbracket x,y \rrbracket$ between $x$ and $y$ which isometric to a segment of $ \mathbb{R}$  (see \cite[Sec.~3]{LG05} for a more detailed definition). Moreover we impose that $ ( \mathcal{T},d)$  be compact. 
To mark the difference between $\R$-trees and the regular trees used up to now, we will sometimes call the latter discrete trees. 

We extend the notation introduced for discrete trees in Sec.~\ref {sec:dt} to continuous trees.  
If $ \mathcal{T}$ is an $ \R$-tree and  $x_{0},x_{1}, \ldots, x_{k} \in \mathcal{T} $ are distinct points, we let
$$ \mathsf{Span}(  \mathcal{T} ; x_{0}, \ldots,x_{k})= \bigcup_{0 \leq  i,j \leq k} \llbracket x_{i}, x_{j} \rrbracket$$
be the $ \R$-tree spanned by these vertices. The degree of a point $x \in \mathcal{T}$ is the number of connected components of $\mathcal{T} \backslash \{x\}$. A leaf is a point of degree $1$. A labeled $ \R$-tree is a pair consisting of an $ \R$-tree $ \mathcal{T} $ and a (finite or infinite) collection of leaves of $ \mathcal{T}$.  

Consider a labeled compact $ \R$-tree $\boldsymbol{\mathcal{T}} = ( \mathcal{T} ; (x_{i})_{0 \leq i < N})$, where $N \in \overline{\N}_{+} := \{1,2, \ldots\} \cup  \{+ \infty\}$, and assume that $ \mathcal{T}$ is binary (this assumption is not necessary, but it holds in our case and simplifies the exposition). The gluing $ \mathsf{Glu}( \boldsymbol{  \mathcal{T} })$ of $  \boldsymbol{\mathcal{T}}$ is  the quotient compact metric space constructed as follows. Set $p_{1}=x_{0}$, and for $2 \leq i < N$, let $p_{i}$ be the point of $ \mathsf{Span}( \mathcal{T} ; x_{0},x_{1}, \ldots x_{i-1})$ which is the closest to $x_{i}$ in $\tau$. Write $\sim$ for the equivalence relation on $ \mathcal{T}$ generated by $p_{i} \sim x_{i}$ for $1 \leq i < N$. If $d$ denotes the graph distance on $ \mathcal{T}$, we define a pseudo-distance $\Delta$ on $\mathcal{T}$  by
$$ \Delta(a,b) = \inf \left\{\sum_{i=0}^{k} d(p_{i},q_{i}) : p_{0}=a; q_{k}=b \right\},$$ 
where the infimum runs over all choices of $k \in \N$ and points $(p_{i})_{0 \leq i \leq k}$ and $(q_{i})_{0 \leq i \leq k}$ so that $q_{i} \sim p_{i+1}$ for $0 \leq i \leq k-1$. 

In the case of an generic metric space, defining a ``gluing'' metric could yield to more identifications 
that those prescribed by $ \sim$. This is not the case in our setup, as explained next. 

\begin{lemma}\label{lem:delta}
	For every $a,b \in \mathcal{T}$, $ \Delta(a,b) = 0$ if and only if $a \sim b$.
\end{lemma}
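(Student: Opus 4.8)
The plan is to prove the two implications separately. The implication $a\sim b \Rightarrow \Delta(a,b)=0$ is immediate: by definition of $\sim$ there is a finite sequence $a=u_0,u_1,\dots,u_m=b$ in which each consecutive pair $\{u_j,u_{j+1}\}$ equals some $\{p_i,x_i\}$, and taking $p_j=q_j=u_j$ for $0\le j\le m$ produces an admissible chain with $q_j\sim p_{j+1}$ and total cost $\sum_j d(p_j,q_j)=0$, so $\Delta(a,b)=0$. All the content is in the converse, which I would reduce to the existence of separating functions.

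Call $f:\mathcal{T}\to\R$ \emph{admissible} if it is $1$-Lipschitz for $d$ and constant on the $\sim$-classes (equivalently $f(p_i)=f(x_i)$ for all $i$). The key elementary observation is that for any admissible $f$ and any chain $a=p_0,q_0,p_1,\dots,q_k=b$ entering the definition of $\Delta$, a telescoping computation using $f(q_i)=f(p_{i+1})$ gives $f(a)-f(b)=\sum_{i=0}^{k}\big(f(p_i)-f(q_i)\big)$, whence $|f(a)-f(b)|\le\sum_{i=0}^{k} d(p_i,q_i)$. Taking the infimum over all chains yields
$$\Delta(a,b)\ \ge\ |f(a)-f(b)|\qquad\text{for every admissible }f.$$
Thus it suffices to show that whenever $a\not\sim b$ there is an admissible $f$ with $f(a)\neq f(b)$; note that this route deliberately avoids arguing directly with chains.

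To construct such an $f$ I would exploit the branch decomposition of $\mathcal{T}$. Writing $b_i=\llbracket p_i,x_i\rrbracket$ for the branch added at step $i$, one checks that the interiors $\mathrm{int}(b_i)$ are pairwise disjoint and that the $b_i$ decompose $\mathsf{Span}(\mathcal{T};(x_i))$, consecutive branches meeting only at the attachment points $p_i$ (points of $\mathcal{T}$ outside this span lie in singleton $\sim$-classes and are separated trivially, e.g.\ by $z\mapsto d(z,\mathsf{Span}(\mathcal{T};(x_i)))$, which is admissible). After gluing, each $b_i$ becomes a loop, i.e.\ a circle of length $d(p_i,x_i)$, and these loops are attached to one another at single points, forming a tree of loops. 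Two cases then arise for $a\not\sim b$. If $a$ and $b$ lie on different loops, I would separate them by a tree-distance function on the underlying tree of loops pulled back to $\mathcal{T}$; built from a genuine $\R$-tree metric it is $1$-Lipschitz, and being constant on the two endpoints of each branch it is admissible. If instead $a$ and $b$ lie on a common loop $b_i$, then $a\not\sim b$ forces them to be distinct points of the corresponding circle, and I would use the arc-length coordinate around that circle, extended to $\mathcal{T}$ by setting it constant (equal to its value at the relevant attachment point) on every sub-loop hanging off $b_i$; this is again admissible and separates $a$ from $b$. In either case $|f(a)-f(b)|>0$, so $\Delta(a,b)>0$.

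I expect the main obstacle to be the construction and simultaneous verification of admissibility (global $\sim$-invariance together with the $1$-Lipschitz property) of these separating functions in the presence of infinitely many loops, and giving a precise meaning to the ``tree of loops'' and its attachment structure when countably many loops accumulate. The reason I route the argument through the bound $\Delta(a,b)\ge|f(a)-f(b)|$ rather than manipulating chains directly is precisely to sidestep the delicate point that the infimum defining $\Delta$ need not be attained: a sequence of chains whose cost tends to $0$ could \emph{a priori} identify non-equivalent points, and the Lipschitz lower bound rules this out uniformly.
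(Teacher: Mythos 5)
Your reduction of the lemma to the existence of separating functions is sound: the inequality $\Delta(a,b)\ge|f(a)-f(b)|$ for every $1$-Lipschitz, $\sim$-invariant $f$ is correct, and it is a legitimate dual alternative to the paper's argument, which instead shows that $\sim$ is a closed relation and projects $a$ and $b$ onto the finite spans $\mathsf{Span}(\mathcal{T};x_0,\dots,x_i)$, where the gluing involves only finitely many identifications, before passing to the limit. The forward implication and the same-loop case of your construction (arc-length on the circle, extended by constants on the hanging sub-looptrees) can be made to work. However, there is a genuine gap in the converse. In the different-loops case the function you propose cannot exist as described: any function that is constant on each loop is forced to be globally constant on the whole span, because adjacent loops share an attachment point and the union of the loops is connected; so ``a tree-distance function on the underlying tree of loops pulled back to $\mathcal{T}$'' separates nothing. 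A working separating function must vary along every loop lying on the path between $a$ and $b$, and when the loops accumulate this path may traverse infinitely many loops --- which is exactly the difficulty you flag without resolving.

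In addition, your case analysis is incomplete: besides points on a loop and points outside $\mathsf{Span}(\mathcal{T};(x_i))$, there are points in the closure of the span lying on no branch $\llbracket p_i,x_i\rrbracket$ (accumulation points of the loops), for which no anchor loop is available; and two distinct points both outside the span are not separated by $z\mapsto d(z,\mathsf{Span}(\mathcal{T};(x_i)))$ alone (one needs, say, a capped distance to one of them, using that a small ball around it avoids all the $x_i$ and $p_i$). The paper's proof sidesteps all of this: since $\mathcal{T}$ is binary, the nontrivial classes are exactly the pairs $\{p_i,x_i\}$, whose open geodesics are pairwise disjoint; by compactness $\sim$ is therefore closed; then $\Delta(a,b)=0$ forces the projections $a_i,b_i$ of $a,b$ onto the $i$-th span to satisfy $a_i\sim b_i$ and to converge to $a$ and $b$ respectively, and closedness concludes. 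If you wish to keep the dual route, you would need to build your separating functions by an analogous finite approximation, which essentially reproduces the paper's argument.
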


\begin{proof}  We first check that $\sim$ is closed. For this, consider a sequence $(a_{i},b_{i})_{i \geq 0}$ converging to $(a,b)$ as $i \to\infty$. We can suppose without loss of generality that  all the points $\{a_{i},b_{i}\}_{i \geq 0}$ are distinct. Since $ \mathcal{T}$ is binary, it is simple to see that we have $ u \sim v$ if and only if $u=v$ or $ \{ u,v\}= \{ p_{i},x_{i}\} $ for a certain $i \geq 1$. In particular, if $\{a,b\} \ne \{c,d\}$ with $a\sim b$ and $c \sim d$ we must have $\rrbracket a,b\llbracket \, \cap \,  \rrbracket c,d \llbracket = \varnothing$. By compactness this implies that $ d(a_{i},b_{i}) \to 0$, and hence $a=b$. The relation $\sim$ is thus closed.

Now let $a,b \in \mathcal{T}$ be such that $ \Delta(a,b) =0$.  For every $i \geq 1$, we denote by $a_{i}$ and $b_{i}$ the projections (i.e. closest point) of respectively $a$ and $b$ on $ \mathsf{Span}( \mathcal{T} ; x_{0}, \ldots , x_{i})$. A moment's thought shows that $a_{i}$ must by equal to $b_{i}$ inside $ \mathsf{Glu}(\mathsf{Span}( \mathcal{T} ; x_{0}, \ldots , x_{i}) ; x_{0}, \ldots , x_{i})$, since otherwise we would have $ \Delta(a,b)>0$. In particular, $a_{i} \sim b_{i}$. As $i \to \infty$, we have $a_{i} \to \tilde{a}$ where $ \tilde{a}$ is the projection of $a$ on the closure of 
$\bigcup_{i} \mathsf{Span}( \mathcal{T} ; x_{0}, \ldots , x_{i})$, and similarly $b_{i} \to \tilde{b}$. If $a \ne \tilde{a}$ (or $ b \ne \tilde{b}$), we would have $ \Delta(a,b)>0$ since there would exist a small ball around $a$ unaffected by the gluings. Hence $(a,b)= (\tilde {a}, \tilde {b})$, and $a_{i} \to a$ and $b_{i} \to b$. Since $\sim$ is closed, we have $a \sim b$ as desired. 
\end {proof} 

Using the above we may deduce (see for instance \cite[Exercise 3.1.14]{BBI01}) that  
\begin{equation}\label{eq:defrabbit}
	\mathsf{Glu}( \mathcal{T} ; (x_{i})_{0 \leq  i < N}) \quad := \quad (\mathcal{T} / \sim, \Delta).
\end{equation}
is a \emph{compact} metric space, which we call the (continuous) gluing of $( \mathcal{T} ; (x_{i})_{0 \leq  i < N})$. 
We shall denote by $ \pi : \mathcal{T} \rightarrow \mathsf{Glu}( \mathcal{T} ; (x_{i})_{0 \leq  i < N}) $ the canonical projection.

In the case where $ \mathcal{T} = \mathcal{T}_{ \mathbf{e}}$ is  the Brownian CRT 
and $x_{i} = X_{i}$ for $i \geq 0$ is a sequence of i.i.d.~random variables sampled according to the mass measure of $ \mathcal{T}_{ \mathbf{e}}$, the random compact metric space $ \mathcal{L}=\mathsf{Glu}( \mathcal{T}_{ \mathbf{e}}, (X_{i})_{i \geq 0})$ is called the Brownian looptree.

\begin{remark}
The Brownian looptree may also be constructed through a line breaking procedure, 
very similar to the one designed by Aldous to construct the Brownian CRT (see \cite[Theorem 7.6]{Pit06}). 
Consider $0 <\theta_{1}< \theta_{2} < \cdots$ to be the points of a Poisson point process on $\R_{+}$ with intensity $t/ 2 \cdot dt$.
Break the line $[0, \infty)$ at points $\theta_{k}$ to create segments of length $\theta_1, \theta_2 - \theta_1, \dots$. 
Glue the two end-points of each such segment together to create metric circles $ \mathcal{C}_{1}, \mathcal{C}_{2}, \dots$. 
Construct recursively metric spaces $ \mathcal{G}_{1}, \mathcal{G}_{2}, \dots$ 
by setting $ \mathcal{G}_{1} = \mathcal{C}_{1}$ and, for each $k \geq 1$, 
glueing $ \mathcal{C}_{k+1}$ to a point chosen uniformly at random on $ \mathcal{G}_{k}$.
The Brownian looptree is then the completion of $ \cup_{k \geq 1} \mathcal{G}_{k}$.
\end{remark}

\subsection{Convergence towards the Brownian looptree}\label{sec:proof}\label{sec:GH}

We briefly describe the $k$-pointed Gromov--Hausdorff topology (we refer to \cite{BBI01, Eva08,Mie09} for additional details). 
A $k$-pointed compact metric space is a triple $(E,d, (x_{1}, \ldots,x_{k}))$, where $(E,d)$ is a compact metric space and $x_{1}, \ldots,x_{k} \in E$. Two $k$-pointed compact metric spaces are said to be isometric if there exists an isometry between them mapping the $k$ distinguished points of one of them to the distinguished points of the other (preserving the order). 
The set of isometry classes of $k$-pointed compact metric spaces is endowed with the $k$-pointed Gromov--Hausdorff distance defined next. If $(E,d,( x_{1}, \ldots,x_{k}))$ and $(E',d',(x'_{1}, \ldots,x'_{k}))$ are two $k$-pointed compact  metric spaces, 
\begin{eqnarray*} 
\op{\mathrm{d_{GH}}}\big((E,d, (x_{1}, \ldots,x_{k})) , (E',d',( x'_{1}, \ldots,x'_{k}))\big) 
&=& \inf \left\{\op{d}_{\op{H}}^F(\phi(E),\phi'(E'))  \vee \max_{1 \leq i \leq k} \delta( \phi(x_{i}), \phi'(x'_{i}))\right\}, 
\end{eqnarray*} 
where the infimum is taken over all choices of metric spaces $(F,\delta)$ and  isometric embeddings $\phi : E \to F$ and $\phi'  : E' \to F$ of $E$ and $E'$ into $F$, and where $ \mathrm{d}_{ \mathrm{H}}^F$ denotes the Hausdorff distance between compacts sets in $F$. 
The $k$-pointed Gromov--Hausdorff distance is indeed a metric on the space of isometry classes of $k$-pointed compact metric spaces. It renders this space separable and complete. For $k=0$, $\op{\mathrm{d_{GH}}}$ is the usual Gromov--Hausdorff distance on (isometry classes of) compact metric spaces.   

\medskip

We now state a continuity proposition inspired from \cite[Proposition 12]{CH13}. 
If $(E,d)$ is a metric space and $x_{0}, \ldots, x_{n} \in E$, we say that $x_{0}, \ldots, x_{n}$ is an $ \varepsilon$-net in $E$ if $E= \displaystyle \bigcup_{0 \leq i \leq n}  \{y \in E; d(y,x_{i})< \varepsilon\}$.

\begin{proposition} \label{prop:lipschitz} 
Let $(\tau ; (x_{i})_{0 \leq i <N})$ be either a labeled discrete tree, or a labeled $ \R$-tree, with $N \in \overline{\N}_{+}$. Then, for every integer $0 \leq k < N$,\begin{eqnarray*}    \mathrm{d_{GH}} \big( \mathsf{Glu}(\tau ; (x_{i})_{0 \leq i \leq k}) , \mathsf{Glu}( \tau ; (x_{i})_{0 \leq i <N})\big) \leq  2 \inf\left\{\varepsilon>0 ;  x_{0}, \ldots, x_{k} \mbox{ is an } \varepsilon \mbox{-net in } \tau\right\},\end{eqnarray*}  where $\tau$ is equipped with its graph distance in the discrete case or with its metric in the continuous case.
\end{proposition}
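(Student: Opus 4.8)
The plan is to compare both gluings to a common ``core'' space and then conclude by the triangle inequality for $\dGH$. Fix a value $\varepsilon>0$ for which $x_0,\ldots,x_k$ is an $\varepsilon$-net in $\tau$, set $S_k=\mathsf{Span}(\tau;x_0,\ldots,x_k)$, and abbreviate $G_k=\mathsf{Glu}(\tau;(x_i)_{0\le i\le k})$ and $G=\mathsf{Glu}(\tau;(x_i)_{0\le i<N})$, with canonical projections $\pi_k,\pi:\tau\to G_k,G$. The core will be $G':=\mathsf{Glu}(S_k;x_0,\ldots,x_k)$, the gluing of the spanned subtree alone. Note first that for $i\le k$ the identification point $p_i$ depends only on $x_0,\ldots,x_{i-1}\in S_k$ and lies in $S_{i-1}\subseteq S_k$, so the identifications defining $G'$ are exactly those of $G_k$ (and of $G$) that involve only points of $S_k$.

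I would begin by recording the tree-structural facts that drive the argument. Every connected component $B$ of $\tau\setminus S_k$ (a ``branch'') is attached to $S_k$ at a single point $\beta$, and the $\varepsilon$-net hypothesis forces $d(z,\beta)=d(z,S_k)<\varepsilon$ for every $z\in B$, so each branch has radius $<\varepsilon$. Moreover, each extra identification segment $\llbracket x_i,p_i\rrbracket$ with $i>k$ is contained in the closure of a single branch: indeed $x_i$ is a leaf lying off $S_k$, and its closest point $p_i$ in $S_{i-1}\supseteq S_k$ is reached by travelling from $x_i$ towards $S_k$, hence stays in $\overline{B}$. Consequently, even after all identifications have been performed, each branch remains glued to the core only through the single point $\pi(\beta)$.

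The heart of the proof is that $G'$ embeds isometrically into each of $G_k$ and $G$. Since $G$ (resp.\ $G_k$) makes at least as many identifications as $G'$, the infimum defining the quotient pseudo-distance $\Delta$ is over more paths, so $\Delta_G\le\Delta_{G'}$ (resp.\ $\Delta_{G_k}\le\Delta_{G'}$) between images of points of $S_k$. For the reverse inequality I would argue that any path realizing a gluing distance between two points of $\pi(S_k)$ may be taken to avoid the branches entirely: because each branch is attached at the single point $\pi(\beta)$, any excursion into a branch leaves and re-enters the core at that same point and can be deleted without increasing the length. Hence distances between core points are computed inside the core, giving isometric embeddings $G'\hookrightarrow G_k$ and $G'\hookrightarrow G$. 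This step --- controlling geodesics of the pseudo-metric $\Delta$ and ruling out shortcuts through the glued branches --- is the main obstacle, and is precisely where the single-point-attachment structure above is needed; it rests on the disjointness of the interiors of the identification segments already observed in the proof of Lemma~\ref{lem:delta}.

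Finally, the $\varepsilon$-net hypothesis transfers to the quotients: for any $z\in\tau$ there is $j\le k$ with $d(z,x_j)<\varepsilon$, whence $\Delta(\pi(z),\pi(x_j))\le d(z,x_j)<\varepsilon$, and likewise in $G_k$; since $x_j\in S_k$, the isometric copy of $G'$ is therefore an $\varepsilon$-net in both $G_k$ and $G$. A subset that is an $\varepsilon$-net sits at Gromov--Hausdorff distance at most $\varepsilon$ from the ambient space, so $\dGH(G_k,G')\le\varepsilon$ and $\dGH(G,G')\le\varepsilon$, and the triangle inequality yields $\dGH(G_k,G)\le 2\varepsilon$. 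Taking the infimum over all admissible $\varepsilon$ gives the claimed bound. The discrete and continuous ($\R$-tree, possibly $N=\infty$) cases are treated identically, the compactness of $G'$, $G_k$ and $G$ being guaranteed by the construction via Lemma~\ref{lem:delta}.
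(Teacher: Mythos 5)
Your proof is correct and takes essentially the same route as the paper: both compare the two gluings to the common core $\mathsf{Glu}(\mathsf{Span}(\tau;x_0,\ldots,x_k);x_0,\ldots,x_k)$, observe that it embeds isometrically into each gluing while the quotient map is a contraction (so the $\varepsilon$-net property transfers), and conclude by the triangle inequality. The only difference is that you spell out the single-point branch-attachment argument justifying the isometric embeddings, a step the paper asserts without detail.
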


\proof  For $k \geq 0$, set $\tau_{k} = \mathsf{Span}( \tau ; x_{0}, \ldots , x_{k})$. We clearly have
  \begin{eqnarray} \label{eq:verysimple} \mathrm{d_H}(\tau, \tau_{k}) \leq \inf\left\{\varepsilon>0 ;  x_{0}, \ldots, x_{k} \mbox{ is an } \varepsilon \mbox{-net in } \tau\right\}. \end{eqnarray}  We thus can bound  $\mathrm{d_{GH}} \big( \mathsf{Glu}(\tau ; (x_{i})_{0 \leq i \leq k}) , \mathsf{Glu}( \tau ; (x_{i})_{0 \leq i <N})\big)$ above by 
    \begin{eqnarray*} && \mathrm{d_{GH}} \big( \mathsf{Glu}(\tau ; (x_{i})_{0 \leq i \leq k}) , \mathsf{Glu}( \tau_{k} ; (x_{i})_{0 \leq i \leq k})\big)
    +  \mathrm{d_{GH}} \big( \mathsf{Glu}( \tau_{k} ; (x_{i})_{0 \leq i  \leq k}) , \mathsf{Glu}( \tau ; (x_{i})_{0 \leq i <N})\big),  \end{eqnarray*} which is less than or equal to $\mathrm{\mathrm{d_H}}( \tau, \tau_{k}) +   \mathrm{\mathrm{d_H}}( \tau, \tau_{k})$ since
 $\mathsf{Glu}( \tau_{k} ; x_{0}, \ldots, x_{k}) \to \mathsf{Glu}( \tau ; x_{0}, \ldots , x_{k})$ and similarly $ \mathsf{Glu}( \tau_{k} ; x_{0}, \ldots , x_{k}) \to \mathsf{Glu}( \tau ; (x_{i})_{0 \leq i < N})$ are isometric embeddings and $\mathsf{Glu}$ is a contraction. Combining this with~\eqref{eq:verysimple} finishes the proof. \endproof

Before proceeding to the proof of of Theorem~\ref{thm:scaling}, we state a final simple property that we will not prove. 
\begin{lemma}\label{lem:gluecv}
	Fix an integer $k \geq 0$. 
	Let $(\boldsymbol{\mathcal{T}}^{(n)})_{n \geq 1}=\big( {\mathcal{T}}^{(n)}; x_{0}^{(n)},  \ldots, x^{(n)}_{k}\big)_{n \geq 1}$ 
	be a sequence of labeled $ \R$-trees
	and $\boldsymbol{\mathcal{T}}=( {\mathcal{T}}; x_{0},  \ldots, x_{k})$ 
	be a labeled $ \R$-tree. 
	Suppose that $ \boldsymbol{\mathcal{T}}^{(n)} \rightarrow \boldsymbol{\mathcal{T}}$ 
	holds almost surely for the $k$-pointed Gromov--Hausdorff topology. 
	Then  $ \rab(\boldsymbol{\mathcal{T}}^{(n)}) \rightarrow  \rab(\boldsymbol{\mathcal{T}})$ also holds almost surely 
	for the Gromov--Hausdorff topology. 
\end{lemma}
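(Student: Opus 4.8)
The plan is to establish that the map $\rab$ is continuous with respect to the Gromov--Hausdorff topology when restricted to a suitable class of labeled $\R$-trees, using Proposition~\ref{prop:lipschitz} as the main quantitative lever. First I would fix the number of marked points $k$ and reduce to the case $N=k+1$, i.e.\ to gluings of labeled $\R$-trees carrying \emph{finitely many} marked points. Indeed, by Proposition~\ref{prop:lipschitz} applied to both $\boldsymbol{\mathcal{T}}^{(n)}$ and $\boldsymbol{\mathcal{T}}$, the spaces $\rab(\boldsymbol{\mathcal{T}}^{(n)})$ and $\rab(\boldsymbol{\mathcal{T}})$ are already well approximated by their finite-point truncations $\rab(\mathcal{T}^{(n)};x_0^{(n)},\dots,x_k^{(n)})$, so it suffices to prove the convergence for finitely many identifications and then pass to the limit.

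\medskip

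Next I would prove the finite-point continuity statement directly. Fix a realization of the almost sure $k$-pointed Gromov--Hausdorff convergence $\boldsymbol{\mathcal{T}}^{(n)}\to\boldsymbol{\mathcal{T}}$. By definition of the $k$-pointed Gromov--Hausdorff distance, for each $n$ there is a metric space $(F_n,\delta_n)$ and isometric embeddings $\phi_n\colon\mathcal{T}^{(n)}\hookrightarrow F_n$, $\psi\colon\mathcal{T}\hookrightarrow F_n$ such that the images are $\eps_n$-Hausdorff-close and the marked points satisfy $\delta_n(\phi_n(x_i^{(n)}),\psi(x_i))\le\eps_n$, with $\eps_n\to 0$. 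The key observation is that the gluing points $p_i$ are \emph{canonically determined} by the marked points: recall that $p_i$ is the point of $\mathsf{Span}(\mathcal{T};x_0,\dots,x_{i-1})$ closest to $x_i$. I would therefore argue that the projection onto a spanned subtree, and the spanned subtree itself, are continuous operations under $k$-pointed Gromov--Hausdorff convergence; consequently $\phi_n(p_i^{(n)})$ and $\psi(p_i)$ are also $\eps_n'$-close in $F_n$, for some $\eps_n'\to 0$ controlled by $\eps_n$ together with the moduli of continuity of these projections. Since the equivalence relation $\sim$ identifies exactly the pairs $x_i\sim p_i$, the two gluing pseudo-metrics $\Delta^{(n)}$ and $\Delta$ are then uniformly close through the common embedding into $F_n$; invoking Lemma~\ref{lem:delta} to guarantee that no spurious identifications appear, the quotient spaces $\rab(\boldsymbol{\mathcal{T}}^{(n)})$ converge to $\rab(\boldsymbol{\mathcal{T}})$ in the Gromov--Hausdorff sense.

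\medskip

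I expect the main obstacle to lie in the second step, namely in controlling the stability of the gluing \emph{points} $p_i$ under Gromov--Hausdorff perturbations. The difficulty is that the nearest-point projection of $x_i$ onto a spanned subtree can in principle jump discontinuously when the tree is perturbed near a branch point, so one must verify that the limiting configuration $\boldsymbol{\mathcal{T}}$ is, almost surely, ``generic'' enough to avoid such degeneracies. In the relevant application the $x_i$ are i.i.d.\ samples from the mass measure of the Brownian CRT, which almost surely avoids branch points and produces projections $p_i$ lying in the interior of edges, so the projections are locally stable; I would state and use this genericity as the hypothesis making the projections continuous. Assembling the finite-point estimate with the truncation bound from Proposition~\ref{prop:lipschitz} via the triangle inequality for $\mathrm{d_{GH}}$ then yields the full claim, and the remaining arguments are routine $\eps/3$ bookkeeping.
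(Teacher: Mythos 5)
The paper does not actually prove this lemma: it is introduced as ``a final simple property that we will not prove'', so there is no official argument to measure yours against. Your route is the natural one and, in outline, it is correct; but two points deserve comment. First, your opening reduction is vacuous: the lemma already concerns a \emph{fixed finite} number $k+1$ of marked points, so there is nothing to truncate. The passage from finitely many to infinitely many gluing points via Proposition~\ref{prop:lipschitz} belongs to the proof of Theorem~\ref{thm:scaling}, not to this lemma. Second, and more substantively, the ``main obstacle'' you isolate --- possible discontinuity of the nearest-point projections $p_i$, which you propose to handle by an almost-sure genericity hypothesis on the marked points --- is not actually there, and it matters that it not be there: the lemma is stated for arbitrary labeled $\R$-trees, and a proof conditional on genericity would only establish a weaker statement. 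In an $\R$-tree the projection of $x_i$ onto the connected subtree $S_i=\mathsf{Span}(\mathcal{T};x_0,\dots,x_{i-1})$ is automatically metrically stable: if $u\in S_i$ satisfies $d(x_i,u)\le d(x_i,S_i)+\varepsilon$, then the branch point of $x_i$, $u$ and $p_i$ lies both on $[x_i,p_i]$ and in $S_i$, hence equals $p_i$, so that $p_i\in[x_i,u]$ and $d(u,p_i)=d(x_i,u)-d(x_i,p_i)\le\varepsilon$. Combine this with the facts that $d(x_i,S_i)=\min_{a,b<i}\tfrac12\bigl(d(x_i,x_a)+d(x_i,x_b)-d(x_a,x_b)\bigr)$ is a continuous function of the distance matrix of the marked points, and that in an $\R$-tree a point $q$ with $d(x_a,q)+d(q,x_b)\le d(x_a,x_b)+2\varepsilon$ lies within $\varepsilon$ of the geodesic between $x_a$ and $x_b$: it follows that any point of $\mathcal{T}$ corresponding to $p_i^{(n)}$ under a correspondence of distortion $\varepsilon_n$ containing the pairs $(x_j^{(n)},x_j)$ is within $O(\varepsilon_n)$ of $p_i$, with no genericity assumption. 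The remainder of your argument is indeed routine once you observe that the quotient pseudo-metric $\Delta$ is a minimum over finitely many chains (an identification never needs to be used twice, so chain lengths are bounded by $k$), hence a continuous function of the mutual distances of $a$, $b$ and the glued pairs; the quotient metrics therefore converge uniformly over the correspondence, which gives Gromov--Hausdorff convergence of the gluings.
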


\proof[Proof of Theorem~\ref{thm:scaling}]
 Recall from Section~\ref{sec:coupling} the notation $ \mathbf{B}_{n}=(B_{n}; A_{0}, \ldots , A_{n})$ for the sequence of trees grown by Rémy's algorithm.  By \cite[Theorem 5 (ii)]{CH13}, there exists a pair $(\mathcal{T}_{ \mathbf{e}}, (X_{i}; i \geq 0))$, where $\mathcal{T}_{ \mathbf{e}}$ is a Brownian CRT and $(X_{i}; i \geq 0)$ is a collection of i.i.d.~vertices sampled according to its mass measure, such that for every $k \geq 1$ we have the following convergence for the $k+1$-pointed Gromov-Hausdorff topology
\begin{eqnarray} \label{eq:cvpsremy} 
\big( n^{-1/2}\cdot B_{n} ; A_{0}, \ldots , A_{k}\big) 
& \xrightarrow[n\to\infty]{a.s.} & 
\big( 2 \sqrt{2} \cdot \mathcal{T}_{ \mathbf{e}}; X_{0}, \ldots , X_{k}\big).
\end{eqnarray}
Hence, by Lemma~\ref{lem:gluecv}, the following holds in the regular Gromov-Hausdorff topology
\begin{eqnarray} \label{cv2}   
	n^{-1/2} \cdot \mathsf{Glu}(B_{n} ; A_{0}, \ldots, A_{k}) 
	& \xrightarrow[n\to\infty]{a.s.} & 
	2 \sqrt{2} \cdot \mathsf{Glu}( \mathcal{T}_{ \mathbf{e}} ; X_{0}, \ldots , X_{k}).  
\end{eqnarray}
For $0 \leq k\leq n$, set $ \mathsf{L}_{n}^{(k)} =  \mathsf{Glu}(B_{n}; A_{0}, \ldots , A_{k})$ 
so that $ \mathsf{Glu}( \mathbf{B}_{n}) = \mathsf{L}_{n}^{(n)}$. 
Also set $ \mathcal{L}_{k} = \mathsf{Glu}( \mathcal{T}_{ \mathbf{e}} ; X_{0}, \ldots , X_{k})$. 
Now, for $n\geq k \geq 1$, 
\begin{eqnarray*}
	\dGH\left( \frac{\mathsf{L}_{n}^{(n)}}{ \sqrt{n}}, 2 \sqrt {2} \cdot \mathcal{L}\right) 
	&\leq&  
	d_{ \mathrm {H}}\left( \frac{\mathsf{L}_{n}^{(n)}}{ \sqrt{n}},  \frac{\mathsf{L}_{n}^{(k)}}{ \sqrt{n}}\right)  
	+ \dGH\left( \frac{\mathsf{L}_{n}^{(k)}}{ \sqrt{n}}, 2 \sqrt{2} \cdot \mathcal{L}_{k}\right) 
	+ d_{ \mathrm {H}}(2 \sqrt{2} \cdot \mathcal{L}_{k}, 2 \sqrt{2}\cdot \mathcal{L}),    
\end{eqnarray*}
Denote by respectively $U_{n,k}, V_{n,k}$ and $W_{k}$ the three terms appearing in the previous sum. 
In order to prove that the right-hand side above converges to $0$ as $n \to \infty$, 
we will first take the $\limsup$ of the above as $n \to \infty$, then make $k$ tend to $\infty$. 

By~\eqref{cv2}, $ \lim_{n \rightarrow \infty} V_{n,k}=0$ for every fixed $k \geq 1$. 
Also, since $ (X_{i})_{i \geq 0}$ is a.s.\,dense in $ \mathcal{T}_{ \mathbf{e}}$, 
by Proposition~\ref{prop:lipschitz} we get that $ \lim_{k \rightarrow \infty} W_{k}=0$. 
By Proposition~\ref {prop:lipschitz}, 
$$
	U_{n,k} = 
	d_{ \mathrm {H}}\left( \frac{\mathsf{L}_{n}^{(n)}}{ \sqrt{n}}, \frac{\mathsf{L}_{n}^{(k)}}{ \sqrt{n}} \right) 
	\leq  2 \inf\left\{\varepsilon>0 ;  A_{0}, \ldots, A_{k} \mbox{ is an } \varepsilon \mbox{-net in }  n^{-1/2} \cdot B_{n}\right\}.
$$
But by~\eqref{eq:cvpsremy} we have 
$$ 
	\inf\left\{\varepsilon>0 ;  A_{0}, \ldots, A_{k} \mbox{ is an } \varepsilon \mbox{-net in }  n^{-1/2} \cdot B_{n}\right\} 
	\xrightarrow[n \rightarrow \infty]{a.s.}  
	\inf\left\{\varepsilon>0 ;  X_{0}, \ldots, X_{k} \mbox{ is an } \varepsilon \mbox{-net in }  2 \sqrt {2} \cdot \mathcal{T} _{\mathbf{e} } \right\}.
$$
We deduce that 
$$ 
	\limsup_{k \to \infty} \limsup_{n \to \infty} 
	d_{ \mathrm {H}}\left( \frac{\mathcal{L}_{n}^{(n)}}{ \sqrt{n}},  \frac{\mathcal{L}_{n}^{(k)}}{ \sqrt{n}}\right) 
	\leq 4 \sqrt{2} \cdot \limsup_{k \to \infty}\inf\left\{\varepsilon>0 ;  X_{0}, \ldots, X_{k} \mbox{ is an } \varepsilon \mbox{-net in }  \mathcal{T} _{\mathbf{e} } \right\}=0,
$$
since the collection $(X_{i}; i \geq 0)$ is almost surely dense in $ \mathcal{T}_{ \mathbf{e}}$.
The proof is complete.
\endproof

\subsection{Convergence towards Brownian looptrees for general seeds}
\label{sec:genseed}

In this section we prove the Corollary \ref{cor:scaling}.
In order to describe the construction of $\mathcal{L}^{(S)}$ and prove this result, a preliminary discussion is required 
on how $ T_{n}^{(S)}$ may be constructed from independent copies of the processes $(T_{n}^{\multimap})_{n \geq 1}$. 

For $n \geq 0$ and $N \geq 1$, denote by $ \pol(n,N)$ the law after $n$ draws of the state of a P\'olya urn with $N$ colors,
starting with one ball of each color and diagonal replacement matrix $\textrm{Diag}(2,2, \ldots,2)$.
In other words, consider an urn with $N$ balls of different colors. 
At each step a ball is taken out uniformly at random, inspected, 
and then put back in the urn along with two additional balls of the same color. 
Then $\pol(n,N)$ is the law of $(X_1^n, \dots, X_N^n)$, where $X_{i}^n$ represents the numbers of balls of the $i$-th color after $n$ draws.  
If $ \underline{P}$ is a plane planted tree and $c$ is a corner of some plane tree $S$, 
then a new plane tree may be obtained by gluing $\underline{P}$ inside $c$, as depicted in Fig.~\ref{fig:glue}.

\begin{figure}[!h]
  \begin{center}
    \includegraphics[width=0.6 \linewidth]{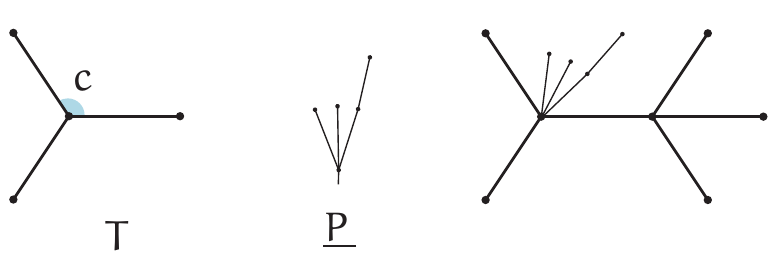}
    \caption{A plane tree $T$ with a distinguished corner $c$, 
      a plane planted tree, and the plane tree obtained by gluing $\underline{P}$ inside $c$.}
    \label{fig:glue}
  \end{center}
\end{figure}
 
\begin{proposition}\label{prop:decomp} 
  Fix a plane tree $S$ and let $(c_{1}, \ldots,c_{N})$ be an exhaustive enumeration of its corners with $N:=2|S|-2$. 
  If $n \geq |S|$ is an integer,  let $(\alpha^n_1, \dots, \alpha^n_N)$ be a random variable sampled according to $ \pol(n - |S|,N)$.  
  Then, conditionally on $(\alpha^n_1, \dots, \alpha^n_N)$, let 
  $\underline{P}^n_1, \dots, \underline{P}^n_N$ be independent random variables distributed as respectively 
  $T_{(\alpha^n_1+1)/{2}}^{\multimap}, \dots, T_{(\alpha^n_N+1)/{2}}^{\multimap}$.  
  Finally, let $ S_{n}$ be the tree obtained by gluing, for every $1 \leq i \leq N$, the planted tree $ \underline{P}^n_{i}$ 
  in each each corner $c_i$ of $S$. Then $ S_{n}$ has the same law as $T_n^{(S)}$.
\end{proposition}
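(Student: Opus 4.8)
The plan is to track, throughout the growth of $T_n^{(S)}$, which of the original corners $c_1,\dots,c_N$ of $S$ each newly created corner descends from. Since every new edge is grafted into a corner lying in the angular sector of a unique $c_i$, and grafting splits that corner while adding one further corner around the new leaf, this induces a partition of the non-seed part of $T_n^{(S)}$ into $N$ \emph{regions}. Viewing the subtree grown in region $i$ as a planted plane tree, rooted at the vertex $w_i$ of $S$ adjacent to $c_i$ and with distinguished half-edge pointing into the sector of $c_i$, one sees that it is a copy of $T_{1+m_i}^{\multimap}$, where $m_i$ is the number of graftings falling in region $i$; in particular it carries $2(1+m_i)-1 = 1+2m_i$ corners. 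The first step is thus to make this region decomposition precise and to check that the gluing map of the statement faithfully reconstructs $T_{|S|+m}^{(S)}$ from the tuple of regional planted trees. I expect this combinatorial bookkeeping to be the main obstacle: one must verify that corners partition cleanly, that grafting into a corner of region $i$ is exactly a preferential-attachment step on its planted tree, and that gluing correctly restores the plane structure and the shared root vertices $w_i$; once this intertwining is in place, the remainder is conceptually routine.

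Granting this, I would set up the Markov chain $\mathbf{P}(m)=(P_1(m),\dots,P_N(m))$ on $N$-tuples of planted plane trees, started from $P_i(0)=\multimap$ for all $i$, whose transition consists of choosing a region $i$ with probability proportional to its number of corners $1+2m_i$ and then performing one preferential-attachment step on $P_i$ (grafting into a uniformly chosen corner of $P_i$), leaving the other coordinates fixed. Because selecting a uniform corner of $T_{|S|+m}^{(S)}$ amounts to first choosing a region proportionally to its corner count and then a uniform corner inside it, an immediate induction on $m$ shows that gluing the coordinates of $\mathbf{P}(m)$ into the corners $c_1,\dots,c_N$ of $S$ produces a plane tree with the same law as $T_{|S|+m}^{(S)}$.

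The heart of the argument is then to disentangle the \emph{choice of region} from the \emph{shapes grown inside the regions}. The key observation is that the selection probability $\tfrac{1+2m_i}{\sum_j(1+2m_j)}$ depends on the past only through the counts $(m_1,\dots,m_N)$, and never on the actual planted trees. I would therefore realize $\mathbf{P}(m)$ using, on one hand, $N$ mutually independent copies $(Q_i^{(k)})_{k\ge 0}$ of the $T^{\multimap}$-growth process (so $Q_i^{(k)}\overset{d}{=}T_{1+k}^{\multimap}$), and, on the other hand, an autonomous selection sequence $(\xi_k)_{k\ge 1}\in\{1,\dots,N\}$ generated solely from the running counts, independently of the $Q_i$; one then sets $P_i(m)=Q_i^{(m_i)}$ with $m_i=\#\{k\le m:\xi_k=i\}$. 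Since $(\xi_k)$ is independent of the shape processes, conditioning on $(m_1,\dots,m_N)$ makes the trees $P_i(m)=Q_i^{(m_i)}$ independent, with $P_i(m)\overset{d}{=}T_{1+m_i}^{\multimap}$. Finally, $(\xi_k)$ is precisely a P\'olya urn with $N$ colours, one ball of each colour initially and replacement $2$: writing $\alpha_i^n=1+2m_i$ for the number of balls of colour $i$ after $m=n-|S|$ draws, we have $(\alpha_1^n,\dots,\alpha_N^n)\sim\pol(n-|S|,N)$ and $1+m_i=(\alpha_i^n+1)/2$, whence $P_i\overset{d}{=}T_{(\alpha_i^n+1)/2}^{\multimap}$ conditionally on the urn state. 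This is exactly the construction of $S_n$ in the statement, so $S_n$ has the same law as $T_n^{(S)}$.
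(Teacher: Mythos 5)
Your argument is correct and follows essentially the same route as the paper, which in fact only sketches this proposition informally (region decomposition by seed corners, P\'olya urn for the corner counts, conditional independence of the subtrees). Your explicit decoupling of the autonomous region-selection sequence from the $N$ independent $T^{\multimap}$-growth processes is a clean formalization of exactly the argument the paper leaves to the reader.
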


Rather than a formal proof, we give a brief explanation of this fact. 
Combined with Figure~\ref{fig:glue3}, it should be enough to convince the reader.
As $T_n^{(S)}$ grows from $S$, vertices are added sequentially. 
For every $1 \leq i \leq N$, there are $(\alpha^{n}_i - 1)/{2}$ vertices added  to the corner $c_i$ of $S$  (that is either direct neighbours of $c_i$, or linked to $c_i$ by edges not belonging $S$). In particular, the subtree of $T_n^{(S)}$ emanating from $c_i$ is a planted tree with $\alpha^{n}_i$ corners 
(including the corners at its base). 
Thus, in order to construct $T_{n+1}^{(S)}$ from $T_n^{(S)}$, in order to construct $T_{n+1}^{(S)}$, 
the new vertex is added in the tree emanating from $c_i$ with probability $\alpha^{n}_i/ \sum_ {j=1} ^{N}\alpha^{n}_j$. 
This shows that $(\alpha^{n}_1, \dots, \alpha^{n}_N)$ indeed follows the law $\pol(n,N)$.
Moreover, conditionally on the number $\frac{\alpha^{n}_i - 1}{2}$ of vertices added to $c_i$, 
these vertices are added following the rules of the LPAM starting with $ \multimap$ as the seed. 
Hence the tree emanating from $c_i$ in $T_n^{(S)}$ has the law of $T_{\frac{\alpha^{n}_i+1}{2}}^{\multimap}$. 
Finally, the trees growing inside the different corners of $T$ are independent conditionally on their size.

\begin{figure}[!h]
  \begin{center}
    \includegraphics[width=0.8 \linewidth]{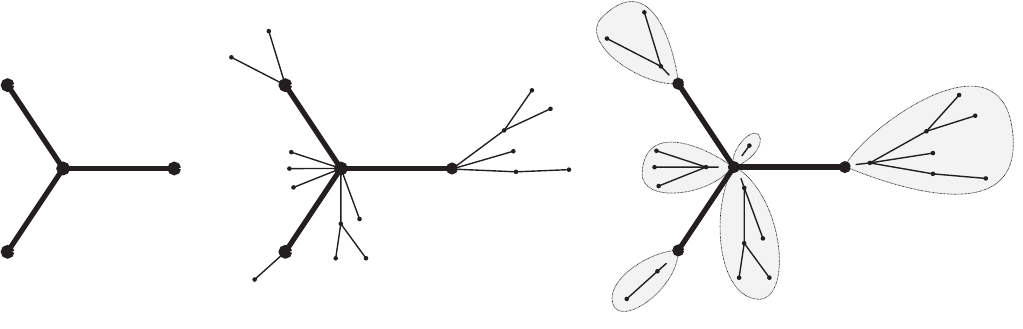}
    \caption{The tree $T^{(S)}_{n}$ is obtained by gluing planted trees in the corners of the seed $S$.}
    \label{fig:glue3}
  \end{center}
\end{figure}

We are now ready to describe the construction of the limit space $\mathcal{L}^{(S)}$ of Corollary \ref{cor:scaling}.
For this we need to introduce notation.

If $ \underline{P}$ is a planted tree, define a modified looptree $ \widetilde{\mathsf{Loop}}(\underline{P})$ 
by "cutting" ${\mathsf{Loop}}(\underline{P})$ at the vertex associated with the root half-edge of $\underline{P}$.
More precisely delete this vertex and add two distinct vertices as endpoints 
of the two edges of ${\mathsf{Loop}}(\underline{P})$ incident to the removed vertex.
Let  $g(\underline{P})$, resp.~$d(\underline{P})$, denote the endpoints of the edge to the left, resp. right, of the 
root half-edge of $\underline{P}$, when the latter is oriented towards its only endpoint. 
 \begin{figure}[!h]
 \begin{center}
 \includegraphics[width=0.5 \linewidth]{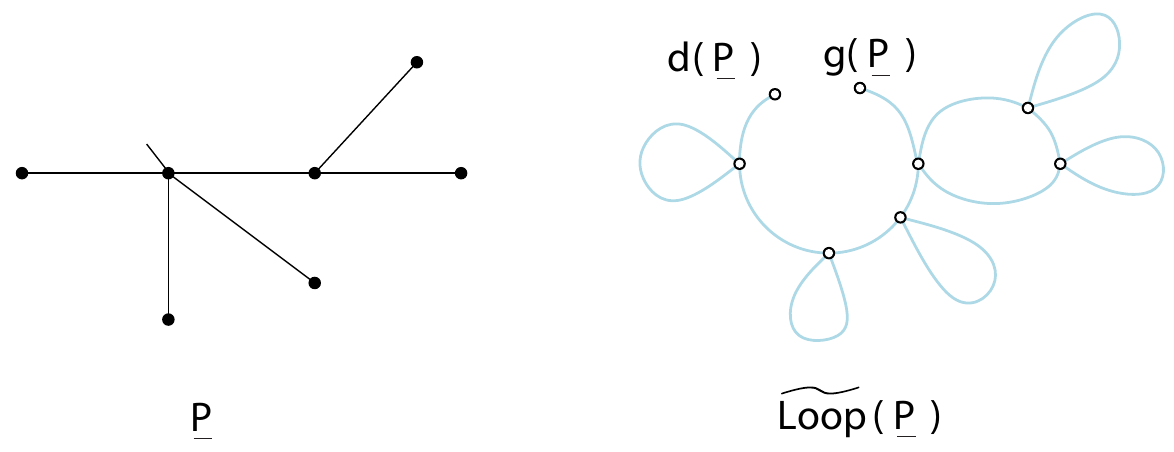}
 \caption{A planted tree $ \underline{P}$ and its associated modified looptree $\widetilde{\mathsf{Loop}}(\underline{P})$.}
 \label{fig:looptilde}
 \end{center}
\end{figure}

 A simple extension of Theorem~\ref{thm:scaling} then shows that we have the following almost sure $2$-pointed Gromov--Hausdorff convergence
\begin{equation*}
	\left( n^{-1/2} \cdot \widetilde{\mathsf {Loop}}(T_{n}^\multimap) ; \ g(T_{n}^\multimap),d(T_{n}^\multimap) \right)  
	\quad\mathop{\longrightarrow}^{a.s.}_{n \rightarrow \infty} \quad  
	\left(  2 \sqrt{2} \cdot  \widetilde{\mathcal{L}}\  ; \ \pi(X_{0}), \pi(X_{1}) \right),
\end{equation*}
where $ \widetilde{ \mathcal{L}}$ is constructed exactly as $ \mathcal{L}$ except that we do not make the identification $ X_{0} \sim X_{1}$. Equivalently, $ \widetilde{\mathcal{L}}$ is obtained from the  Brownian looptree by ``cutting" it at the vertex $ \pi( X_{0})$ and distinguishing the two newly obtained points. To simplify notation, write $\boldsymbol{\widetilde{  \mathcal{L}}}=(\widetilde{\mathcal{L}}\  ; \ \pi(X_{0}), \pi(X_{1}))$.
   
Assume that $T_n^{(S)}$ is constructed as in Proposition~\ref{prop:decomp}.
By standard results concerning P\'olya urns (see e.g 	\cite{Ath69} or \cite[Prop.~3]{CMP14}), we have
$$ 
	\left( \frac{\alpha^n_1}{n}, \dots, \frac{\alpha^n_N}{n} \right)  
	\quad\mathop{\longrightarrow}^{a.s.}_{n \rightarrow \infty} \quad 
	2 \cdot (\alpha_{1},\ldots, \alpha_{N}),
$$
where $(\alpha_{1},\ldots, \alpha_{N})$ follows the Dirichlet distribution $\mathrm{Dir} \left( \frac{1}{2},\frac{1}{2}, \ldots,\frac{1}{2} \right)$. It follows there exists a collection $({\widetilde{\mathcal{L}}}_{i}\  ; \ \pi(X^0_{i}), \pi(X^1_{i}))_{ 1 \leq i \leq N}$ of independent pointed modified Brownian looptrees such that the convergence
\begin{equation}\label{eq:tilde1} 
	\left( \frac{\widetilde{\mathsf {Loop}}(\underline{P}^n_i)}{ \sqrt{n} }\  ; \ g(\underline{P}^n_i),d(\underline{P}^n_i) \right)  
	\quad\mathop{\longrightarrow}^{a.s.}_{n \rightarrow \infty} \quad  
	\left(  2 \sqrt{2} \cdot \sqrt{ \alpha_{i}} \cdot {\widetilde{\mathcal{L}}}_{i}\  ; \ \pi(X^0_{i}), \pi(X^1_{i}) \right)
\end{equation}
holds in the $2$-pointed Gromov--Hausdorff topology for every $1 \leq i \leq N$. 

Then $\mathcal{L}^{(S)}$ is obtained by gluing these metric spaces along the structure given by the seed,
as described next. 
Let $\{m(e): e \in E(S)\}$ denote the collection of midpoints of edges of $S$. 
For each corner $c_i$ of $S$, let $e_g(i)$ and $e_d(i)$ be the edges to its left and right, respectively 
(note that they are not necessarily distinct). 
For every $i \geq 1$, identify the points $\pi(X^0_{i})$, $\pi(X^1_{i})$ of $2 \sqrt{2} \cdot \sqrt{ \alpha_{i}} \cdot {\widetilde{\mathcal{L}}}_{i}$ to $m(e_g(i))$ and $m(e_d(i))$, respectively. 
This creates a compact metric space
which we denote by $\mathcal{L}^{(S)}$.
The same construction may be performed in the discrete setting, see Fig.~\ref{fig:gluesurseed} for an illustration.

Corollary~\ref{cor:scaling} follows readily from~\eqref{eq:tilde1}
and from the fact that $\mathsf{Loop}(  T_{n}^{(S)})$ 
may be obtained from the modified looptrees of $\underline{P}^n_1, \dots, \underline{P}^n_N$
in the same way as $\mathcal{L}^{(S)}$ 
is obtained from $ \boldsymbol{\widetilde{\mathcal{L}}}_{1}, \dots,  \boldsymbol{\widetilde{\mathcal{L}}}_{N}$.

\begin{figure}[!h]
 \begin{center}
 \includegraphics[width=0.8 \linewidth]{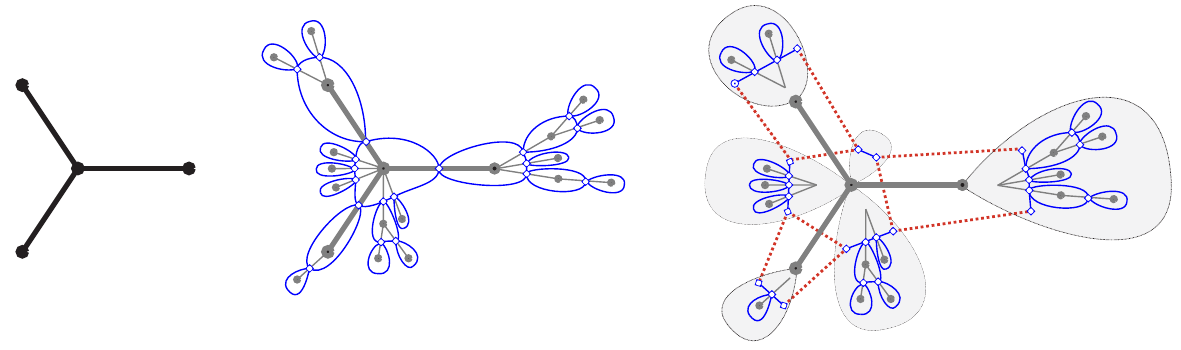}
 \caption{The discrete looptree $ \mathsf{Loop}(  T_{n}^{(S)})$ can be constructed from the collection of modified looptrees $(\widetilde{\mathsf {Loop}}(\underline{P}^n_i))_{i}$ by identifying vertices connected by dashed edges.}
 \label{fig:gluesurseed}
 \end{center}
\end{figure}

\subsection{Dimension of the Brownian looptree}
 In this section we establish Proposition~\ref{prop:hausdorff}.
 
Write $ \mathcal{L} = \mathsf{Glu}( \mathcal{T}_{ \mathbf{e}}, (X_{i})_{i \geq 0})$, where $ \mathcal{T}_{ \mathbf{e}}$ is a Brownian CRT and $(X_{i})_{i \geq 0}$ is a collection of independent leaves sampled according to its mass measure $ \mu$. Recall that $\pi : \mathcal{T}_{ \mathbf{e}} \to \mathcal{L}$ is the canonical projection. The upper bound on the Hausdorff dimension is a consequence of the fact that $ \pi$ is a contraction. Since $ \mathrm{dim}( \mathcal{T}_{ \mathbf{e}}) =2$, it follows that $ \mathrm{dim}( \mathcal{L}) \leq 2$ (see e.g. \cite[Theorem 7.5]{Mat95}). To establish the lower bound, we will use the probability mass measure $ \nu$ on $ \mathcal{L}$, which is defined as push-forward of $ \mu$ by the canonical projection. We shall show the following result: 

 \begin{lemma} \label{lem:borneinfDH}For every $ \varepsilon >0$, almost surely, for $\nu$-every $x$ we have 
 $$ \limsup_{r \to 0}\frac{\nu (B_{ r}(x))}{r^{2- \varepsilon}} =0,$$
 where $ B_{r}(x)$ denotes the open ball of radius $r$ around the point $x$ in $ \mathcal{L}$.
 \end{lemma}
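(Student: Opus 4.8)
The plan is to establish the lower bound on the Hausdorff dimension via a standard mass-distribution (Frostman-type) argument, for which Lemma~\ref{lem:borneinfDH} is precisely the required input: once we know that $\nu$-almost every point $x$ satisfies $\limsup_{r\to 0}\nu(B_r(x))/r^{2-\varepsilon}=0$, the classical density criterion (see e.g.~\cite[Theorem 4.13]{Mat95}) gives $\dim(\mathcal{L})\geq 2-\varepsilon$ almost surely, and letting $\varepsilon\to 0$ along a countable sequence yields $\dim(\mathcal{L})\geq 2$. Combined with the upper bound already noted (the projection $\pi$ is a contraction and $\dim(\mathcal{T}_{\mathbf{e}})=2$), this proves Proposition~\ref{prop:hausdorff}. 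So the entire content is in the lemma, and I would prove it as follows.

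\medskip

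The key idea is to transfer the estimate from $\mathcal{L}$ back up to the CRT $\mathcal{T}_{\mathbf{e}}$ using the fact that $\nu=\pi_*\mu$ and that $\pi$ is a contraction, so $\Delta(\pi(a),\pi(b))\leq d(a,b)$. First I would fix $x=\pi(Y)$ for $Y$ sampled according to $\mu$, independently of the $(X_i)$. The ball $B_r(x)$ in $\mathcal{L}$ pulls back under $\pi$ to a set containing $\{a\in\mathcal{T}_{\mathbf{e}}: \Delta(\pi(a),x)<r\}$, and since $\nu(B_r(x))=\mu(\pi^{-1}(B_r(x)))$, I want to bound the $\mu$-mass of the set of points in the CRT whose image is within $\Delta$-distance $r$ of $\pi(Y)$. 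The crucial geometric observation is that the gluing identifications only decrease distances through the cut-points $X_i\sim P_i$; a point $a$ can be $\Delta$-close to $Y$ either because it is already $d$-close to $Y$ in $\mathcal{T}_{\mathbf{e}}$, or because there is a short chain of identifications linking the subtree containing $a$ to the subtree containing $Y$. I would therefore decompose the mass bound into a ``direct'' contribution, controlled by the well-known estimate $\limsup_{r\to 0}\mu(B_r^{\mathcal{T}_{\mathbf{e}}}(Y))/r^{2-\varepsilon}=0$ for the CRT itself (a consequence of the uniform Hölder continuity of the Brownian excursion and standard CRT volume estimates, see~\cite[Sec.~2]{LG05}), and a ``detour'' contribution coming from points reachable only by passing through one or more gluing points within the budget $r$.

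\medskip

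\textbf{The detour term is the main obstacle.} To handle it I would use the explicit structure of the identifications: the points $P_i$ are branchpoints where the $i$-th leaf $X_i$ is attached to $\mathsf{Span}(\mathcal{T}_{\mathbf{e}};X_0,\dots,X_{i-1})$, and the loop created by identifying $X_i$ with $P_i$ has length equal to $d(X_i,P_i)$, which are the scaled limiting loop lengths. A point $a$ contributes to the detour term only if some identified pair $\{X_i,P_i\}$ lies within $\Delta$-distance $r$ of both $Y$ and the $d$-ball of radius $r$ around $a$; that is, the number and total $\mu$-mass of subtrees reachable from $Y$ through chains of identifications of total $d$-length at most $r$ must be controlled. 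I would bound the number of loops of length at least $\delta$ meeting a fixed region, using the Poisson line-breaking description recalled in the Remark after Lemma~\ref{lem:delta} (the loop lengths are governed by a Poisson process of intensity $\frac{t}{2}\,dt$, so only finitely many loops exceed any fixed length, and the small loops contribute negligible mass). Concretely, for $r$ small, any chain of identifications with total length $\leq r$ must use only loops of length $\leq r$; the $\mu$-mass attached beyond such a short loop is itself of order $r^{2}$ by the CRT volume estimate applied at the attachment point, and summing over the (sparse) small loops reachable within the budget keeps the detour mass at most of order $r^{2-\varepsilon}$ as well. Making the chain-counting rigorous — in particular showing that the probability of needing a long chain of many tiny loops to travel a $\Delta$-distance $r$ is negligible, uniformly over $\mu$-typical $Y$ — is the delicate step, and I would carry it out by a first-moment bound over $Y$ combined with a Borel–Cantelli argument along a geometric sequence $r=2^{-n}$, then use monotonicity of $r\mapsto\nu(B_r(x))$ to pass to all $r$.
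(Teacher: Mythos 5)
Your reduction to $x=\pi(Y)$ for an independent $\mu$-sampled leaf $Y$ via Fubini is exactly the paper's first step, but from there your route diverges, and the ``detour'' analysis --- which you correctly identify as carrying all the difficulty --- contains claims that do not hold as stated. First, using an identification $X_n\sim P_n$ in a $\Delta$-chain costs only the distance needed to reach \emph{one} of the two identified points, not the loop length $d(X_n,P_n)$; so your assertion that a chain of total length at most $r$ can only use loops of length at most $r$ is false. A large loop whose tip $X_n$ happens to lie within distance $r$ of $Y$ is perfectly usable and teleports you to $P_n$, which may sit in a completely different part of the tree. Second, the mass accessible ``beyond'' such a teleport is the $\mu$-mass of a subtree hanging at $P_n$, not the mass of a small ball, so the CRT volume estimate $\mu(B_r)\approx r^2$ does not bound it. Third, the chain-counting itself is only gestured at: the identification points are dense in $\mathcal{T}_{\mathbf{e}}$, chains may involve arbitrarily many teleports, and the first-moment/Borel--Cantelli scheme is not set up. In effect, to control the detour term one needs precisely the structural fact that identifications respect a nested family of subtrees around $Y$, which is the content you are missing.

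The paper's proof supplies exactly that structure and thereby avoids any chain-counting. It builds a nested sequence of subtrees $\mathcal{T}_{\mathbf{e}}=\mathcal{T}^1\supset\mathcal{T}^2\supset\cdots$ containing $Y$, cut at successive branch points $\tilde P_i$ determined by the first $X_{k_i}$ to land in $\mathcal{T}^i$. By Aldous' recursive self-similarity (Proposition~\ref{prop:arbreemboites}), each $\mathcal{T}^i$ is a rescaled CRT and $(-\log\mu(\mathcal{T}^i))_i$ is a random walk with exponential$(1/2)$ steps, so $\log\mu(\mathcal{T}^i)/i\to-2$. The key geometric input is Proposition~\ref{prop:bi}: every point outside $\mathcal{T}^i$ is at $\Delta$-distance at least $\mathcal{X}_i=\min\bigl(d(\tilde P_i,\tilde P_{i+1}),d(X_{k_i},\tilde P_{i+1})\bigr)$ from $\pi(Y)$, because the only way out of $\pi(\mathcal{T}^i)$ is across the loop $L_i$. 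Hence $B_{\mathcal{X}_i}(\pi(Y))\subset\pi(\mathcal{T}^i)$ and $\nu(B_{\mathcal{X}_i}(\pi(Y)))\leq\mu(\mathcal{T}^i)$, while the self-similar scaling $\mathcal{X}_i\stackrel{(d)}{=}\sqrt{\mu(\mathcal{T}^i)}\,\mathcal{X}_1$ plus Borel--Cantelli gives $\log\mathcal{X}_i/i\to-1$; comparing the two laws of large numbers yields the lemma. If you want to pursue your decomposition you would essentially have to prove this separation statement anyway, at which point the direct/detour split becomes unnecessary.
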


 By standard density theorems for Hausdorff measures \cite[Theorem 8.8]{Mat95} (this reference covers the case of measures on $ \R^n$, but the proof remains valid here), this implies that the Hausdorff dimension of $ \mathcal{L}$ is greater than or equal to $ 2-\varepsilon$, almost surely. The lower bound will thus follow. 
 
 The rest of this section is  devoted to the proof Lemma~\ref{lem:borneinfDH}. To simplify, we say that a point is chosen uniformly in $ \mathcal{T}_{ \mathbf{e}}$ if it is sampled according to its mass measure $ \mu$. Consider an additional uniform random leaf  $Y \in \mathcal{T}_{ \mathbf{e}}$, independent of $(X_{i})_{i \geq 0}$. Note that almost surely, $Y \neq X_{i}$ for every $i \geq 0$. 
 We shall prove that for every $ \epsilon \in (0,2)$, almost surely, 
  $$ \limsup_{r \to 0}\frac{\nu (B_{ r}( \pi(Y)))}{r^{2- \varepsilon}} =0,$$
  By Fubini's theorem, this indeed implies Lemma~\ref{lem:borneinfDH}. To this end, define a nested sequence of rooted subtrees $  \mathcal{T}_{ \mathbf{e}} = \mathcal{T}^{1} \supset \mathcal{T}^2 \supset \mathcal{T}^{3}  \supset \cdots$ all   containing the point $Y$ and defined recursively as follows.   First, set $\mathcal{T}_{ \mathbf{e}} = \mathcal{T}^{1}$ which is rooted at $\tilde{P}_{1} :=  X_{0}$. For every $j \geq 1$, if $\mathcal{T}^{1}, \ldots, \mathcal{T}^{j}$ have been constructed, set $ k_{j}= \min  \{i \geq 1; \ X_{i} \in \mathcal{T}^{j} \}$. Next, consider  $ \tilde{P}_{j+1}$ the branching point between $\tilde{P}_{j} , X_{k_{j}}$ and $Y$ (if $a,b,c$ are different leaves of $\mathcal{T}_{ \mathbf{e}}$, the branching point between $a,b$ and $c$ is defined to be the unique element of $\llbracket a ,b\rrbracket  \cap \llbracket b, c \rrbracket \cap \llbracket a,   c\rrbracket$). The tree $ \mathcal{T}^{j+1}$ is  finally defined to be the subtree of $ \mathcal{T}^j \backslash \{ \tilde{P}_{j+1}\}$ containing $Y$ to which we add the vertex $\tilde{P}_{j+1}$.
Moreover  $\tilde{P}_{j+1}$  is declared to be the root of  $ \mathcal{T}^{j+1}$. 
We refer to Fig.~\ref{fig:boucles} for an illustration.
 
  \begin{figure}[!h]
   \begin{center}
   \includegraphics[width=7.5cm]{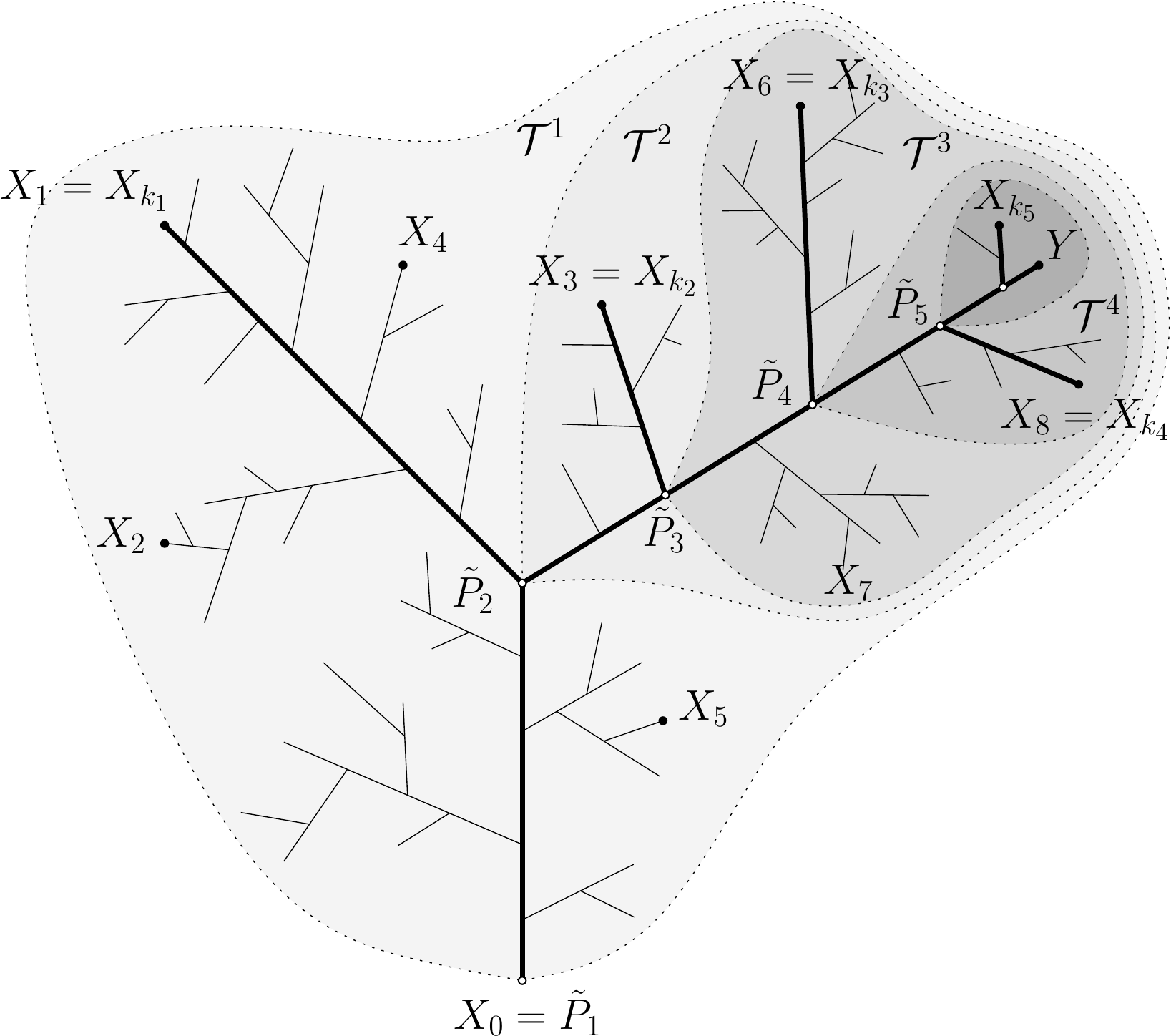} \hspace{1cm}
      \includegraphics[width=7.5cm]{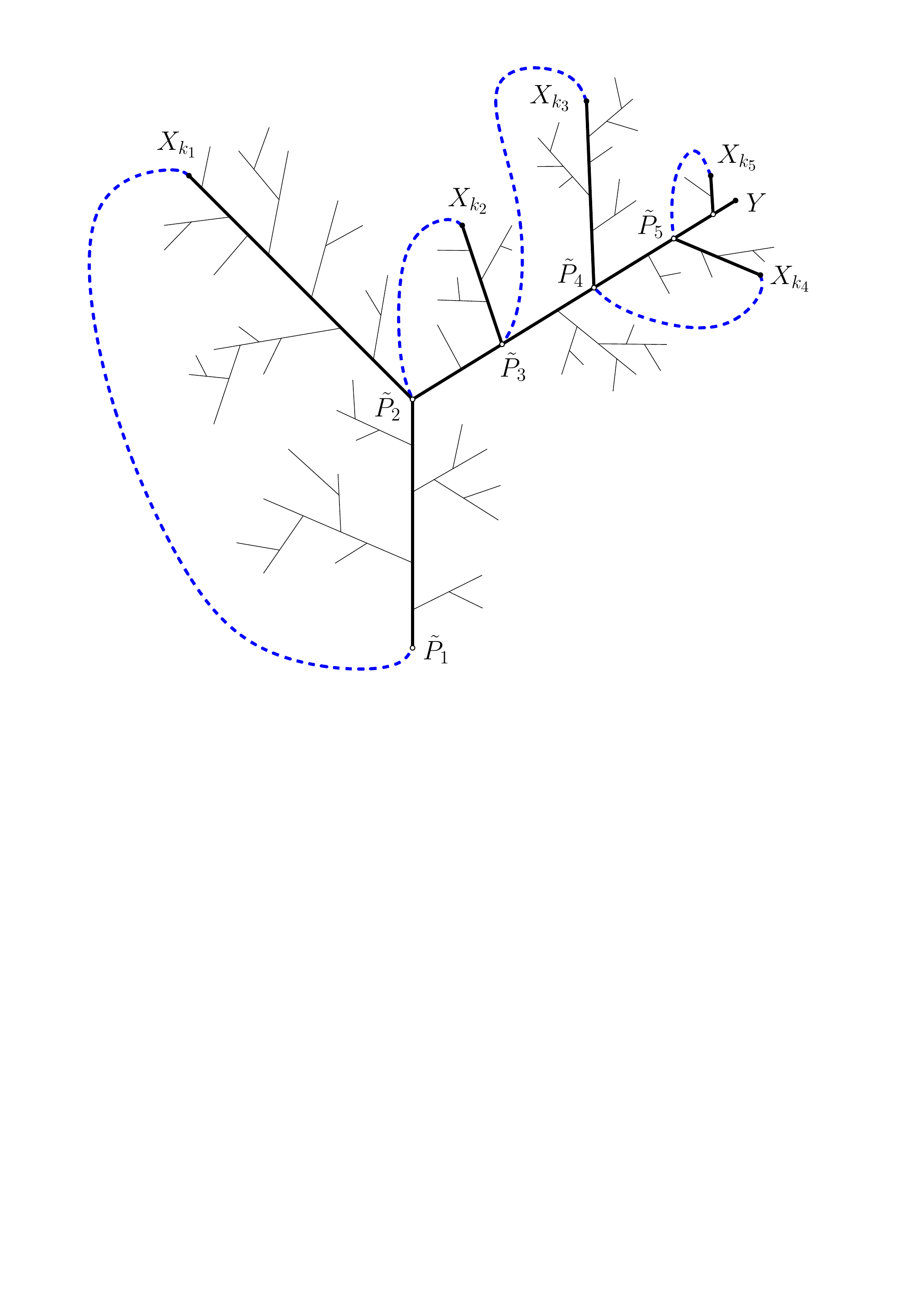}
   \caption{Illustration of the construction of $ \mathcal{T}^i$ for $i \geq 0$ and of the proof of Proposition~\ref{prop:bi}.}
   \label{fig:boucles}
   \end{center}
   \end{figure}

 \begin{proposition} \label{prop:arbreemboites} The following assertions hold.
 \begin{enumerate}
\item[(i)]  The process  $(-\log \mu ( \mathcal{T}^i))_{i \geq 1}$ is a random walk, 
	and its step distribution is an exponential random variable of parameter $1/2$.
\item[(ii)] For every $i \geq 1$,  the random tree $  \frac{1}{\sqrt{ \mu( \mathcal{T}^i})} \cdot \mathcal{T}^i$ 
has the law of a Brownian CRT. In addition $Y$ and $ \tilde{P}_{i}$ are two independent uniform leaves of $\mathcal{T}^i$.
\end{enumerate}
 \end{proposition}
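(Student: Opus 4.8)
The plan is to establish both assertions together by induction on $i$, taking the second assertion as the main induction hypothesis and deducing the first from the masses that appear along the way. The essential tool will be the self-similar decomposition of the Brownian CRT at the branching point of independent uniform leaves, which I now recall. If $\mathcal{T}$ is a Brownian CRT whose mass measure is normalised to a probability measure, and if $U_1, U_2, U_3$ are three i.i.d.\ uniform leaves with branching point $b = b(U_1, U_2, U_3)$, then $\mathcal{T} \setminus \{b\}$ splits into exactly three components (recall that branch points of the CRT have degree $3$ a.s.), containing $U_1, U_2, U_3$ respectively; their masses form a vector distributed as $\mathrm{Dir}(\tfrac12, \tfrac12, \tfrac12)$, and conditionally on these masses the three components, each rescaled in distance by the inverse square root of its mass and rooted at $b$, are independent Brownian CRTs in which $U_1, U_2, U_3$ are independent uniform leaves (see \cite{Ald91a, LG05, Pit06}). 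I will use this together with the re-rooting invariance of the CRT: the pair formed by the CRT and its root has the same law as the CRT marked at an independent uniform leaf.

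The base case $i=1$ is immediate, since $\mathcal{T}^1 = \mathcal{T}_{\mathbf{e}}$ is a Brownian CRT, $\tilde{P}_1 = X_0$ is a uniform leaf, and $Y$ is an independent uniform leaf. For the inductive step, assume the second assertion at rank $i$. The first thing I would check is that, conditionally on $\mathcal{T}^i$ and its two marked leaves $\tilde{P}_i$ and $Y$, the point $X_{k_i}$ is a \emph{fresh} uniform leaf of $\mathcal{T}^i$, independent of $\tilde{P}_i$ and $Y$. This is a rejection-sampling statement: conditionally on $\mathcal{T}_{\mathbf{e}}$ the $(X_j)_{j\geq 1}$ are i.i.d.\ uniform, the indices satisfy $k_1 < k_2 < \cdots$ (for $j < i$ the leaf $X_{k_j}$ is separated from $Y$ by $\tilde P_{j+1}$, hence lies outside $\mathcal{T}^{j+1} \supseteq \mathcal{T}^i$, while every index below $k_{i-1}$ already landed outside $\mathcal{T}^{i-1}$), so that $X_{k_i}$ is literally the first point of the sequence to fall in $\mathcal{T}^i$, and i.i.d.\ uniform points conditioned on the disjoint regions into which they fall are independent and uniform inside each region. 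Combined with the induction hypothesis, this makes $(\mathcal{T}^i; \tilde{P}_i, X_{k_i}, Y)$, suitably rescaled, a Brownian CRT carrying three independent uniform leaves.

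I would then apply the decomposition at $\tilde{P}_{i+1} = b(\tilde{P}_i, X_{k_i}, Y)$. The component containing $Y$ is exactly $\mathcal{T}^{i+1}$, so its mass relative to $\mathcal{T}^i$ is a coordinate of a $\mathrm{Dir}(\tfrac12,\tfrac12,\tfrac12)$ vector and, conditionally on it, the rescaled component is a Brownian CRT in which $Y$ is a uniform leaf. Its root $\tilde{P}_{i+1}$ is a leaf of $\mathcal{T}^{i+1}$, and re-rooting invariance (applied jointly with the independent uniform leaf $Y$) turns the pair $(\mathcal{T}^{i+1}, \tilde{P}_{i+1})$ into a CRT marked at a uniform leaf; thus $(\mathcal{T}^{i+1}; \tilde{P}_{i+1}, Y)$ rescaled is a Brownian CRT with two independent uniform leaves. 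This is precisely the induction hypothesis at rank $i+1$, which closes the induction and proves the second assertion.

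Finally, the first assertion is read off from the masses. Setting $R_i = \mu(\mathcal{T}^{i+1})/\mu(\mathcal{T}^i)$, the step above identifies $R_i$ as a coordinate of a $\mathrm{Dir}(\tfrac12,\tfrac12,\tfrac12)$ vector, hence $R_i \sim \mathrm{Beta}(\tfrac12,1)$ with density $\tfrac12\, r^{-1/2}$ on $(0,1)$, and a one-line change of variables gives $-\log R_i \sim \mathrm{Exp}(\tfrac12)$. Because each decomposition is driven by the fresh independent leaf $X_{k_i}$ and its law depends only on the rescaled CRT $\mathcal{T}^i$ (whose distribution is always the same), the ratios $(R_i)_{i \geq 1}$ are independent of the past and hence i.i.d.; since $\mu(\mathcal{T}^1)=1$, this yields $-\log \mu(\mathcal{T}^i) = \sum_{j=1}^{i-1}(-\log R_j)$, a random walk started at $0$ with $\mathrm{Exp}(\tfrac12)$ increments. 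I expect the main obstacle to be the decomposition result itself --- pinning down the exact $\mathrm{Dir}(\tfrac12,\tfrac12,\tfrac12)$ law and, above all, the \emph{independence} of the three rescaled sub-CRTs --- together with the measure-theoretic bookkeeping that guarantees $X_{k_i}$ is genuinely fresh and the successive ratios $R_i$ independent; the clean $\mathrm{Exp}(\tfrac12)$ value is a reassuring consistency check on the Dirichlet parameters.
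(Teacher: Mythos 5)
Your proposal is correct and follows essentially the same route as the paper: induction on $i$, with the key step being Aldous' decomposition of the CRT at the branching point of three independent uniform leaves into three rescaled independent CRTs with $\mathrm{Dir}(\tfrac12,\tfrac12,\tfrac12)$ masses, followed by the observation that a $\mathrm{Beta}(\tfrac12,1)$ mass ratio has $-\log$ distributed as $\mathrm{Exp}(\tfrac12)$. You supply somewhat more detail than the paper on two points it merely asserts --- the rejection-sampling argument showing $X_{k_i}$ is a fresh uniform leaf of $\mathcal{T}^i$, and the independence of the successive mass ratios --- but the underlying argument is the same.
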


 \proof	We prove the statement by induction on $i \geq 1$. For $i =1$, this is simply because $X_{0}$ and $Y$ are two independent leaves in $ \mathcal{T}_{ \mathbf{e}}$. By induction, at step $i \geq 1$, we assume that $ \mathcal{T}^{i}$ is a random multiple of a Brownian CRT and that $ Z_{0}:=\tilde{P}_{i}$ and $Z_{1}:=Y$ are two independent uniform leaves of $ \mathcal{T}^i$. Observe that  by construction, $Z_{2}:=X_{k_{i}}$ is a uniform leaf of $ \mathcal{T}^i$, independent of $(Z_{0},Z_{1})$. In addition, if $B$ denotes the branching point between $Z_{0},Z_{1}$ and $Z_{2}$ , note that $ \mathcal{T}^{i}$ is the union of three subtrees containing respectively $Z_{0}, Z_{1},Z_{2}$ and having $B$ as the only common element, and that the subtree $ \mathcal{T}^{i+1}$ is the one containing $Z_{1}$, rooted at $B$. It follows from Aldous' decomposition in three parts of the CRT  \cite[Theorem 2]{Ald94a} that $ \mu(\mathcal{T}^{i+1})=\alpha \cdot \mu(\mathcal{T}^i)$, where $\alpha$ is the first coordinate of a Dirichlet $\mathrm{Dir}(1/2,1/2,1/2)$ random variable independent of $ \mathcal{T}^i$, that $ \mathcal{T}^{i+1}$ has the same distribution as $\sqrt{ \alpha}$ times $ \mathcal{T}^i$,  and that $B$ and $Z_{1}$ are independent uniform leaves in $ \mathcal{T}^{i+1}$. This implies the second assertion. It is a simple matter to check that $ \alpha$ has density $(2 \sqrt{x})^{-1}$ on $[0,1]$, so that $- \log( \alpha)$ is distributed according to an exponential random variable of parameter $1/2$.  This completes the proof.
  \endproof
  
Now, for every tree $ \mathcal{T}^i$ we introduce the quantity $$ \mathcal{X}_{i} =  \min\Big( d_{ \mathcal{T}_{ \mathbf{e}}}( \tilde{P}_{i}, \tilde{P}_{i+1}), d_{ \mathcal{T}_{ \mathbf{e}}} ( X_{k_{i}}, \tilde{P}_{i+1}) \Big).$$
The reason for considering this random variable lies in the following geometric proposition:

 \begin{proposition} \label{prop:bi}For every $i \geq 2$ and every $x  \in \mathcal{T}_{ \mathbf{e}}$ such that $x \notin \mathcal{T}^{i}$, we have 
 $$  \Delta \big( \pi({x}), \pi({Y})\big) \geq  \mathcal{X}_{i}.$$
 \end{proposition}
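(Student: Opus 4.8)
The plan is to exhibit a single function $\Phi\colon\mathcal{T}_{\mathbf{e}}\to[0,\infty)$ which is $1$-Lipschitz for $d_{\mathcal{T}_{\mathbf{e}}}$ and \emph{constant on every glued pair}, i.e.\ $\Phi(X_m)=\Phi(p_m)$ for all $m\geq1$, where $p_m$ is the closest point of $\mathsf{Span}(\mathcal{T}_{\mathbf{e}};X_0,\ldots,X_{m-1})$ to $X_m$. Any such $\Phi$ bounds $\Delta$ from below: for an admissible sequence $p_0=a,q_0,p_1,q_1,\ldots,p_k,q_k=b$ in the definition of $\Delta$ one has $\Phi(q_l)=\Phi(p_{l+1})$ (since $q_l\sim p_{l+1}$ means $q_l=p_{l+1}$ or $\{q_l,p_{l+1}\}=\{X_m,p_m\}$) and $|\Phi(p_l)-\Phi(q_l)|\leq d_{\mathcal{T}_{\mathbf{e}}}(p_l,q_l)$, so telescoping gives $|\Phi(a)-\Phi(b)|\leq\sum_l d_{\mathcal{T}_{\mathbf{e}}}(p_l,q_l)$, whence $|\Phi(a)-\Phi(b)|\leq\Delta\big(\pi(a),\pi(b)\big)$. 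It then suffices to construct $\Phi$ with $\Phi\equiv0$ off $\mathcal{T}^{i}$ and $\Phi(Y)=\mathcal{X}_{i}$.

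To define $\Phi$, let $\gamma=\llbracket\tilde{P}_{i},X_{k_i}\rrbracket$. Since $\tilde{P}_{i+1}$ is the branching point of $\tilde{P}_{i}$, $X_{k_i}$ and $Y$, it lies on $\gamma$, and the subtree $\mathcal{T}^{i+1}$ (hence $Y$) branches off $\gamma$ exactly at $\tilde{P}_{i+1}$. Writing $\mathrm{pr}_\gamma$ for the nearest-point projection onto $\gamma$ (which is $1$-Lipschitz in an $\R$-tree), I would set
\begin{equation*}
	\Phi(z)\;=\;d_{\mathcal{T}_{\mathbf{e}}}\!\big(\mathrm{pr}_\gamma(z),\,\{\tilde{P}_{i},X_{k_i}\}\big)
	\;=\;\min\big(d_{\mathcal{T}_{\mathbf{e}}}(\tilde{P}_{i},\mathrm{pr}_\gamma(z)),\,d_{\mathcal{T}_{\mathbf{e}}}(X_{k_i},\mathrm{pr}_\gamma(z))\big).
\end{equation*}
This $\Phi$ is $1$-Lipschitz, being a composition of the $1$-Lipschitz maps $\mathrm{pr}_\gamma$ and $w\mapsto d_{\mathcal{T}_{\mathbf{e}}}(w,\{\tilde{P}_{i},X_{k_i}\})$. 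Any $z\notin\mathcal{T}^{i}$ reaches $\gamma\subseteq\mathcal{T}^{i}$ only through the root $\tilde{P}_{i}$, so $\mathrm{pr}_\gamma(z)=\tilde{P}_{i}$ and $\Phi(z)=0$; and $Y$ projects to $\tilde{P}_{i+1}$, so $\Phi(Y)=\min(d_{\mathcal{T}_{\mathbf{e}}}(\tilde{P}_{i},\tilde{P}_{i+1}),d_{\mathcal{T}_{\mathbf{e}}}(X_{k_i},\tilde{P}_{i+1}))=\mathcal{X}_{i}$, as wanted.

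The main work, and the main obstacle, is to verify $\Phi(X_m)=\Phi(p_m)$ for \emph{every} $m\geq1$; since $\Phi$ factors through $\mathrm{pr}_\gamma$, it suffices to compare $\gamma$-projections. The key bookkeeping input is that, because $\tilde{P}_{i}$ is the branching point of $\tilde{P}_{i-1},X_{k_{i-1}},Y$ and the points $\tilde{P}_{j}$ are aligned along the spine $\llbracket X_0,Y\rrbracket$, one has $\tilde{P}_{i}\in\llbracket X_0,X_{k_{i-1}}\rrbracket$, hence $\tilde{P}_{i}\in\mathsf{Span}(\mathcal{T}_{\mathbf{e}};X_0,\ldots,X_{k_i-1})$ (here $k_{i-1}<k_i$). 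Consequently, for $m>k_i$ both endpoints of $\gamma$ lie in $\mathsf{Span}(\mathcal{T}_{\mathbf{e}};X_0,\ldots,X_{m-1})$, so $\gamma$ is contained in this span; as $p_m$ is its nearest point to $X_m$, the point $p_m$ lies on $\llbracket X_m,\mathrm{pr}_\gamma(X_m)\rrbracket$, giving $\mathrm{pr}_\gamma(p_m)=\mathrm{pr}_\gamma(X_m)$ and thus $\Phi(X_m)=\Phi(p_m)$. For $1\leq m<k_i$, both $X_m$ and $p_m$ lie outside $\mathcal{T}^{i}$ (by minimality of $k_i$), hence both project to $\tilde{P}_{i}$ and $\Phi(X_m)=\Phi(p_m)=0$. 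The case $m=k_i$ is the delicate one: here $\mathrm{pr}_\gamma(X_{k_i})=X_{k_i}$ whereas $\mathrm{pr}_\gamma(p_{k_i})=\tilde{P}_{i}$ (as $p_{k_i}$ lies in a span made of points outside $\mathcal{T}^{i}$), so the two projections \emph{differ}; nevertheless $\Phi$ agrees, because $d_{\mathcal{T}_{\mathbf{e}}}(\cdot,\{\tilde{P}_{i},X_{k_i}\})$ vanishes at both endpoints of $\gamma$, i.e.\ $\Phi(X_{k_i})=\Phi(p_{k_i})=0$. Geometrically this is exactly the gluing $X_{k_i}\sim p_{k_i}$ closing $\gamma$ into the loop on which $\tilde{P}_{i}$ and $\tilde{P}_{i+1}$ sit at loop-distance $\mathcal{X}_{i}$.

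Combining the three cases shows that $\Phi$ respects all the identifications, and the inequality $|\Phi(z)-\Phi(Y)|\leq\Delta\big(\pi(z),\pi(Y)\big)$ then yields $\Delta\big(\pi(z),\pi(Y)\big)\geq\mathcal{X}_{i}$ for every $z\notin\mathcal{T}^{i}$, which is the assertion. No serious estimates are needed beyond the elementary $\R$-tree identities above; the crux is precisely the spine-alignment bookkeeping ensuring $\gamma\subseteq\mathsf{Span}(\mathcal{T}_{\mathbf{e}};X_0,\ldots,X_{m-1})$ for all $m>k_i$, so that every late gluing is compatible with the projection onto $\gamma$.
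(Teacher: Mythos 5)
Your proof is correct, and it is in fact more complete than the paper's own argument, which is only a two-line sketch: the authors observe that the segment $\llbracket \tilde{P}_{i}, X_{k_{i}}\rrbracket$ becomes a loop $L_{i}$ separating $\pi(Y)$ from $\pi(\mathcal{T}^{1}\setminus\mathcal{T}^{i})$, assert that any path from $\pi(Y)$ to that region must traverse $L_{i}$ from $\pi(\tilde{P}_{i+1})$ to $\pi(\tilde{P}_{i})$ at cost at least $\mathcal{X}_{i}$, and leave the details to the reader. You implement the same geometric picture by a different, cleaner device: an explicit $1$-Lipschitz functional $\Phi$ (nearest-point projection onto $\llbracket\tilde{P}_{i},X_{k_{i}}\rrbracket$ followed by the distance to its endpoints) that is constant on every glued pair, hence descends to the quotient and telescopes through any admissible chain in the definition of $\Delta$. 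This converts the informal separation argument into a one-line lower bound and isolates exactly what must be checked, namely compatibility with all identifications $X_{m}\sim P_{m}$; your three-case verification is sound, and the one nontrivial input --- that $\tilde{P}_{i}\in\llbracket X_{0},X_{k_{i-1}}\rrbracket$, whence $P_{k_{i}}=\tilde{P}_{i}$ and $\llbracket\tilde{P}_{i},X_{k_{i}}\rrbracket$ is contained in every later span --- is correctly identified and is the same fact the paper uses implicitly when it asserts that ``the points $X_{k_{i}}$ are glued to $\tilde{P}_{i}$''. The only cosmetic imprecision is in the case $1\leq m<k_{i}$, where $P_{m}$ could a priori equal $\tilde{P}_{i}\in\mathcal{T}^{i}$ rather than lie strictly outside $\mathcal{T}^{i}$; this is harmless since $\Phi(\tilde{P}_{i})=0$ as well.
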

 \proof[Proof (Sketch)] In the construction of the Brownian looptree from $ \mathcal{T}_{ \mathbf{e}}$ and $(X_{i})_{i\geq0}$, the points $ X_{k_{i}}$ are glued to $\tilde{P}_{i}$ for every $i \geq 0$.
 Specifically, each segment $\llbracket \tilde{P}_{i}, X_{k_{i}} \rrbracket$ becomes a loop denoted by $L_{i}$ in $ \mathcal{R}$, and the loops $L_{i}$ and  $L_{i+1}$ share the common point $ \pi(  \tilde{P}_{i+1}) = \pi( X_{k_{i}})$. It should then be clear that in $ \mathcal{R}$, the region $\pi ( \mathcal{T}^1 \backslash \mathcal{T}^{i})$ is separated from $\pi(Y)$ and that the only way to go from $  \pi(Y)$ to this region is to travel along $ \mathcal{R}$ and cross the loop $L_{i}$ from $ \pi(  \tilde{P}_{i+1})$ to $\pi(  \tilde{P}_{i})$ but this requires at least a length $ \mathcal{X}_{i}$. We leave the details to the reader. \endproof

\proof[Proof of Lemma~\ref{lem:borneinfDH}] By the first assertion of Proposition~\ref{prop:arbreemboites} and the strong law of large numbers we have  \begin{eqnarray} \label{eq:llnmasse} \frac{\log\big(\mu( \mathcal{T}^{i})\big)}{i}  & \xrightarrow[i\to\infty]{a.s.} -2.  \end{eqnarray} By the second assertion of the last proposition we have $  \mathcal{X}_{i}/\sqrt{ \mu ( \mathcal{T}^i)}^{} =  \mathcal{X}_{1}$ in distribution. In addition, by \cite[Theorem 2.11]{LG05}, $ \mathcal{X}_{1}$ has the same law as $\min(\ell_{1},\ell_{2})$ where $(\ell_{1},\ell_{2},\ell_{3})$ has density on $ \mathbb{R}_{+}^3$ given by $16 (\ell_{1}+\ell_{2}+\ell_{3}) e ^{-2 (\ell_{1}+\ell_{2}+\ell_{3})^2}\,\mathrm{d}\ell_{1}\mathrm{d}\ell_{2}\mathrm{d}\ell_{3}$.  From this expression, it is a simple matter to establish the existence of a constant $c>0$ such that for every $i \geq 1$, $ \P( \mathcal{X}_1 \geq i \mbox{ or } \mathcal{X}_1 \leq i^{-2}) \leq c i^{-2}$. An application of Borel--Cantelli's yields that almost surely, for every $i$ sufficiently large, $ \mathcal{X}_{i}/\sqrt{ \mu ( \mathcal{T}^i)}^{} \geq i^{-2}$ and $ \mathcal{X}_{i}/\sqrt{ \mu ( \mathcal{T}^i)}^{} \leq i^2$. Combining this with~\eqref{eq:llnmasse}, we get that 
 \begin{eqnarray} \label{eq:llndistance} \frac{\log( \mathcal{X}_{i})}{i}  & \xrightarrow[i\to\infty]{a.s.} -1.  \end{eqnarray}
 Now, by Proposition~\ref{prop:bi}, we have  $ B_{ \mathcal{X}_{i}}( \pi(Y))) \subset \pi(\mathcal{T}^{i})$. Noting that $ \mu( \pi^{-1}( \pi(A)))= \mu(A)$ for every $A \subset \mathcal{T}_{ \mathbf{e}}$, this implies that  $\nu( B_{\mathcal{X}_{i}}( \pi(Y))) \leq \mu( \mathcal{T}^{i})$. By combining~\eqref{eq:llndistance} and~\eqref{eq:llnmasse}, we finally obtain that 
 $$ \limsup_{ i \to \infty} \frac{\nu( B_{ \mathcal{X}_{i}}( \pi(Y)))}{ \mathcal{X}_{i}^{2- \varepsilon}} =0.$$
 Since $ \mathcal{X}_{i} \rightarrow 0$ a.s.~as $ i \rightarrow \infty$, this completes the proof. \endproof 

\section{Comments, extensions, conjectures and open questions}\label{sec:ext}

\subsection{Affine reinforcement}\label{sec:glpm} 

We  first investigate the extension of our results to the more general LPAM$^{\delta}$ model, 
in which vertices are chosen proportionally to an affine function of their degree. 
To describe this model, first fix a parameter  
$$\delta>-1.$$

Let $S$ be a finite tree with $n_0$ vertices. 
Define the random sequence of trees $(T^{(S),\delta}_{n})_{n \geq n_0}$ by $T^{(S),\delta}_{n_0} = S$ 
and, for $n\geq n_0$, conditionally on $T^{(S),\delta}_n$, 
the tree $T^{(S),\delta}_{n+1}$ is obtained from $T^{(S),\delta}_{n}$ 
by choosing a vertex $u \in T^{(S),\delta}_{n}$ with probability proportional to $\deg(u)+ \delta$, 
and connecting it via an edge to a new vertex. 
We call this the $LPAM^{ \delta}$ model.  It was first introduced in \cite{Mor02}. 
For $ \delta=0$ we recover  LPAM studied in the previous sections. 
The parameter $\delta$ has a dramatic impact on the geometry of $T^{(S),\delta}_{n}$ as $n \rightarrow \infty$.
For instance, it is known that in this context the maximal degree in $T^{\multimap, \delta}_{n}$ is of order $ n^{1/(2+ \delta)}$, see e.g. \cite{Mor05,Rem13}. 
Still, we conjecture that the analogs of our results hold in this setting with the appropriate modifications.
  
\begin{conjecture}[Influence of the seed]\label{thm:metric2}
For two trees $S_{1}, S_{2}$ set 
$ \displaystyle d_{\delta}(S_{1},S_{2}) = \lim_{n\to \infty} \mathrm{d_{TV}}(  {T}_{n}^{(S_{1}),\delta},  {T}_{n}^{(S_{2}),\delta})$. 
Then the function $d_{\delta}$ is a metric on 
trees with at least $3$ vertices.
\end{conjecture}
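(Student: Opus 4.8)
The plan is to run the proof of Theorem~\ref{thm:metric} line by line, replacing the degree-proportional rule by the affine one. As in the case $\delta=0$, everything reduces to producing, for each decorated tree $\bt$ and each seed $S$, a martingale $M^{(S)}_{\bt}(n)=\alpha^{\bt}_n\big(D_{\bt}(T^{(S),\delta}_{n})-\sum_{\btp\prec\bt}a_n(\bt,\btp)\,D_{\btp}(T^{(S),\delta}_{n})\big)$ which is bounded in $\L^2$ and whose coefficients $a_n(\bt,\btp)$ do not depend on $S$. Once these are available, the deduction of Conjecture~\ref{thm:metric2} is verbatim that of Theorem~\ref{thm:metric}: one selects a minimal (for $\preccurlyeq$) decorated tree $\bt$ with $\Es{D_{\bt}(T^{(S_1),\delta}_{n_0})}\neq\Es{D_{\bt}(T^{(S_2),\delta}_{n_0})}$ and applies the variance inequality for $\mathrm{d_{TV}}$. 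Its existence rests on the same argument, using only $|S_i|\geq 3$ together with $\delta>-1$ (which guarantees $\deg+\delta>0$ at every vertex, so the attachment is genuinely random). Observe that only \emph{upper} bounds on the moments of $D_{\bt}$ enter this scheme; non-degeneracy comes solely from $\Es{M_1(n_0)}\neq\Es{M_2(n_0)}$, so the lower bound of Corollary~\ref{cor:embed} is not required. Below we fix $S$ and abbreviate $T_n=T^{(S),\delta}_n$.

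The recurrence relation is redone first. Since $T_n$ has $n$ vertices and total degree $2n-2$, a vertex $v$ is now chosen with probability $(\deg v+\delta)/Z_n$, where $Z_n=(2+\delta)n-2\asymp(2+\delta)\,n$. Repeating the computation of Lemma~\ref{lem:espcond} with the elementary identity $(d+\delta)\,[d]_{\ell-1}=[d]_{\ell}+(\ell-1+\delta)\,[d]_{\ell-1}$ in place of $(\star\star)$ yields
\begin{equation*}
\Es{D_{\bt}(T_{n+1})\,\big|\,\mathcal{F}_n}=\Big(1+\tfrac{w(\bt)}{Z_n}\Big)D_{\bt}(T_{n})+\tfrac{1}{Z_n}\sum_{\btp\prec\bt}c_{\delta}(\bt,\btp)\,D_{\btp}(T_{n}),
\end{equation*}
so that the moments grow like $n^{w(\bt)/(2+\delta)}$. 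The one genuinely new feature is that for $\delta\neq0$ the coefficient $\ell(w)(\ell(w)-1+\delta)$ no longer vanishes at label-one leaves: at such a leaf $w$ with neighbour $a$ one picks up a term $\tfrac{\delta}{Z_n}$ times a sum which, after writing $\deg\phi(a)\cdot[\deg\phi(a)]_{\ell(a)}=[\deg\phi(a)]_{\ell(a)+1}+\ell(a)\,[\deg\phi(a)]_{\ell(a)}$, reduces to a combination of $D_{\btp}$ over the tree obtained by deleting $w$ and incrementing the label of $a$ (same weight, strictly smaller size) and over $\tau\setminus\{w\}$ (strictly smaller weight). Both obey $\btp\prec\bt$, so the recurrence still closes over $\prec$; the sole effect of these equal-weight terms is to multiply the moment bounds by powers of $\log n$, which the relation $\ll$ absorbs by definition.

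The main obstacle is the second-moment estimate of Lemma~\ref{lem:embed3}(ii), whose proof relied on the planar interpretation of $D_{\bt}$ through corners, a device tied to $\delta=0$. I would argue directly instead. Conditionally on $\mathcal{F}_n$, the quantity $\big(D_{\bt}(T_{n+1})-D_{\bt}(T_n)\big)^2$ counts pairs of decorated embeddings of $\bt$ both affected by the step, hence both meeting the modified vertex $v_n$ (here $w(\bt)>1$ is used). Overlapping the two copies at $v_n$ gives a decorated tree of weight at most $2w(\bt)$; however, the degree increase at $v_n$ replaces each factor $[\deg v_n]_{\ell}$ by its discrete derivative $\ell\,[\deg v_n]_{\ell-1}$, lowering the weight by one in \emph{each} copy, while the size-biased law $\propto\deg v_n+\delta$ of the selected vertex restores a single degree factor and produces the normalisation $1/Z_n\asymp 1/n$. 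The controlling decorated tree thus has weight at most $2w(\bt)-1$, counted with a $1/n$, so
\begin{equation*}
\Es{\big(D_{\bt}(T_{n+1})-D_{\bt}(T_n)\big)^2}\ \ll\ n^{\frac{2w(\bt)-1}{2+\delta}-1},
\end{equation*}
a gain of $\eta=\tfrac{1}{2+\delta}>0$ over the second moment $\ll n^{2w(\bt)/(2+\delta)}$; for $\delta=0$ this recovers the exponent $w(\bt)-\tfrac32$ of Lemma~\ref{lem:embed3}. The companion bound $\Es{D_{\bt}(T_n)^2}\ll n^{2w(\bt)/(2+\delta)}$ and the analogue of Lemma~\ref{lem:embed2} are purely combinatorial and transfer unchanged.

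Finally, the martingale construction of Proposition~\ref{prop:martingales} uses only the orders of magnitude of the coefficients $c_{\delta}(\bt,\btp)$ and of the ratios $\alpha^{\btp}_n/\alpha^{\bt}_n$, never their signs; it therefore goes through verbatim with $2+\delta$ in place of $2$, the equal-weight terms again contributing only logarithmic corrections compatible with~\eqref{eq:petit}. The two estimates above then give $\Es{(M_{\bt}(n+1)-M_{\bt}(n))^2}\ll n^{-1-1/(2+\delta)}$, which is summable, so each $M^{(S)}_{\bt}$ is bounded in $\L^2$ and the proof concludes as for Theorem~\ref{thm:metric}. The delicate point to verify carefully is precisely the positivity of the gain $\eta$, i.e.\ that the degree increment genuinely lowers the weight by one in every contribution to the squared increment — existing embeddings through $v_n$ as well as embeddings through the newly created leaf — since this is what keeps the rescaled martingale increments summable for all $\delta>-1$.
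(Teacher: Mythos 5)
First, a point of order: the paper does not prove this statement. It is stated explicitly as a conjecture in Section~\ref{sec:glpm}, and the authors merely remark that they ``believe that a way to prove this conjecture is to use the same observables'' as for Theorem~\ref{thm:metric}, without carrying it out. Your proposal follows exactly that suggested route, and most of it is sound: the reduction to $S$-independent $\L^2$-bounded martingales, the observation that only upper moment bounds and $\Es{M_1(n_0)}\neq\Es{M_2(n_0)}$ are needed, the replacement of $2n-2$ by $Z_n=(2+\delta)n-2$, and the direct (corner-free) treatment of the increment second moment via size-biasing at $v_n$ all check out, with the expected gain $n^{-1-1/(2+\delta)}$ summable for all $\delta>-1$.

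There is, however, a genuine gap in the recurrence. The identity $(d+\delta)[d]_{\ell-1}=[d]_{\ell}+(\ell-1+\delta)[d]_{\ell-1}$ produces, for \emph{every} vertex $w$ of $\bt$ with $\ell(w)=1$ — not only for label-one \emph{leaves} — a term $\tfrac{\delta}{Z_n}$ times the observable in which $w$ carries the label $0$. Your fix (delete $w$ and absorb the extension count into the label of its neighbour $a$) is only available when $w$ has degree $1$ in $\tau$, since deleting an internal vertex disconnects the tree; when $w$ is internal, the quantity $\sum_{\phi}\prod_{u\neq w}[\deg_T\phi(u)]_{\ell(u)}$ is not equal to $D_{\btp}$ for any decorated tree $\btp$ in the family $\mathcal{D}$ (no polynomial identity expresses $[d]_0=1$ through $[d]_j$, $j\geq1$), so the recurrence does not close over $\preccurlyeq$ as claimed. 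One can repair this by enlarging the family to allow label-$0$ internal vertices, but then the elimination of label-$0$ leaves cascades, the weight bookkeeping under $\prec$ must be redone, and the moment asymptotics of the new observables must be established separately (note that a single vertex with label $0$ has $D=n\not\ll n^{0}$, so the scaling $n^{w/(2+\delta)}$ is not automatic for zero labels). A much cleaner route is to change the observables to $D^{\delta}_{\bt}(T)=\sum_{\phi}\prod_{u}[\deg_T\phi(u)+\delta]_{\ell(u)}$: since the attachment weight is exactly $\deg+\delta$, the analogues of $(\star)$ and $(\star\star)$ hold verbatim for these shifted falling factorials, the offending coefficient becomes $\ell(w)(\ell(w)-1)$ again and vanishes at label-one vertices, and the only new terms are the type-(a) contributions $[1+\delta]_{\ell(w)}\neq0$ from leaves of arbitrary label, all of which land on trees $\prec\bt$. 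Your second-moment and martingale arguments then go through as you describe, but they should be run with these corrected observables; also note in passing that your extension-count identity should read $\big(\deg_T\psi(a)-\deg_\tau(a)+1\big)[\deg_T\psi(a)]_{\ell(a)}$ rather than $\deg_T\psi(a)\,[\deg_T\psi(a)]_{\ell(a)}$, and that the analogue of Lemma~\ref{lem:embed2} is not ``purely combinatorial'' — it requires its own (routine) recurrence.
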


We believe that a way to prove this conjecture is to use the same observables 
(namely the number of embeddings of a certain structure in the tree at step $n$) 
as those used to prove Theorem~\ref{thm:metric}. 
However we will not pursue this goal in this paper. 

A plane version of the above algorithm may also be considered.
Assume that $T^{(S),\delta}_{n}$ is a plane tree and let us describe how to construct $T^{(S),\delta}_{n+1}$. 
Choose a vertex $u \in T^{(S),\delta}_{n}$ at random as before, and then choose uniformly at random a corner $c$ among all the corners adjacent to $u$. 
Now graft the edge leading to the new vertex of $T^{(S),\delta}_{n+1}$ in $c$. 
Using this construction, $T^{(S),\delta}_{n}$ has indeed the tree structure of the $LPAM^{ \delta}$, 
and its embedding is a uniform embedding of such a tree (assuming that this is also true  for $S$). 
Other planar versions may be considered, but we choose this one for its symmetry. 

\begin{conjecture}[Scaling limit]\label{conj}  
There exists a random compact metric space $ \mathcal{L}^{(S)}_{ \delta}$ such that the convergence
$$ {n^{ - \frac{1}{2+ \delta}}} \cdot \mathsf{Loop}(T^{(S),\delta}_{n})  \quad\mathop{\longrightarrow}^{a.s.}_{n \rightarrow \infty} \quad \mathcal{L}^{(S)}_{ \delta}$$
holds almost surely for the Gromov--Hausdorff convergence.  In addition, almost surely, the Hausdorff dimension of $ \mathcal{L}^{(S)}_{ \delta}$ is $ 2+\delta$.
\end{conjecture}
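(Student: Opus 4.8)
The plan is to transpose, \emph{mutatis mutandis}, the proofs of Theorem~\ref{thm:scaling} and of Proposition~\ref{prop:hausdorff}, replacing the Brownian CRT by the self-similar random real tree arising as the limit of the $\delta$-deformed auxiliary trees. As in the case $\delta=0$ one first reduces the general seed to the planted seed $\multimap$. Indeed, the decomposition of Proposition~\ref{prop:decomp} should generalize: the tree $T^{(S),\delta}_n$ splits into independent copies of the planted process grown inside the corners of $S$, the masses being governed by a P\'olya urn with replacement $2$ whose \emph{initial} composition now reflects the affine weights $\deg+\delta$. Its normalized masses converge almost surely to a Dirichlet vector with $\delta$-dependent parameters, and $\mathcal{L}^{(S)}_\delta$ is then built by gluing independent weighted copies of the planted limit exactly as in Section~\ref{sec:genseed}, so the general-seed statement follows from the planted one.

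For the seed $\multimap$ the crux is to generalize the coupling of Proposition~\ref{prop:coupling}. First I would look for a Markov-branching growth algorithm producing a sequence of labeled trees $(\mathbf{B}^\delta_n)_{n\ge1}$ — of Ford type (binary) when $-1<\delta\le 0$ and of Marchal type (with higher-arity branch points) when $\delta>0$ — in which an edge or branch point is selected with a weight tuned so that the resulting loop of $\mathsf{Glu}(\mathbf{B}^\delta_n)$ is chosen proportionally to its length \emph{plus} $\delta$; this is precisely the affine rule of the LPAM$^\delta$. The argument of Proposition~\ref{prop:coupling} would then give the joint identity $(\mathsf{Loop}(T^{\multimap,\delta}_n))_{n}\overset{(d)}{=}(\mathsf{Glu}(\mathbf{B}^\delta_n))_{n}$. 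Since $(\mathbf{B}^\delta_n)$ is Markov branching, with a split distribution read off from the $\delta$-urn, the general scaling theory of Markov branching trees (Haas--Miermont) should yield $n^{-\gamma}\cdot B^\delta_n\to\mathcal{T}^{(\gamma)}$ almost surely in the pointed Gromov--Hausdorff sense, where $\mathcal{T}^{(\gamma)}$ is a self-similar fragmentation tree of index $-\gamma$ with $\gamma=1/(2+\delta)$. The continuity of the gluing map (Proposition~\ref{prop:lipschitz} and Lemma~\ref{lem:gluecv}, neither of which involves $\delta$) then transfers this to $n^{-\gamma}\cdot\mathsf{Glu}(\mathbf{B}^\delta_n)\to\mathsf{Glu}\big(\mathcal{T}^{(\gamma)};(X_i)_{i\ge0}\big)=:\mathcal{L}_\delta$, just as in the proof of Theorem~\ref{thm:scaling}; the one structural point to revisit is Lemma~\ref{lem:delta}, which must be reproved without the binary assumption since $\mathcal{T}^{(\gamma)}$ will have branch points of high (possibly infinite) degree when $\delta\neq0$.

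For the Hausdorff dimension I would follow the proof of Proposition~\ref{prop:hausdorff}. The upper bound $\dim\mathcal{L}_\delta\le 2+\delta$ is immediate from the fact that the projection $\pi$ is $1$-Lipschitz together with $\dim\mathcal{T}^{(\gamma)}=1/\gamma=2+\delta$. For the lower bound I would rerun the nested-subtree construction of Propositions~\ref{prop:arbreemboites} and~\ref{prop:bi}, with Aldous' three-part decomposition replaced by the recursive self-similar decomposition of $\mathcal{T}^{(\gamma)}$. This again makes $(-\log\mu(\mathcal{T}^i))_{i\ge1}$ a random walk, now with some drift $c=c(\delta)>0$, while self-similarity gives $\mathcal{X}_i/\mu(\mathcal{T}^i)^{\gamma}\overset{(d)}{=}\mathcal{X}_1$. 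The geometric estimate $\Delta(\pi(x),\pi(Y))\ge\mathcal{X}_i$ of Proposition~\ref{prop:bi} is purely combinatorial and survives unchanged, so that $\nu(B_{\mathcal{X}_i}(\pi(Y)))\le\mu(\mathcal{T}^i)$. Since $\log\mu(\mathcal{T}^i)\sim-ci$ and $\log\mathcal{X}_i\sim-\gamma c\,i$, the unknown constant $c$ cancels in the ratio and one gets a local exponent $1/\gamma=2+\delta$; a Borel--Cantelli argument identical to that of Lemma~\ref{lem:borneinfDH}, fed by tail bounds on $\mathcal{X}_1$ coming from the fragmentation split law, then yields $\dim\mathcal{L}_\delta\ge 2+\delta$ almost surely.

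The main obstacle is the identification of the scaling limit of the auxiliary trees, i.e.\ proving that $n^{-\gamma}B^\delta_n$ converges to a self-similar fragmentation tree with \emph{precisely} the index $\gamma=1/(2+\delta)$, since it is this value that simultaneously fixes the normalization $n^{-1/(2+\delta)}$ and the dimension $2+\delta$. Concretely one must compute the dislocation measure of the limiting fragmentation from the $\delta$-urn split distribution and check the regularity and ``coming down from infinity'' hypotheses of the Markov-branching scaling theorem. A second, related difficulty is that for $\delta>0$ the limit is no longer binary — one expects stable-tree-like branch points of infinite degree — so the loop structure underlying the gluing and the nesting of Proposition~\ref{prop:bi} must be re-examined in the non-binary regime; controlling the countably many loops clustering at a single high-degree branch point, and ruling out spurious identifications in the spirit of Lemma~\ref{lem:delta}, is where genuinely new estimates, rather than a routine adaptation, will be required.
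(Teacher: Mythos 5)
The statement you are proving is stated in the paper as a \emph{conjecture}: the authors give no proof of it, only the heuristic you are elaborating (the coupling of $\mathsf{Loop}(T^{\multimap,\delta}_{n})$ with $\mathsf{Glu}(\tilde{\mathbf{F}}_{n})$ for the modified Ford algorithm with $\alpha=1/(2+\delta)$, together with the known scaling limit of the \emph{unmodified} Ford trees), and they explicitly defer the proof to future work. So there is no proof in the paper to compare against, and your text is a research programme rather than a proof. Its central gap is exactly the one the authors flag and that you acknowledge without closing: what is needed is the almost sure scaling limit of the \emph{modified} Ford trees $\tilde{\mathbf{F}}_{n}$, which for $\alpha\neq 1/2$ differ in law from Ford's trees. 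Because the second (uniform) edge choice depends on the decomposition of the tree into the geodesics from $A_{i}$ to $\mathsf{Span}(\tilde{F}_{n};A_{0},\dots,A_{i-1})$, i.e.\ on the labelling history, it is not clear that $(\tilde{\mathbf{F}}_{n})$ is Markov branching, so the Haas--Miermont theorem cannot be invoked off the shelf, and in any case that theorem does not directly give almost sure Gromov--Hausdorff convergence of the labelled trees in the form needed to imitate the proof of Theorem~\ref{thm:scaling}. Note also that the paper's auxiliary trees remain \emph{binary} for every $\delta>-1$ (Ford's splitting always inserts a degree-$3$ vertex); your proposal to switch to Marchal-type trees with higher-arity branch points for $\delta>0$ departs from the paper's route and would require a new coupling in the spirit of Proposition~\ref{prop:coupling}, which you do not supply.

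A second concrete gap is the reduction to the seed $\multimap$. The decomposition of Proposition~\ref{prop:decomp} rests on the fact that, for $\delta=0$, choosing a vertex proportionally to its degree and then a uniform corner of that vertex is the same as choosing a uniform corner of the whole tree; this is what makes the subtrees grown in the corners of $S$ conditionally independent planted LPAMs governed by a P\'olya urn. For $\delta\neq 0$ the weight $\deg(u)+\delta$ is attached to the \emph{vertex} and does not split additively over its corners, so the subtrees emanating from two corners of $S$ adjacent to the same seed vertex are not independent and are not distributed as planted LPAM$^{\delta}$'s; the urn decomposition, and hence your Dirichlet-limit gluing description of $\mathcal{L}^{(S)}_{\delta}$, does not go through as stated. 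The dimension argument is the most routine part of your plan, but it too is conditional on first identifying the limit as a self-similar tree of index $1/(2+\delta)$ with an explicit recursive decomposition replacing Aldous' three-part split, which is again the unproven core of the matter.
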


As seen previously, it is natural to scale $\mathsf{Loop}(T^{(S),\delta}_{n})$ by a factor $ n^{-1/(2+ \delta)}$, since the large degrees of $T^{(S),\delta}_{n}$ are of order $ n^{1/(2+ \delta)}$. We now give some arguments to support Conjecture~\ref{conj}. 
To simplify, as in the introduction, treat only the case $S= \multimap$. 

It may be shown that the plane $LPAM^{\delta}$ is closely related to a modification of Ford's algorithm with parameter $ \alpha=1/(2+ \delta)$. 
Ford's algorithm is a means to grow recursively a sequence of binary trees that generalizes Rémy's algorithm. 
For references see \cite{For05}. \medskip 

\textsc{Ford's algorithm: }
Fix a parameter $ \alpha \in [0,1]$. We will construct a random sequence of labeled binary trees $(\mathbf{F}_{n})_{n \geq 1}$.
Start with $\mathbf{F}_{1}$ being a binary tree with two leaves labeled $A_{0}$ and $A_{1}$. 
For $n \geq 1$, given $\mathbf{F}_{n}$, to obtain $\mathbf{F}_{n+1}$ 
we assign a weight $1- \alpha$ to each of the $n$ edges of $\mathbf{F}_{n}$ adjacent to a leaf 
and a weight $ \alpha$ to each of the $n-1$  other edges; 
then we select at random an edge $e$ proportionally to its weight and split it as in Rémy's algorithm. 
That is we place a middle vertex on $e$, to which we attach a new edge carrying a new leaf denoted by $A_{n+1}$.

\textsc{Ford's modified algorithm:} 
We consider now the following modification of Ford's algorithm, which we denote $( \tilde{ \mathbf{F}}_{n})_{n\geq 1}$. 
We proceed exactly as in Ford's algorithm except that, once the edge $e$ has been selected at step $n$, 
we first find the unique $i \in \{1,2,\ldots , n\}$ such that $e$ belongs to the geodesic joining the leaf $A_{i}$ 
to the set $ \mathsf{Span}({\tilde{F}}_{n} ; A_{0},A_{1}, \ldots, A_{i-1})$, 
then we choose a \emph{new} edge $f$ \emph{uniformly at random} on this geodesic,  
split it as in Rémy's algorithm and attach the new leaf $A_{n+1}$ to it.  
\medskip

Observe that in the case $\alpha=1/2$ both Ford's algorithm and its modified version have the same distribution as Rémy's algorithm. 
The analog of Proposition~\ref{prop:coupling} is this case is the following: 
For $ \alpha=1/(2+ \delta)$, we have the following joint equality in distribution
$$ ( \mathsf{Loop}( T^\delta_{n}) ; {n \geq 1})  \quad\mathop{=}^{(d)} \quad (\mathsf{Glu}( \tilde{\mathbf{F}}_{n}) ; {n \geq 1}).$$ 
This follows from the fact that choosing the first edge in Ford's modified algorithm amounts to choosing a vertex of $ T^\delta_{n}$ according to the LPAM$^\delta$ rule, and choosing the second edge amounts to choosing a corner of this vertex uniformly at random. We leave details to the reader. 
We also mention that the original Ford algorithm also corresponds to a plane $LPAM^{\delta}$, 
but in which corners do not play exchangeable roles
(the first corner around each vertex has weight $1-\alpha$ and all others weight $\alpha$). 
\medskip 

An analog of~\eqref{eq:cvpsremy} is known for Ford's algorithm. 
The sequence of random rescaled label trees $ n^{- \alpha}  \cdot \mathbf{F}_{n}$ 
converges almost surely towards a random compact labeled self-similar $ \R$-tree of Hausdorff dimension $1/ \alpha$ 
(belonging to the family of so-called fragmentation trees). See \cite{HM12} for details.  
A way to prove Conjecture~\ref{conj} would be to first prove analog convergences for the trees $ \tilde{ \mathbf{F}}_{n}$ arising from Ford's modified algorithm. We hope to exploit these connections in a future work.


\subsection{Connections with the Poisson boundary}

Finally, we connect the concept of the influence of the seed with the notion of the Poisson boundary of a transient Markov chain, which captures the information contained in its tail $ \sigma$-field. Consider a Markov chain $X$ on a countable state space $V$. Assume that we may write $V = V_{0} \sqcup V_{1} \sqcup V_{2} \sqcup \cdots$ in such a way that the transitions from $V_{i}$ always belong to $V_{i+1}$ for $i \geq 0$. We call this the ``layer" condition, and call $V_{i}$ a layer. In our case, $V_{n}$ is just the set of all looptrees associated with trees with $n$ vertices. In particular, this Markov chain is transient. For $x \in V_{i}$, we denote by $(X_{n}^{(x)})_{n \geq i}$ the Markov chain started from $x$. In particular, $X_{n}^{(x)} \in V_{n}$ for every $n \geq i$. For every starting points $x,y \in V$, we define the asymptotic total variation:
$$ d(x,y) := \lim_{n \to \infty} \mathrm{d_{TV}}( X_{n}^{(x)}, X_{n}^{(y)}).$$ We shall give an alternative expression for the pseudo-distance $d$ by using the Poisson boundary of the chain. The Poisson boundary of $X$ is a measurable space $ \mathcal{P} = ( E, \mathcal{A})$, which is also endowed with a family of probability measures $(\nu_{x})_{x \in V}$ such that any bounded harmonic function $h$ on $V$ can be represented as 
\begin{equation}
\label{eq:poisson}h(x) = \int \mathrm{d}\nu_{x}(\xi) \mathbf{h}(\xi)
\end{equation} where $ \mathbf{h} : E \to  \mathbb{R}$ is a bounded measurable function on $E$. The measures $ \nu_{x}$ can be interpreted as the harmonic measures on $E$ seen from $x$. 
The most classical way to construct the Poisson boundary is via the construction of the Martin boundary of the chain, we refer to \cite[Chap.~4]{Woe00} for details. We also mention that the Poisson boundary captures the information contained in the tail $ \sigma$-field of $X$. Indeed, there is a one-to-one correspondence between bounded harmonic functions $h$ and equivalence classes of bounded random variables $Z$ measurable with respect to the tail $ \sigma$-field of $X$ which is given by the formula $h(x)= \mathbb{E}_{x}[Z]$ for $x \in V$. In our setting, we have

\begin{proposition}\label{prop:dtv}For every $x,y \in V$, we have $d(x,y) = \mathrm{d_{TV}}( \nu_{x}, \nu_{y})$.
\end{proposition}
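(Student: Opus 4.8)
The plan is to transport the whole problem onto a common trajectory space and to recognise the limit defining $d(x,y)$ as a total variation distance computed on the tail $\sigma$-field, which in this setting is exactly the Poisson boundary. Fix $x\in V_i$, $y\in V_j$ and set $N=\max(i,j)$. I would work on the canonical trajectory space $\Omega=\prod_{n\geq N}V_n$ with coordinate maps $(X_n)_{n\geq N}$, equipped with the two laws $\mathbb{P}_x,\mathbb{P}_y$ obtained by running the chain with the \emph{common} transition kernels but with time-$N$ marginals $X_N^{(x)}$ and $X_N^{(y)}$. Introduce the decreasing sequence $\mathcal{G}_n=\sigma(X_m:m\geq n)$, whose intersection $\mathcal{G}_\infty=\bigcap_n\mathcal{G}_n$ is the tail $\sigma$-field. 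Since restricting two probability measures to a smaller $\sigma$-field can only decrease their total variation distance, $\dTV(\mathbb{P}_x|_{\mathcal{G}_n},\mathbb{P}_y|_{\mathcal{G}_n})$ is non-increasing in $n$, which will simultaneously yield the existence of the limit in the definition of $d$.

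The first key step is the identity
$$\dTV\big(\mathbb{P}_x|_{\mathcal{G}_n},\mathbb{P}_y|_{\mathcal{G}_n}\big)=\dTV\big(X_n^{(x)},X_n^{(y)}\big),\qquad n\geq N.$$
The inequality $\geq$ is immediate because $X_n$ is $\mathcal{G}_n$-measurable, so pushing both measures forward by the contraction $X_n$ gives the marginals $X_n^{(x)},X_n^{(y)}$. For $\leq$, the Markov property shows that under both $\mathbb{P}_x$ and $\mathbb{P}_y$ the conditional law of $(X_n,X_{n+1},\dots)$ given $X_n=z$ is one and the same kernel $K(z,\cdot)$; hence $\mathbb{P}_x|_{\mathcal{G}_n}=\int K(z,\cdot)\,dX_n^{(x)}(z)$ and likewise for $y$, and for every $A\in\mathcal{G}_n$ one has
$$\mathbb{P}_x(A)-\mathbb{P}_y(A)=\int K(z,A)\,d\big(X_n^{(x)}-X_n^{(y)}\big)(z)\leq \dTV\big(X_n^{(x)},X_n^{(y)}\big),$$
using $0\leq K(z,A)\leq1$. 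I expect this step to rely only on the Markov property and on $X_n$ being a coordinate of the future, so it should be routine.

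Granting this identity, the second step is a reverse martingale argument along the decreasing filtration $(\mathcal{G}_n)$. Writing $R=(\mathbb{P}_x+\mathbb{P}_y)/2$ and $f_n=\mathbb{E}_R[\,d\mathbb{P}_x/dR\mid\mathcal{G}_n]$, one checks that $\dTV(\mathbb{P}_x|_{\mathcal{G}_n},\mathbb{P}_y|_{\mathcal{G}_n})=\int|f_n-1|\,dR$ and that $(f_n)$ is a reverse martingale converging in $L^1(R)$ to $\mathbb{E}_R[\,d\mathbb{P}_x/dR\mid\mathcal{G}_\infty]$. Letting $n\to\infty$ and combining with the first step gives
$$d(x,y)=\lim_{n\to\infty}\dTV\big(X_n^{(x)},X_n^{(y)}\big)=\dTV\big(\mathbb{P}_x|_{\mathcal{G}_\infty},\mathbb{P}_y|_{\mathcal{G}_\infty}\big).$$

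It remains to identify the right-hand side with $\dTV(\nu_x,\nu_y)$, and I expect this last identification to be the main obstacle, since it is the only genuinely measure-theoretic point. By the construction of the Poisson boundary recalled above and the correspondence $h(x)=\mathbb{E}_x[Z]$ between bounded harmonic functions and bounded tail-measurable variables (see \cite[Chap.~4]{Woe00}), the tail $\sigma$-field $\mathcal{G}_\infty$ is, modulo null sets, generated by a measurable boundary map $\mathbf{bnd}:\Omega\to E$ with $\nu_x=\mathbf{bnd}_*\mathbb{P}_x$. The delicate part will be to justify that testing tail events against this single map is legitimate for the two distinct measures $\mathbb{P}_x$ and $\mathbb{P}_y$ \emph{simultaneously}, i.e.\ that the $\mathbb{P}_x$- and $\mathbb{P}_y$-completions of $\mathcal{G}_\infty$ both agree with $\sigma(\mathbf{bnd})$; once this is in place one reads off $\dTV(\mathbb{P}_x|_{\mathcal{G}_\infty},\mathbb{P}_y|_{\mathcal{G}_\infty})=\dTV(\nu_x,\nu_y)$, completing the proof.
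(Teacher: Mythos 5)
Your argument is essentially correct but follows a genuinely different route from the paper. You work on the trajectory space and identify $\lim_n \dTV(X_n^{(x)},X_n^{(y)})$ with the total variation distance of $\mathbb{P}_x$ and $\mathbb{P}_y$ restricted to the tail $\sigma$-field, via the two-sided inequality $\dTV(\mathbb{P}_x|_{\mathcal{G}_n},\mathbb{P}_y|_{\mathcal{G}_n})=\dTV(X_n^{(x)},X_n^{(y)})$ (which is correct: the layer condition makes $X_n$ a sufficient statistic for $\mathcal{G}_n$) followed by a reverse-martingale passage to $\mathcal{G}_\infty$; this has the pleasant by-product of proving monotonicity, hence existence of the limit defining $d$. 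The paper instead stays entirely at the level of harmonic functions: it shows that $\dTV(X_n^{(x)},X_n^{(y)})=\tfrac12\sup_h|h(x)-h(y)|$ over functions harmonic on the first $n$ layers with $\|h\|_\infty\leq 1$ (the extreme points of this convex set being the functions $h_A(x)=\int \mathrm{d}\nu_x^{(n)}(\xi)\,(\mathbf{1}_A-\mathbf{1}_{A^c})(\xi)$), lets $n\to\infty$ to get $d(x,y)=\tfrac12\sup_h|h(x)-h(y)|$ over all bounded harmonic functions, and then applies the Poisson representation $h(x)=\int \mathbf{h}\,\mathrm{d}\nu_x$ to convert this supremum into $\dTV(\nu_x,\nu_y)$. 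The practical difference shows up exactly at the point you flag as the ``main obstacle'': your route needs a single boundary map $\mathbf{bnd}$ such that the tail $\sigma$-field agrees with $\sigma(\mathbf{bnd})$ modulo null sets of $\mathbb{P}_x$ and $\mathbb{P}_y$ \emph{simultaneously}. This is available (take the Martin-boundary construction with a reference initial law charging both $x$ and $y$, or equivalently work modulo $R$-null sets with $R=(\mathbb{P}_x+\mathbb{P}_y)/2$, noting $\mathbb{P}_x,\mathbb{P}_y\ll R$), but it genuinely requires more than the bare defining property \eqref{eq:poisson} of the Poisson boundary that the paper invokes; the paper's dual formulation through harmonic functions is designed precisely to bypass this measure-theoretic point, at the cost of the slightly less transparent extreme-point argument for \eqref{eq:harmo}. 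If you complete the last identification along the lines just indicated, your proof is a valid and arguably more probabilistic alternative.
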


\proof  We first express $d(x,y)$  in terms of harmonic functions.  If $x,y \in V_{1} \cup V_{2} \cup \ldots \cup V_{i}$  we claim  that for $n \geq i$ we have
  \begin{eqnarray} \label{eq:harmo} & & \mathrm{d_{TV}}(X_{n}^{(x)}, X_{n}^{(y)}) =  \frac{1}{2}\sup_{h \in \mathcal{H}_{n}} |h(x)-h(y)|, \end{eqnarray}
 where $\mathcal{H}_{n} = \left\{ h : \bigsqcup_{i=1}^{n} V_{i} \to \mathbb{R}, \mbox{\ harmonic on }V_{0} \sqcup \cdots \sqcup V_{n-1} \mbox{ and } \|h\|_{\infty} \leq 1 \right\}$.
To establish this equality, remark that if we denote by $\nu_{x}^{(n)}$ the law of the first hitting point of $V_{n}$ by the chain starting from $x$ (which is also unique visited point in $V_n$ by our layer condition). Then observe that by classical potential theory, for every set $A \subset V_{n}$ we have  \begin{eqnarray*} \left|\mathbb{P}(X_{n}^{(x)} \in A) - \mathbb{P}(X_{n}^{(y)} \in A) \right| &=& \left| \int_{V_{n}} \mathrm{d}\nu^{(n)}_{x}(\xi) \mathbf{1}_{A}(\xi)-\int_{V_{n}} \mathrm{d}\nu^{(n)}_{y}(\xi) \mathbf{1}_{A}(\xi)\right|\\  &=& \frac{1}{2} \left|\int_{V_{n}} \mathrm{d}\nu^{(n)}_{x}(\xi) \left(\mathbf{1}_{A}(\xi)- \mathbf{1}_{A^c}(\xi)\right)-\int_{V_{n}} \mathrm{d}\nu^{(n)}_{y}(\xi) \left(\mathbf{1}_{A}(\xi)- \mathbf{1}_{A^c}(\xi)\right) \right| .  \end{eqnarray*}
It is plain to see that functions $h_{A} : x \mapsto \int_{V_{n}} \mathrm{d}\nu_{x}^{(n)}(\xi) (\mathbf{1}_{A}(\xi)- \mathbf{1}_{A^c}(\xi))$ are the extreme points of the convex set $ \mathcal{H}_{n}$. Hence, by convexity of $h \mapsto |h(x)-h(y)|$, we get that
$$\mathrm{d_{TV}}( X_{n}^{(x)}, X_{n}^{(y)})= \frac{1}{2} \sup_{A \subset V_{n}} \left|h_{A}(x)-h_{A}(x)\right|=\frac{1}{2}\sup_{h \in \mathcal{H}_{n}} |h(x)-h(y)|.$$
This establishes~\eqref{eq:harmo}.

By taking the limit $n \to \infty$, we get then get that \begin{equation}
\label{eq:eg}d(x,y) =  \frac{1}{2}\sup\big\{ |h(x)-h(y)|  \big\},
\end{equation}where the supremum runs over all harmonic functions $h$ on $V$ whose $\| \cdot \|_{\infty}$ norm is bounded by one. Using the Poisson representation of bounded harmonic functions~\eqref{eq:poisson}, it is a simple matter to check that the supremum on the right-hand side of~\eqref{eq:eg} is actually equal to 
 \begin{eqnarray*} \sup_{  \begin{subarray}{c}\mathbf{h}: E \to \mathbb{R} \\ \mathrm{Borel, \ } \| \mathbf{h}\|_{\infty} \leq 1 \end{subarray}} \left|\int \mathrm{d}\nu_{x}(\xi) \mathbf{h}(\xi) - \int \mathrm{d}\nu_{y}(\xi) \mathbf{h}(\xi) \right| &=& 2\sup_{ A \subset E, \mathrm{\ Borel}} \left|\int \mathrm{d}\nu_{x}(\xi)  \mathbf{1}_{A}(\xi) - \int \mathrm{d}\nu_{y}(\xi)  \mathbf{1}_{A}(\xi) \right|\\ & =& 2\sup_{A \subset E, \mathrm{\ Borel}}|\nu_{x}(A)-\nu_{y}(A)| = 2\mathrm{d_{TV}}(\nu_{x}, \nu_{y}). \end{eqnarray*}
 This completes the proof.
 \endproof 
 
In view of Proposition~\ref{prop:dtv}, a natural open question raised by our work is the following.
\begin{open} Is the measured space of scaling limits of discrete looptrees isomorphic to the Poisson boundary of the chain of planar preferential attachment trees thus implying \eqref{eq:conjintro}? Or, equivalently, are all asymptotic events of the chain measurable with respect to the scaling limit $ \mathcal{L}$?\end{open}

In particular, we believe that for every decorated tree $ \bt$, the limiting value of the martingale $M^{(S)}_{\bt}(n)$ used to prove Theorem~\ref{thm:metric} is a measurable function of $ \mathcal{L}^{(S)}$. 

\vspace{-0.1cm}
\bibliographystyle{siam}

\end{document}